\NeedsTeXFormat{LaTeX2e}
\documentclass[a4paper,12pt,reqno]{amsart}
\usepackage{fullpage}
\usepackage{amssymb}
\usepackage{amsmath}
\usepackage{mathtools}
\usepackage{enumitem}
\usepackage{mathrsfs}
\usepackage[all]{xy}
\setcounter{tocdepth}{1}
\usepackage{mathtools}
\usepackage{hyperref}
\usepackage{url}
\usepackage{bbm}
\usepackage{longtable}

\usepackage{lmodern}
\usepackage[OT2, T1]{fontenc}

\DeclareSymbolFont{cyrletters}{OT2}{wncyr}{m}{n}
\usepackage[british]{babel}

\usepackage[margin=1.3in]{geometry}
\setlength{\belowcaptionskip}{-0.3em}

\numberwithin{equation}{section} \numberwithin{figure}{section}

\makeatletter
\@namedef{subjclassname@2020}{\textup{2020} Mathematics Subject Classification}
\makeatother

\DeclareMathOperator{\Gal}{Gal} 
\DeclareMathOperator{\Ker}{Ker}
\DeclareMathOperator{\Aut}{Aut}

\DeclareMathOperator{\Hom}{Hom} \DeclareMathOperator{\re}{Re}
\DeclareMathOperator{\im}{im}   
\DeclareMathOperator{\vol}{vol}

\DeclareMathOperator{\ord}{ord}

\DeclareMathOperator{\Res}{Res} \DeclareMathOperator{\Norm}{N}

\DeclareMathOperator{\Frob}{Frob}
\DeclareMathOperator{\HH}{H}
\DeclareMathOperator{\id}{id}

\let\Im\relax
\DeclareMathOperator{\Im}{Im} 
\DeclareMathOperator{\Cl}{Cl} 

\DeclareMathSymbol{\Sha}{\mathalpha}{cyrletters}{"58}

\renewcommand{\O}{\mathcal{O}}
\newcommand{\OO}{\mathcal{O}}

\newcommand{\dual}[1]{{#1}^{\wedge}}

\newcommand{\one}{\mathbbm{1}}

\newcommand\F{\mathbb{F}}

\newcommand\Z{\mathbb{Z}}
\newcommand\ZZ{\mathbb{Z}}
\newcommand\N{\mathbb{N}}
\newcommand\Q{\mathbb{Q}}
\newcommand\R{\mathbb{R}}
\newcommand\C{\mathbb{C}}
\newcommand\CC{\mathbb{C}}

\newcommand{\Adele}{\mathbf{A}}

\newcommand{\bbQ}{{\mathbb Q}}

\newcommand{\bbZ}{{\mathbb Z}}

\newcommand{\cA}{{\mathcal A}}

\newcommand{\cO}{{\mathcal O}}

\newcommand{\fp}{\mathfrak{p}}
\newcommand{\fq}{\mathfrak{q}}
\newcommand{\gextk}{\ensuremath{G\text{-ext}(k)}}

\newcommand{\Val}{\Omega}

\newtheorem{lemma}{Lemma}

\newtheorem{theorem}[lemma]{Theorem}
\newtheorem{proposition}[lemma]{Proposition}

\theoremstyle{definition}
\newtheorem{example}[lemma]{Example}

\newtheorem{definition}[lemma]{Definition}
\newtheorem{remark}[lemma]{Remark}

\newtheorem*{ack}{Acknowledgements}

\usepackage[usenames,dvipsnames]{color}

\numberwithin{lemma}{section}

\begin{document}

\title{Distribution of genus numbers of abelian number fields}

\author{\sc Christopher Frei}
\address{Christopher Frei\\
TU Graz\\
Institute of Analysis and Number Theory\\
Steyrergasse 30/II\\
8010 Graz\\
Austria.}
\email{frei@math.tugraz.at}
\urladdr{https://www.math.tugraz.at/~frei/}

\author{Daniel Loughran}
  \address{Daniel Loughran \\
	Department of Mathematical Sciences\\
	University of Bath\\
	Claverton Down\\
	Bath\\
	BA2 7AY\\
	UK}
\urladdr{https://sites.google.com/site/danielloughran/}

\author{\sc Rachel Newton}
\address{Rachel Newton\\
Department of Mathematics\\ 
King's College London\\
Strand\\ 
London\\
WC2R 2LS\\
UK.}
   \email{rachel.newton@kcl.ac.uk}
\urladdr{https://racheldominica.wordpress.com/}

\subjclass[2020]
{11R37 (primary), 
11R45, 
43A70, 
11R29 
(secondary)}

\begin{abstract}
We study the quantitative behaviour of genus numbers of abelian extensions of number fields with given Galois group. We prove an asymptotic formula for the average value of the genus number and show that any given genus number appears only $0\%$ of the time.
\end{abstract}

\maketitle

\thispagestyle{empty}

\tableofcontents

\section{Introduction}

Let $k$ be a number field. The Cohen--Lenstra heuristics give a prediction for
the distribution of the odd part of the class groups of quadratic extensions of $k$ \cite{CL84}. These are  known for the $3$-torsion in the class group \cite{DH71, DW88}, 
but are wide open in general. 
On the other hand, the $2$-torsion in the class group of a
quadratic extension admits a very simple description via Gauss's genus
theory. It is this part of the class group which we shall focus on in this paper,
in the more general setting of genus numbers of abelian extensions. There has
been a recent interest in statistics regarding the genus group for other classes of field
extensions, for example for cubic and quintic extensions \cite{MT16, MTT20},
with the only previous cases considered in the abelian setting being cyclic extensions
of $\Q$ of prime degree \cite{Kim20}. The definition of the genus group for an abelian extension
is as follows.

\begin{definition} \label{def:genus}
	Let $K/k$ be an abelian extension.
	The \emph{genus field} of $K/k$ is the largest
	extension $\mathfrak{G}_{K/k}$ of $K$ which is unramified at all places of $K$
	and such that $\mathfrak{G}_{K/k}$ is an abelian extension of $k$.
	The \emph{genus group} of $K/k$  is the Galois group $\Gal(\mathfrak{G}_{K/k} /K)$.
	The \emph{genus number} $\mathfrak{g}_{K/k}$ of $K/k$ is the size of the genus group.
      \end{definition}

At archimedean places, we use the convention that $\C/\R$ is  ramified.

By class field theory, there is a tower of fields $K \subset\mathfrak{G}_{K/k}
\subset H_K$. Here $H_K$ is the Hilbert class field of $K$ and the class group
$\Cl(K)$ of $K$ is canonically isomorphic to $\Gal(H_K/K)$. The subgroup
$\Gal(H_K/\mathfrak{G}_{K/k}) \subset \Cl(K)$ is called the \emph{principal
  genus} of $K/k$  and the genus group is the quotient
$\Cl(K)/\Gal(H_K/\mathfrak{G}_{K/k})$ of the class group. Our main theorem
concerns the average size of the genus number as one sums over all abelian
extensions of bounded conductor with given Galois group. Write $\Phi(K/k)$ for
the absolute norm of the conductor of $K$ (viewed as an ideal in $k$).
\begin{theorem} \label{thm:genus}
	Let $G$ be a non-trivial finite abelian group. Then
	$$\sum_{\substack{\Gal(K/k) \cong G \\ \Phi(K/k) \leq B}} \mathfrak{g}_{K/k} \sim c B (\log B)^{\varrho(k,G)-1},$$
	for some positive constant $c$, where
        \begin{equation}\label{eq:def_varrho_k_G}
	\varrho(k,G) = \sum_{g\in G\smallsetminus\{\id_G\}}\frac{\ord g}{[k(\mu_{\ord g}): k]}.        
      \end{equation}
\end{theorem}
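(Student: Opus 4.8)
The plan is to express the genus number $\mathfrak{g}_{K/k}$ explicitly in terms of local data of the abelian extension $K/k$, and then to carry out the sum over $K$ using a Tauberian theorem applied to a Dirichlet series that factors as an Euler product over the places of $k$. The first step is the arithmetic heart: by genus theory, the genus field $\mathfrak{G}_{K/k}$ is the compositum of $K$ with the maximal subextension of the ray class field of $k$ modulo the conductor $\mathfrak{f}(K/k)$ that is linearly disjoint from $K$; equivalently, one has a formula of the shape
\begin{equation*}
\mathfrak{g}_{K/k} \;=\; \frac{h_k \cdot \prod_{v} e_v(K/k)}{[K:k]\,[\mathcal{O}_k^\times : N(\text{local units})]}
\end{equation*}
where the product runs over all places $v$ of $k$ and $e_v$ is the ramification index of $v$ in $K/k$, with the usual convention at archimedean places. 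Up to the $k$-dependent but $K$-independent factor $h_k$ and the unit index (which is bounded and only affects the constant $c$), the genus number is essentially multiplicative: $\mathfrak{g}_{K/k}$ is, up to bounded factors, $\prod_{\mathfrak{p}} e_{\mathfrak{p}}(K/k)$.

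Next I would set up the generating Dirichlet series $\sum_{\Gal(K/k)\cong G} \mathfrak{g}_{K/k}\, \Phi(K/k)^{-s}$. Parametrising abelian $G$-extensions of $k$ by homomorphisms $\mathrm{Gal}(\overline k/k)\twoheadrightarrow G$ (via class field theory, or directly), the conductor $\Phi(K/k)$ is multiplicative over primes $\mathfrak{p}$ of $k$, and so is the weight $\prod_{\mathfrak p} e_{\mathfrak p}$; hence the Dirichlet series admits an Euler product $\prod_{\mathfrak p} E_{\mathfrak p}(s)$, where $E_{\mathfrak p}(s)$ is a finite sum over the possible local behaviours at $\mathfrak p$ of the local conductor exponent times $N\mathfrak p^{-s(\text{that exponent})}$ times the local ramification index. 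For $\mathfrak p$ of good behaviour the dominant term beyond the trivial one comes from extensions where $\mathfrak p$ is tamely ramified with some local index $d \mid |G|$: such a contribution is $\approx (\#\{g \in G : \ord g = d\}) \cdot d \cdot N\mathfrak p^{-s}$ (the conductor exponent being $1$ in the tame case, weighted by the ramification index $d$), and summing the counts $d\cdot\#\{g:\ord g = d\}$ over all $d$, but keeping track of which primes $\mathfrak p$ can actually be tamely ramified with index $d$ — namely those that split appropriately in $k(\mu_d)$, a condition of Chebotarev density $1/[k(\mu_d):k]$ — yields exactly $E_{\mathfrak p}(s) = 1 + \varrho(k,G)\,N\mathfrak p^{-s} + O(N\mathfrak p^{-2\sigma + \varepsilon})$ on average. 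Comparing with $\zeta_k(s)^{\varrho(k,G)}$ shows the Dirichlet series equals $\zeta_k(s)^{\varrho(k,G)} \cdot H(s)$ with $H$ holomorphic and non-zero in a neighbourhood of $\mathrm{Re}(s) = 1$.

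Finally I would apply a Tauberian theorem (e.g.\ the Selberg--Delange method, or the version of Landau's theorem for Dirichlet series with a real-power singularity) to the Euler product, which is legitimate once the requisite analytic continuation slightly past $\mathrm{Re}(s)=1$ and polynomial growth on vertical lines is established; this gives the main term $cB(\log B)^{\varrho(k,G)-1}$, with $c>0$ because every local factor contributes positively and $H(1)>0$. The main obstacle I anticipate is twofold: first, proving the clean genus-number formula above with adequate uniformity (controlling the unit-index and wild-ramification contributions so that they genuinely only perturb the constant and do not affect the power of $\log B$), and second, establishing enough analytic continuation of the Euler product $\prod_{\mathfrak p} E_{\mathfrak p}(s)$ — since $\varrho(k,G)$ is generally not an integer, one cannot simply compare to a product of shifted zeta functions but must instead carefully factor out $\zeta_k(s)^{\varrho(k,G)}$ and verify that the remaining product converges in a suitable half-plane, which requires a precise second-order analysis of $E_{\mathfrak p}(s)$ incorporating the Chebotarev conditions governing which primes admit which local ramification indices.
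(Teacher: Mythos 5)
Your outline correctly identifies Furuta's adelic genus-number formula as the starting point and correctly anticipates the frobenian bookkeeping that produces the exponent $\varrho(k,G)$, but it contains a genuine gap in how it treats the unit index $u(K) := [\O_k^\times : \O_k^\times \cap \Norm_{K/k}\prod_w\O_{K,w}^\times]$. You dismiss this factor as ``bounded and only affecting the constant~$c$,'' and on that basis assert that $\mathfrak{g}_{K/k}$ is ``essentially multiplicative,'' so that the Dirichlet series has an Euler product. Both halves of this are problematic. First, $u(K)$ is a genuinely global quantity (whether a unit is everywhere locally a norm from $K$ is a condition at all places simultaneously), so $1/u(K)$ does \emph{not} factor as a product of local contributions, and the Dirichlet series $\sum_K \mathfrak{g}_{K/k}\Phi(K/k)^{-s}$ has no Euler product as written. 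Second, the way $u(K)$ correlates with the ramification weight $\prod_v\mathfrak{e}_v(K)$ could a priori change the power of $\log B$, not merely the constant; it takes a real argument to rule this out.

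The paper's resolution is exactly the content of \S 3.1: by the third isomorphism theorem and character orthogonality (Lemmas~\ref{lem:third_iso} and \ref{lem:subgroup_indicator}), one writes $1/u(K)$ as a normalised sum $[\O_k^\times:\O_k^{\times e}]^{-1}\sum_{\epsilon\in\O_k^\times/\O_k^{\times e}} f_\epsilon(K)$, where $f_\epsilon$ is the indicator that $\epsilon$ is everywhere locally a norm from $K$. Each $f_\epsilon$ \emph{is} a product of local conditions, so each $\epsilon$-subseries can be analysed by the Euler-product machinery (Theorem~\ref{thm:local_norms_weighted_by_ramification}, proved via M\"obius inversion over subgroups plus Poisson summation over idele class group characters --- two further steps your sketch elides, both needed because surjectivity and the global quotient by $k^\times$ are not local conditions either). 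One then observes, via Lemma~\ref{lem:rho}, that the resulting exponent $\varrho(k,G,\epsilon)$ is strictly less than $\varrho(k,G)$ unless $\epsilon$ lies in the Tate--Shafarevich group $\Sha_\omega(k,\mu_e)$, so only at most two values of $\epsilon$ contribute to the main term. Without this decomposition your Euler-product step does not get off the ground, and the claim that the unit index only perturbs $c$ is unproved. As a minor aside, your remark that ``$\varrho(k,G)$ is generally not an integer'' is also incorrect --- it always is (Lemma~\ref{lem:varrho_integer}) --- though this does not affect the Selberg--Delange step, which handles non-integer exponents anyway.
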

Our main result (Theorem \ref{thm:genus_constant}) also gives an explicit expression for the leading constant in the asymptotic formula.
To prove our results we consider the Dirichlet series
$$\sum_{\substack{\Gal(K/k) \cong G \\ \Phi(K/k) \leq B}} \frac{\mathfrak{g}_{K/k}}{\Phi(K/k)^s}.$$
Using class field theory, we may rewrite this in terms of an
idelic series, since the genus number has an idelic interpretation (Lemma
\ref{lem:genus_formula}). We then use Poisson summation to express this series
in terms of the Dedekind zeta function of $k$.

\begin{remark}\label{rmk:average}
	It is illustrative to compare the asymptotic in Theorem \ref{thm:genus} with the unweighted
	count of $G$-extensions, first established in \cite{Woo10}:
	$$\sum_{\substack{\Gal(K/k) \cong G \\ \Phi(K/k) \leq B}} 1 \sim c' B (\log B)^{\omega(k,G)-1},\quad
	\omega(k,G) = \sum_{g\in G\smallsetminus\{\id_G\}}\frac{1}{[k(\mu_{\ord g}): k]}.        
	$$
	In particular, one can interpret Theorem \ref{thm:genus} as saying that the 
	average value of the genus number is
	$$\frac{c}{c'}(\log \Phi(K/k))^{\sum_{g\in G\smallsetminus\{\id_G\}}(\ord g -1)/[k(\mu_{\ord g}): k]}. $$
	Note that this behaviour is in stark contrast to the case of cubic and quintic extensions \cite{MT16, MTT20} of $\Q$ (ordered by discriminant), where the average value of the genus number is constant.
\end{remark}

\begin{example} 
	\hfill
	\begin{enumerate}
	\item 
	For $\ell$ prime we have
	$$\varrho(k,\Z/\ell\Z) = \frac{\ell(\ell-1)}{[k(\mu_{\ell}):k]}.
	$$
	In the case $k = \Q$, this recovers \cite[Thm.~4.2]{Kim20} (here
	$\varrho(\Q,\Z/\ell\Z) = \ell$). All other cases are new.
	The following examples
	are all completely new.
	\item Biquadratic extensions:
	$$\varrho(k,(\Z/2\Z)^2) =
		6.
	$$
	\item A cyclic extension of non-prime degree:
		$$\varrho(k,\Z/4\Z) =
	\begin{cases}
		6, & \mu_4 \not \subset k \\
		10, & \mu_4 \subset k.	
	\end{cases}
	$$
	\end{enumerate}
\end{example}

Remark \ref{rmk:average} and the  case of quadratic fields suggest
that there should be very few $G$-extensions with any fixed genus number
$g$. Our next theorem confirms this by showing that every genus number occurs
at most $0\%$ of the time. Again, this is in stark contrast to the results for
cubic and quintic extensions ordered by discriminant~\cite{MT16, MTT20}.

\begin{theorem}\label{thm:zero_percent}
  Let $k$ be a number field, $G$ a non-trivial finite abelian group and $g\in\N$. Then
  \begin{equation*}
    \lim_{B\to\infty}\frac{|\{K/k\ :\ \Gal(K/k)\cong G,\ \Phi(K/k)\leq
      B\ \text{ and }\ \mathfrak{g}_{K/k}=g\}|}{|\{K/k\ :\ \Gal(K/k)\cong G,\ \Phi(K/k)\leq
      B\}|}=0.
  \end{equation*}
\end{theorem}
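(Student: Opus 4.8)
The plan is to bound the genus number below by an exponential in the number of ramified primes and then to show that $G$-extensions ramified at only boundedly many primes are negligibly rare. For an abelian extension $K/k$ write $\nu(K/k)$ for the number of finite places of $k$ ramifying in $K$. First I would unwind the idelic description of the genus number in Lemma~\ref{lem:genus_formula}: it exhibits $\mathfrak{g}_{K/k}$, up to a factor bounded purely in terms of $k$ and $G$, as $\prod_{v}e_v(K/k)/[K:k]$ with $e_v$ the ramification index at $v$; since $e_{\mathfrak{p}}\geq 2$ at every ramified finite place, this yields $\mathfrak{g}_{K/k}\geq c_0\,2^{\nu(K/k)}$ for a constant $c_0=c_0(k,G)>0$. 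Hence for fixed $g\in\N$ the condition $\mathfrak{g}_{K/k}=g$ forces $\nu(K/k)\leq M_g:=\lfloor\log_2(g/c_0)\rfloor$. Since the denominator in Theorem~\ref{thm:zero_percent} is $\sim c'B(\log B)^{\omega(k,G)-1}$ with $\omega(k,G)\geq 1$ by Wood's count (Remark~\ref{rmk:average}), it then suffices to prove that for every fixed $M\in\N$
\begin{equation}\label{eq:zp-red}
  \#\{\,K/k\ :\ \Gal(K/k)\cong G,\ \Phi(K/k)\leq B,\ \nu(K/k)\leq M\,\}=o\bigl(B(\log B)^{\omega(k,G)-1}\bigr).
\end{equation}

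To prove \eqref{eq:zp-red} I would fix some $z\in(0,1)$ and study the weighted Dirichlet series
\[
  F(s,z)=\sum_{\Gal(K/k)\cong G}z^{\nu(K/k)}\,\Phi(K/k)^{-s},
\]
whose specialisation at $z=1$ is the series controlling Theorem~\ref{thm:genus}. Running the argument of Theorem~\ref{thm:genus} with this extra weight --- class field theory to pass to an idelic sum, then Poisson summation over the ideles --- the point is that $z^{\nu(K/k)}$ is a product over the finite ramified places of $K$ of the constant $z$, hence a genuine product of local weights, so it merely rescales the local densities entering the Poisson computation. Only the tamely ramified local conditions, all of conductor exponent $1$, affect the behaviour at $s=1$ (the wildly ramified and higher-conductor terms give Dirichlet series converging absolutely for $\re s>1/2$), and each such prime is now counted with weight $z^1=z$; so I expect $F(s,z)=H(s,z)\,(s-1)^{-z\,\omega(k,G)}$ near $s=1$ with $H(\cdot,z)$ holomorphic and non-vanishing there, the exponent degenerating at $z=1$ to the order-$\omega(k,G)$ pole from Theorem~\ref{thm:genus}.

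A Tauberian theorem (Selberg--Delange, or Landau's theorem in the quantitative form already used in the paper) would then give $\sum_{\Phi(K/k)\leq B,\ \Gal(K/k)\cong G}z^{\nu(K/k)}\ll_z B(\log B)^{z\,\omega(k,G)-1}$. Since $z<1$ implies $z^{\nu}\geq z^{M}$ whenever $\nu\leq M$, this bounds
\[
  \#\{\,K/k\ :\ \Gal(K/k)\cong G,\ \Phi(K/k)\leq B,\ \nu(K/k)\leq M\,\}\ \leq\ z^{-M}\!\!\sum_{\substack{\Gal(K/k)\cong G\\ \Phi(K/k)\leq B}}\!\! z^{\nu(K/k)}\ \ll_{z,M}\ B(\log B)^{z\,\omega(k,G)-1},
\]
and as $z\,\omega(k,G)<\omega(k,G)$ the right-hand side is $o\bigl(B(\log B)^{\omega(k,G)-1}\bigr)$, proving \eqref{eq:zp-red} and hence the theorem.

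The hard part will be the analytic claim of the second paragraph: extracting the singularity $F(s,z)=H(s,z)(s-1)^{-z\,\omega(k,G)}$ with enough control to apply a Tauberian theorem. This is a one-parameter refinement of the analysis already done for Theorem~\ref{thm:genus} --- and since only one value $z\in(0,1)$ is needed, no uniformity in $z$ is required --- or, alternatively, it can be read as a weighted count of $G$-extensions twisted by a product of local densities, which is handled by the harmonic-analysis machinery of this paper. Morally, the argument says that for $G$-extensions ordered by conductor the quantity $\nu(K/k)$ satisfies an Erd\H{o}s--Kac type law with mean $\sim\omega(k,G)\log\log B$, so $\mathfrak{g}_{K/k}\geq c_0\,2^{\nu(K/k)}\to\infty$ outside a set of density $0$; the reduction in the first paragraph in fact only uses the crude bound $\mathfrak{g}_{K/k}\gg_{k,G}2^{\nu(K/k)}$, or even merely that $\mathfrak{g}_{K/k}\to\infty$ as $\nu(K/k)\to\infty$ uniformly over $\Gal(K/k)\cong G$.
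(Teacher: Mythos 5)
Your first step coincides with the paper's: Lemma~\ref{lem:genus_formula} gives $\mathfrak{g}_{K/k}\gg_{k,G} 2^{\nu(K/k)}$, reducing the theorem to showing that $G$-extensions with a bounded number of ramified finite places have density zero. From there your second step is genuinely different. The paper runs a second-moment (Chebyshev) sieve: it applies \cite[Theorem~3.1]{HNP2} with ramification prescribed at one and at two auxiliary primes $\fp,\fq$, extracts local densities $\delta_\fp$ with $\sum_\fp\delta_\fp$ divergent, and invokes \cite[Lemma~5.1]{FW17} to conclude that $\nu(K/k)\leq r$ is rare. You instead propose a moment-generating-function argument: analyse the $z$-twisted Dirichlet series $F(s,z)=\sum z^{\nu(K/k)}\Phi(K/k)^{-s}$ for a single $z\in(0,1)$, show its branch-point singularity at $s=1$ has exponent $z\,\omega(k,G)$, apply Selberg--Delange, and then use $\#\{\nu\leq M\}\leq z^{-M}\sum z^{\nu}$. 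This is sound: the weight $z^{\nu}$ is multiplicative over places, so it slots into the Poisson/Euler-product machinery; at $v\notin S$ the Euler factor becomes $1+z\,\tilde s_{x,H}(v)q_v^{-s}$ with $\tilde s_{x,H}$ frobenian, the $x=1$ term has mean $\omega(k,H)$, the inequality $\tilde s_{x,H}(v)\leq \tilde s_{1,H}(v)$ persists after multiplication by $z>0$, and the M\"obius-inversion terms for $H\lneq G$ have strictly smaller exponent $z\,\omega(k,H)$. Since Selberg--Delange handles non-integer exponents and $\omega(k,G)\geq 1$, you get $\ll_z B(\log B)^{z\,\omega(k,G)-1}$, which is a power-of-$\log$ saving over the paper's non-quantitative $o(\,\cdot\,)$. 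Trade-offs: your route requires redoing the full Euler-product analysis with the deformation parameter (the part you flag as ``the hard part'', and which you have only sketched), whereas the paper's argument only needs asymptotics with finitely many prescribed local conditions, which are available off the shelf from \cite{HNP2}; in exchange yours yields a stronger, quantitative conclusion. One small slip: $F(s,1)$ is the series controlling the \emph{unweighted} count of Remark~\ref{rmk:average} (Wood's theorem, singularity exponent $\omega(k,G)$), not the genus-weighted series of Theorem~\ref{thm:genus} (exponent $\varrho(k,G)$); this does not affect the argument.
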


\begin{ack}
We thank Alex Bartel for useful discussions on genus groups and Hendrik Lenstra for asking the question that led to Theorem~\ref{thm:zero_percent}. We thank the anonymous referee for their careful reading of our paper.
Christopher Frei was supported by EPSRC Grant EP/T01170X/1 and
EP/T01170X/2. Daniel Loughran was supported by UKRI Future Leaders
Fellowship MR/V021362/1. Rachel Newton was supported by EPSRC Grant
EP/S004696/1 and EP/S004696/2, and UKRI Future Leaders Fellowship
MR/T041609/1 and MR/T041609/2.
\end{ack}

\section{Harmonic analysis}
To prove Theorem \ref{thm:genus}, we will use a formula for the genus number (Lemma~\ref{lem:genus_formula}) which expresses $\mathfrak{g}_{K/k}$ in terms of various invariants of the extension $K/k$. Most importantly for us, it reduces our task to counting abelian extensions for which a given unit is everywhere locally a norm, weighted by the products of the ramification indices. We will achieve this via a general result proved using harmonic analysis, which we state as Theorem~\ref{thm:local_norms_weighted_by_ramification} below. Let $G$ be a finite abelian group, let $F$ be a field and $\bar{F}$ a separable closure of $F$. We define a \emph{sub-$G$-extension} of $F$ to be a continuous homomorphism $\Gal(\bar{F}/F)\to G$. A sub-$G$-extension corresponds to a pair $(L/F,\psi)$, where $L/F$ is a Galois extension inside $\bar{F}$ and $\psi$ is an injective homomorphism $\Gal(L/F)\to G$. A \emph{$G$-extension} of $F$ is a surjective continuous homomorphism $\Gal(\bar{F}/F)\to G$. 

Having fixed an algebraic closure $\bar{k}$ of the number field $k$, we write $\gextk$ for the set of $G$-extensions of $k$.
For each place $v$ of $k$, we fix an algebraic closure $\bar{k}_v$ and compatible embeddings $k\hookrightarrow\bar{k}\hookrightarrow\bar{k}_v$ and $k\hookrightarrow k_v \hookrightarrow \bar{k}_v$. 
Hence, a sub-$G$-extension $\varphi$ of $k$ induces a sub-$G$-extension $\varphi_v$ of $k_v$ at every place $v$.
We write $\mathfrak{e}_v(\varphi)=\mathfrak{e}(\varphi_v)=\mathfrak{e}(L_v/k_v)$ for the ramification index of the extension $L_v/k_v$ given by $\varphi_v$. Our main counting result is the following:

\begin{theorem}\label{thm:local_norms_weighted_by_ramification}
	Let $k$ be a number field, $G$ a non-trivial finite abelian group and $\mathcal{A} \subset  k^\times$ a finitely generated subgroup.
	Let $S$ be a finite set of places of $k$ and for $v \in S$ let 
	$\Lambda_v$ be a set of sub-$G$-extensions of $k_v$.
	For $v \notin S$ let $\Lambda_v$ be the set of sub-$G$-extensions of $k_v$ corresponding
	to those extensions of local fields $L/k_v$ for which every element of $\mathcal{A}$ is a local norm from $L/k_v$.
	Let $\Lambda := (\Lambda_v)_{v \in \Omega_k}$. Then there exist
	$ c_{k,G,\Lambda} \geq 0$ and $\delta=\delta(k,G,\mathcal{A}) >0$
	such that
	\begin{equation*}
		\sum_{\substack{\varphi\in\gextk\\\Phi(\varphi)\leq B\\\varphi_v\in\Lambda_v \forall v\in\Omega_k}}\prod_v\mathfrak{e}_v(\varphi) = c_{k,G,\Lambda}B(\log B)^{\varrho(k,G,\mathcal{A})-1}
		+  O(B(\log B)^{\varrho(k,G,\mathcal{A})-1-\delta}), \quad B\to\infty,
	\end{equation*}
	where
        \begin{equation*}
          \varrho(k,G,\mathcal{A}) = \sum_{g\in G\smallsetminus\{\id_G\}}\frac{\ord g}{[k_{\ord g}: k]}
          \quad \text{and }\quad  k_d=k(\mu_d,\sqrt[d]{\cA}).
        \end{equation*}
	Moreover $c_{k,G,\Lambda} >0$ if there exists a 
	sub-$G$-extension of $k$
	which realises the given local conditions for all places $v$.
\end{theorem}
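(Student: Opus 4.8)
\emph{Proof strategy.} The plan is to translate the weighted count into the analytic behaviour of a Dirichlet series attached to characters of the idele class group, to evaluate that series by harmonic analysis, and to conclude with a Tauberian theorem. By global class field theory, sub-$G$-extensions of $k$ are the same as continuous homomorphisms $\chi\colon\mathbf{A}_k^\times/k^\times\to G$ ($G$-extensions being the surjective ones), and both $\Phi(\chi)$ and the ramification indices $\mathfrak{e}_v(\chi)=\#\chi_v(\mathcal{O}_v^\times)$ are read off from the local components $\chi_v\colon k_v^\times\to G$. M\"obius inversion over the subgroup lattice of $G$ reduces the count of surjective $\chi$ to counts of homomorphisms with image in a fixed $H\le G$; since $\varrho(k,H,\mathcal{A})<\varrho(k,G,\mathcal{A})$ whenever $H\subsetneq G$ (each summand $\ord g/[k_{\ord g}:k]$ being strictly positive), the proper subgroups contribute only to the error term, so it suffices to treat the sum over \emph{all} $\chi$.

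The associated Dirichlet series $D(s)=\sum_\chi\big(\prod_v\mathfrak{e}_v(\chi)\big)\Phi(\chi)^{-s}\prod_v\mathbf{1}[\chi_v\in\Lambda_v]$ converges for $\re s>1$. Choose a finite set $T\supseteq S$ of places containing the archimedean places, the places above $|G|$, and enough finite places that $\mathcal{O}_{k,T}$ is a principal ideal domain and $\mathcal{A}\subseteq\mathcal{O}_{k,T}^\times$. Then $\mathbf{A}_k^\times=k^\times\cdot(\prod_{v\in T}k_v^\times\times\prod_{v\notin T}\mathcal{O}_v^\times)$ with intersection $\mathcal{O}_{k,T}^\times$, so a homomorphism $\chi$ is the same as a tuple of local homomorphisms, trivial on the diagonally embedded finitely generated group $\mathcal{O}_{k,T}^\times$. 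Poisson summation on $\mathcal{O}_{k,T}^\times$ --- equivalently, Fourier inversion over the finite dual of $\Hom(\mathcal{O}_{k,T}^\times,G)$ --- detects this gluing condition and rewrites $D(s)=\tfrac{1}{N}\sum_\psi D_\psi(s)$ as a finite sum of Euler products $D_\psi(s)=\prod_v D_{v,\psi}(s)$, where $D_{v,\psi}$ is the local sum of $\mathfrak{e}_v(\chi_v)q_v^{-s\mathfrak{f}_v(\chi_v)}$ over local $\chi_v\in\Lambda_v$, twisted by the $v$-component of $\psi$.

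For the local analysis, the factors at the finitely many places of $T$ are finite and holomorphic in $s$. At a tame place $v\notin T$ the homomorphisms $\chi_v\colon\mathcal{O}_v^\times\to G$ correspond to elements $g\in G$ of order dividing $q_v-1$, with $\mathfrak{e}_v=\ord g$ and conductor exponent $1$ for $g\ne\id_G$, and for $v\notin S$ the condition $\chi_v\in\Lambda_v$ holds, for $g\ne\id_G$, precisely when $v$ splits completely in $k_{\ord g}=k(\mu_{\ord g},\sqrt[\ord g]{\mathcal{A}})$. Summing the tame contributions and using the analytic behaviour of Dedekind zeta functions and the Chebotarev density theorem, one gets for the trivial twist $\psi_0$ a factorisation
\[ D_{\psi_0}(s)=E_0(s)\prod_{g\in G\smallsetminus\{\id_G\}}\zeta_{k_{\ord g}}(s)^{\ord g/[k_{\ord g}:k]}, \]
with $E_0$ holomorphic and non-vanishing on $\re s\ge 1$, so that $D_{\psi_0}$ has a singularity of type $(s-1)^{-\varrho(k,G,\mathcal{A})}$ at $s=1$. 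Each $D_\psi$ has pole order at most $\varrho(k,G,\mathcal{A})$ there; those attaining it assemble into the leading coefficient $c_{k,G,\Lambda}\ge 0$, while the others, together with the subgroup corrections above, have pole order at most $\varrho(k,G,\mathcal{A})-\delta$ for some $\delta>0$. Granting meromorphic continuation of all the $D_\psi$ to $\re s\ge 1$ with polynomial vertical growth (from standard properties of the $\zeta_{k_{\ord g}}$), a quantitative Tauberian theorem of Selberg--Delange type yields the asserted asymptotic, the saving $\delta$ being the gap between $\varrho(k,G,\mathcal{A})$ and the pole orders of all non-leading contributions.

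For the final claim, $c_{k,G,\Lambda}$ is a non-negative real by construction (the coefficients of $D(s)$ being non-negative), so it suffices to prove a lower bound $\gg B(\log B)^{\varrho(k,G,\mathcal{A})-1}$ for the left-hand side under the hypothesis. Given a sub-$G$-extension $\varphi_0$ realising all the local conditions, one twists $\varphi_0$ by characters ramified only at auxiliary tame primes chosen --- via Chebotarev --- so as to keep every local condition satisfied at those primes, to be trivial at all places of $S$, and (for finitely many of them) to make the product surjective; a direct and much simpler count of these twists supplies the lower bound. I expect the main obstacle to be the harmonic analysis of the second and third steps: capturing exactly the class-group and unit obstruction to gluing local homomorphisms, dealing with the wildly ramified places above $|G|$, and --- above all --- proving enough meromorphic continuation and vertical-growth control of the twisted Euler products $D_\psi$ to isolate the singularity of order exactly $\varrho(k,G,\mathcal{A})$ and to bound the rest with a genuine power saving.
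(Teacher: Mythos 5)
Your proposal follows essentially the same route as the paper: class field theory to pass to idele class characters, M\"obius inversion over subgroups $H\subseteq G$ to handle surjectivity, Poisson summation on the $S$-units (with $S$ enlarged so that $\O_S$ is a PID and contains the places above $|G|$ and the archimedean places) to reduce to a finite sum of twisted Euler products, frobenian/Chebotarev analysis of the local factors at tame places, a Selberg--Delange Tauberian theorem, and positivity by a twisting argument. The only cosmetic difference is that you package the singularity at $s=1$ as $\prod_{g\ne\id}\zeta_{k_{\ord g}}(s)^{\ord g/[k_{\ord g}:k]}$ whereas the paper writes $\zeta_k(s)^{\varrho(k,G,\cA,x)}$ times a holomorphic factor, but these agree up to a non-vanishing holomorphic factor near $s=1$.
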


Theorem \ref{thm:local_norms_weighted_by_ramification} has a very similar statement to \cite[Thm.~3.1]{HNP2}; the primary difference is that the sum in Theorem~\ref{thm:local_norms_weighted_by_ramification} is weighted by the product of ramification indices $\prod_v\mathfrak{e}_v(\varphi)$. We prove Theorem \ref{thm:local_norms_weighted_by_ramification} using the method, based upon harmonic analysis, that we developed to prove \cite[Thm.~3.1]{HNP2}, and many of the steps are formally similar. The  ramification indices come into play when calculating the relevant local Fourier transforms, and these in turn change the singularity type of the resulting Dirichlet series. We now begin the proof of Theorem \ref{thm:local_norms_weighted_by_ramification}.

\subsection{M\"{o}bius inversion and Poisson summation} \label{sec:Mob}
These steps are very similar to those taken in~\cite[\S 3]{HNP} and~\cite[\S3]{HNP2}, so we shall be brief. To prove the result we are free to increase the set $S$, so we assume that $S$ contains all archimedean places of $k$ and all places of $k$ lying above the primes $p \leq |G|^2$. Moreover, we assume that $\mathcal{A} \subset \O_S^\times$ and that $\O_S$ has trivial class group.
Let 
$$f_{\Lambda_v}: \Hom(\Gal(\bar{k}_v/k_v),G) \to \Z, \quad \varphi_v \mapsto \one_{\Lambda_v}(\varphi_v) \mathfrak{e}_v(\varphi_v)$$
where $\mathfrak{e}_v(\varphi_v)$ denotes the ramification index of $\varphi_v$ and $\one_{\Lambda_v}$ the indicator function of $\Lambda_v$. We let $f_\Lambda$ be the product of the $f_{\Lambda_v}$. The Dirichlet series for our counting problem is
\begin{equation}\label{eq:def_F_s}
F(s) = \sum_{\varphi \in \gextk}\frac{f_\Lambda(\varphi)}{\Phi(\varphi)^s}.
\end{equation}
We perform M\"{o}bius inversion to write this as
$$
F(s) = \sum_{H \subset G}\mu(G/H)\sum_{\varphi \in \Hom(\Gal(\bar{k}/k),H)}\frac{f_\Lambda(\varphi)}{\Phi(\varphi)^s}
$$
where the sum is over subgroups $H$ of $G$ and $\mu$ denotes the M\"{o}bius function on isomorphism classes of finite abelian groups (cf.~\cite[\S3.3.1]{HNP2}). By global class field theory, we have the identification
$$\Hom(\Gal(\bar{k}/k),H) = \Hom(\Adele_k^\times/k^\times,H),$$
where $\Adele_k^\times$ denotes the id\`eles of $k$.
This allows us to view $f_\Lambda$ as a function on $\Hom(\Adele_k^\times/k^\times,H)$, and leads to the expression
\begin{equation} \label{eqn:Mob}
F(s) = \sum_{H \subset G}\mu(G/H)\sum_{\chi \in \Hom(\Adele_k^\times/k^\times,H)}\frac{f_\Lambda(\chi)}{\Phi(\chi)^s}.
\end{equation}
We approach the inner sums via Poisson summation. 
For each place $v$, we equip the finite group $\Hom(k^\times_v,H)$ with the unique Haar measure $\mathrm{d}\chi_v$ such that 
$$\vol(\Hom(k_v^\times/\OO_v^\times,H)) = 1$$
(for $v$ archimedean, we take $\OO_v = k_v$ by convention).
The product of these measures yields a well-defined measure $\mathrm{d}\chi$ on $\Hom(\Adele_k^\times,H)$. We say that an element of $\Hom(k^\times_v,H)$ is \emph{unramified} if it lies in the subgroup $\Hom(k_v^\times/\OO_v^\times,H)$, i.e.\ if it is trivial on $\OO_v^\times$.
The Pontryagin dual of $\Hom(\Adele_k^\times,H)$ is naturally identified with $\Adele_k^\times \otimes \dual{H}$, where $\dual{H}=\Hom(H,\C^\times)$ denotes the Pontryagin dual of $H$ (similarly with $\Adele_k^\times$ replaced by $\Adele_k^\times/k^\times$ or $k_v^\times$; cf.~\cite[\S3.1]{HNP}).

The function $f_\Lambda/\Phi^s$ is a product of local functions $f_{\Lambda_v}/\Phi_v^s$ on $\Hom(k_v^\times,H)$, where $\Phi_v(\chi_v)$ is the reciprocal of the $v$-adic norm of the conductor of $\Ker\chi_v$. For $v\notin S$, these local functions take only the value $1$ on the unramified elements by our choice of $S$, thus $f_\Lambda/\Phi^s$ extends to a well-defined continuous function on $\Hom(\Adele_k^\times,H)$. We define its Fourier transform to be
$$\widehat{f}_{\Lambda,H}(x;s) = \int_{\chi \in \Hom(\Adele_k^\times,H)}
\frac{f_\Lambda(\chi) \langle \chi, x \rangle}{\Phi(\chi)^s} \mathrm{d}\chi,$$
where $x = (x_v)_v \in  \Adele_k^\times \otimes H^\wedge$.  Similarly, for $x_v \in k_v^{\times} \otimes \dual{H}$ we have the local Fourier transform
$$\widehat{f}_{\Lambda_v,H}(x_v;s) = \int_{\chi_v \in \Hom(k_v^\times,H)}
\frac{f_{\Lambda_v}(\chi_v) \langle \chi_v, x_v \rangle}{\Phi_v(\chi_v)^s} \mathrm{d}\chi_v.$$
For $\re s \gg 1$, the global Fourier transform exists and defines a holomorphic function in this domain, and there is an Euler product decomposition
\begin{equation} \label{eqn:Euler_product}
	\widehat{f}_{\Lambda,H}(x;s) = \prod_v \widehat{f}_{\Lambda_v,H}(x_v;s).
\end{equation}
As in \cite[Prop.~3.9]{HNP2}, applying Poisson summation one obtains the following:
\begin{equation}\label{eq:poisson}
	 \sum_{\chi \in \Hom(\Adele_k^\times/k^{\times} , H)} \frac{f_\Lambda(\chi)}{\Phi(\chi)^{s}}
	=\frac{1}{|\OO_k^\times\otimes\dual{H}|} 
	\sum_{x \in \OO_S^\times \otimes \dual{H}}\widehat{f}_{\Lambda, H}(x;s), \quad \re s > 1.
\end{equation}

To analyse the Poisson sum, we need to calculate the Fourier transforms. We begin by studying the local Fourier transforms.

\subsection{Local Fourier transforms}
We first give a formula for the ramification index via local class field theory.
For any place $v$ of $k$ we have the exact sequence
$$1	\to \O_v^\times \to k_v^\times \to k_v^\times/\O_v^\times \to 1.$$
A choice of uniformiser gives an isomorphism
$k_v^\times \cong \O_v^\times \bigoplus k_v^\times/\O_v^\times$
and thus
$$\Hom(k_v^\times, H) \cong \Hom(\O_v^\times, H) \bigoplus 
\Hom(k_v^\times/\O_v^\times, H).$$
For a character $\chi_v: k_v^\times \to H$, we write its decomposition as 
$$\chi_v = (\chi_{v,\textrm{r}}, \chi_{v,\textrm{nr}})$$
($\chi_{v,\textrm{r}}$ is the ``ramified part'' and $\chi_{v,\textrm{nr}}$ is the ``non-ramified part''). The following lemma is standard class field theory; we include its proof for completeness. 

\begin{lemma} \label{lem:ram_char}
	We have $\mathfrak{e}_v(\chi_v) =|\chi_v(\OO_v^\times)|= \lvert\im \chi_{v,\textrm{r}}\rvert$. If $\chi_v$ is tamely ramified then $\im \chi_{v,\textrm{r}}$ is a cyclic group and therefore $\mathfrak{e}_v(\chi_v)= \ord( \chi_{v,\textrm{r}})$.
\end{lemma}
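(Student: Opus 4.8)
The plan is to translate everything into local class field theory and read off the ramification index from the inertia subgroup. Assume first that $v$ is non-archimedean. Let $\tilde\chi_v\colon \Gal(\bar{k}_v/k_v)\to H$ be the Galois character corresponding to $\chi_v$ under the reciprocity isomorphism $\Hom(k_v^\times,H)\cong\Hom(\Gal(\bar{k}_v/k_v),H)$, let $L_v=\bar{k}_v^{\ker\tilde\chi_v}$ be the associated abelian extension of $k_v$ (so that $L_v/k_v$ is exactly the extension of local fields defining $\mathfrak{e}_v(\chi_v)$), and let $\theta\colon k_v^\times\twoheadrightarrow\Gal(L_v/k_v)$ be the Artin reciprocity map. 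By construction $\chi_v$ factors as $k_v^\times\xrightarrow{\theta}\Gal(L_v/k_v)\hookrightarrow H$, with the second arrow injective.

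Now I would invoke the two standard facts of local class field theory that do all the work: the Artin map $\theta$ carries $\OO_v^\times$ onto the inertia subgroup $I(L_v/k_v)\subseteq\Gal(L_v/k_v)$, and $|I(L_v/k_v)|$ equals the ramification index $\mathfrak{e}(L_v/k_v)=\mathfrak{e}_v(\chi_v)$. Since $\Gal(L_v/k_v)\hookrightarrow H$ is injective, $\chi_v(\OO_v^\times)$ is identified with $I(L_v/k_v)$, whence $|\chi_v(\OO_v^\times)|=\mathfrak{e}_v(\chi_v)$. The equality with $|\im\chi_{v,\textrm{r}}|$ is then immediate from the definitions: under the splitting $k_v^\times\cong\OO_v^\times\oplus k_v^\times/\OO_v^\times$ determined by the chosen uniformiser, $\chi_{v,\textrm{r}}$ is simply the restriction $\chi_v|_{\OO_v^\times}$, so $\im\chi_{v,\textrm{r}}=\chi_v(\OO_v^\times)$.

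For the tame case I would add the third standard fact: when $L_v/k_v$ is tamely ramified, $I(L_v/k_v)$ embeds into the multiplicative group of the residue field of $L_v$ and is therefore cyclic. Hence $\im\chi_{v,\textrm{r}}$ is cyclic, and for a homomorphism with cyclic image the order in $\Hom(\OO_v^\times,H)$ equals the cardinality of the image (in general the order of a homomorphism is the exponent of its image, and exponent coincides with order for cyclic groups); thus $\ord(\chi_{v,\textrm{r}})=|\im\chi_{v,\textrm{r}}|=\mathfrak{e}_v(\chi_v)$. Finally, at archimedean places the statement is immediate from the conventions in force: there $\OO_v=k_v$, so $k_v^\times/\OO_v^\times$ is trivial and $\chi_v=\chi_{v,\textrm{r}}$, while $\C/\R$ is declared ramified, so $|\im\chi_v|$ is $1$ or $2$ according to whether $\chi_v$ is trivial, matching $\mathfrak{e}_v(\chi_v)$.

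There is essentially no obstacle: the content is entirely the dictionary of local class field theory. The only points requiring a moment's care are checking that the ``ramified part'' arising from an arbitrary choice of uniformiser is genuinely $\chi_v|_{\OO_v^\times}$ --- it is, because the splitting restricts to the identity on $\OO_v^\times$ --- and recalling why a homomorphism into an abelian group has order equal to the exponent of its image.
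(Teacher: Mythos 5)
Your proof is correct, and it reaches the same conclusion by a slightly different route from the paper. Where you invoke as a black box the standard theorem that the local Artin map carries $\OO_v^\times$ onto the inertia subgroup $I(L_v/k_v)$ (whose order is $\mathfrak{e}(L_v/k_v)$), the paper instead derives the cardinality identity directly: it applies the snake lemma to the commutative diagram of short exact sequences
\[
0\to\OO_w^\times\to K_w^\times\xrightarrow{\ord_w}\Z\to 0,\qquad 0\to\OO_v^\times\to k_v^\times\xrightarrow{\ord_v}\Z\to 0
\]
with vertical norm maps, the rightmost being multiplication by the residue degree $\mathfrak{f}_v$. Combined with $|k_v^\times/N_{K_w/k_v}K_w^\times|=[K_w:k_v]=\mathfrak{e}_v\mathfrak{f}_v$ from the reciprocity isomorphism, this gives $|\OO_v^\times/N(\OO_w^\times)|=\mathfrak{e}_v$ without ever naming inertia. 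The two arguments carry the same class-field-theoretic content; the paper's is marginally more self-contained (it re-derives the ``units map onto inertia'' cardinality statement as part of the proof), while yours is more conceptual and identifies the group, not just its order. For the tame case the paper argues on the $k_v$ side, showing $1+\mathfrak{m}_v\subset N(\OO_w^\times)=\Ker\chi_{v,\textrm{r}}$ so that the cyclic group $\F_v^\times$ surjects onto $\im\chi_{v,\textrm{r}}$; you argue on the $L_v$ side via the embedding of tame inertia into the residue field of $L_v$. Both are standard and both work; note that the paper's variant has the small advantage of staying entirely within $k_v$, matching the ambient setup of the section.
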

\begin{proof}
The result for $v$ archimedean is immediate so let us assume that $v$ is non-archimedean. Let $K_w$ denote the field extension of $k_v$ corresponding via class field theory to $\chi_v$, and write $\O_{w}$ for the ring of integers of $K_{w}$. Let $\ord_v: k_v^\times\twoheadrightarrow\Z$ be the normalised valuation on $k_v$ and let $\ord_w: K_w^\times\twoheadrightarrow\Z$ be the normalised valuation on $K_w$. Since $\ord_v=\frac{1}{\mathfrak{e}_v(\chi_v)}\ord_w$ and $[K_w:k_v]=\mathfrak{e}_v(\chi_v)\mathfrak{f}_v(\chi_v)$, where $\mathfrak{f}_v$ denotes the residue degree, the following diagram commutes:
\[
\xymatrix{
0\ar[r] & \O_{w}^\times\ar[r]\ar[d]^{N_{K_w/k_v}}& K_w^\times\ar[r]^{\ord_w}\ar[d]^{N_{K_w/k_v}}& \Z\ar[r] \ar[d]^{\mathfrak{f}_v(\chi_v)}&0\\
 0\ar[r] & \O_{v}^\times\ar[r]& k_v^\times\ar[r]^{\ord_v}& \Z\ar[r] &0.
}
\]
The snake lemma gives an exact sequence
\[0 \to \O_{v}^\times/N_{K_w/k_v}(\O_{w}^\times)\to k_v^\times/N_{K_w/k_v}(K_w^\times)\to \Z/\mathfrak{f}_v(K)\Z \to 0.\]
The local reciprocity map gives an isomorphism $k_v^\times/N_{K_w/k_v}(K_w^\times)\to\Gal(K_w/k_v)$, whereby $|k_v^\times/N_{K_w/k_v}(K_w^\times)|=[K_w:k_v]=\mathfrak{e}_v(\chi_v)\mathfrak{f}_v(\chi_v)$. Therefore, \[\mathfrak{e}_v(\chi_v)=|\O_{v}^\times/N_{K_w/k_v}(\O_{w}^\times)|=|\O_{v}^\times/\Ker\chi_{v,\textrm{r}}|,\]
as required. Now let $\mathfrak{m}_v$ denote the maximal ideal of $\O_v$.
If $K_w/k_v$ is tamely ramified then $1+\mathfrak{m}_v\subset N_{K_w/k_v}(\O_{w}^\times)=\Ker\chi_{v,\textrm{r}}$ and hence the cyclic group $\F_v^\times$ surjects onto $\im \chi_{v,\textrm{r}}$.
\end{proof}

\begin{lemma}\label{lem:unram_inv_norms}
Let $v\notin S$. The function $f_{\Lambda_v}$ is $\Hom(k_v^\times/\cO_v^\times, H)$-invariant and $f_{\Lambda_v}(\psi_v)=1$ for all $\psi_v\in \Hom(k_v^\times/\cO_v^\times, H)$.
\end{lemma}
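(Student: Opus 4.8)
The plan is to use the factorisation $f_{\Lambda_v}=\one_{\Lambda_v}\cdot\mathfrak{e}_v$ and to check $\Hom(k_v^\times/\cO_v^\times,H)$-invariance of the two factors separately. So fix $v\notin S$, a character $\chi_v\in\Hom(k_v^\times,H)$, and an unramified character $\psi_v\in\Hom(k_v^\times/\cO_v^\times,H)$; the goal is $f_{\Lambda_v}(\chi_v\psi_v)=f_{\Lambda_v}(\chi_v)$. For the ramification index this is immediate from Lemma~\ref{lem:ram_char}: we have $\mathfrak{e}_v(\chi_v)=|\chi_v(\cO_v^\times)|$, and since $\psi_v$ is trivial on $\cO_v^\times$ the restrictions of $\chi_v\psi_v$ and $\chi_v$ to $\cO_v^\times$ coincide, so $\mathfrak{e}_v(\chi_v\psi_v)=\mathfrak{e}_v(\chi_v)$.

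For the indicator function I would first record the class field theory description of $\Lambda_v$. If $L/k_v$ denotes the finite abelian extension attached to $\chi_v$ (so that the local reciprocity map induces $\Gal(L/k_v)\cong k_v^\times/\Ker\chi_v$), then local class field theory identifies the norm subgroup $N_{L/k_v}(L^\times)$ with $\Ker\chi_v$. Hence an element $a\in k_v^\times$ is a local norm from $L/k_v$ if and only if $\chi_v(a)=\id_H$, and therefore $\chi_v\in\Lambda_v$ if and only if $\cA\subset\Ker\chi_v$. Now, by the standing assumptions $\cA\subset\O_S^\times$ and $v\notin S$, so every element of $\cA$ lies in $\cO_v^\times$; since $\psi_v$ vanishes on $\cO_v^\times$ this gives $(\chi_v\psi_v)(a)=\chi_v(a)$ for all $a\in\cA$, whence $\cA\subset\Ker(\chi_v\psi_v)$ if and only if $\cA\subset\Ker\chi_v$. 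Thus $\one_{\Lambda_v}(\chi_v\psi_v)=\one_{\Lambda_v}(\chi_v)$, and combining with the previous paragraph, $f_{\Lambda_v}$ is $\Hom(k_v^\times/\cO_v^\times,H)$-invariant.

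It remains to compute the value. The trivial character $\one\in\Hom(k_v^\times,H)$ corresponds to the trivial extension $L=k_v$, for which every element of $k_v^\times$ is trivially a norm, so $\one\in\Lambda_v$; and $\mathfrak{e}_v(\one)=1$. Hence $f_{\Lambda_v}(\one)=1$, and by the invariance just established $f_{\Lambda_v}(\psi_v)=f_{\Lambda_v}(\one\cdot\psi_v)=1$ for every $\psi_v\in\Hom(k_v^\times/\cO_v^\times,H)$. This is a routine verification rather than a substantial argument; the only points needing care are the identification $N_{L/k_v}(L^\times)=\Ker\chi_v$ from local class field theory and the observation that the elements of $\cA$ are $v$-adic units, both of which follow directly from the assumptions placed on $S$.
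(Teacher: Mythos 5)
Your proof is correct and takes essentially the same approach as the paper's one-line proof: it splits $f_{\Lambda_v}=\one_{\Lambda_v}\cdot\mathfrak{e}_v$, handles the ramification index via Lemma~\ref{lem:ram_char}, and handles the indicator via the local class field theory identification $N_{L/k_v}(L^\times)=\Ker\chi_v$ together with $\cA\subset\cO_v^\times$ for $v\notin S$ (which is exactly the content of the cited \cite[Lemma~3.7]{HNP2}). The only difference is that you have written out the details the paper delegates to the citation.
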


\begin{proof}
This follows from Lemma~\ref{lem:ram_char} and \cite[Lemma~3.7]{HNP2}.
\end{proof}

For $x\in k^\times\otimes\dual H$, we denote by $x_v$ the
image of $x$ under $k^\times\otimes\dual H \to k_v^\times\otimes\dual
H$. By $\cA_v$ we denote the image of our finitely generated
subgroup $\cA\subset k^{\times}$ under  $k^\times\subset k_v^{\times}$. Recall that, by
our choice of $S$, we have $\cA_v\subset\OO_v^\times$ for all $v\notin S$.

\begin{lemma}\label{lem:fourier_transform}
  For $v\notin S$ and
  $x\in\OO_S^\times\otimes \dual H$, 
  let
  \begin{equation*}
    s_{x,H}(v) = -1+\sum_{\chi_v\in\Hom(\OO_v^\times/\cA_v,H)}\ord(\chi_v)\langle \chi_v,x_v\rangle.
  \end{equation*}
  Then
  \begin{equation*}
    \widehat{f}_{\Lambda_v,H}(x_v;s) = 1+\frac{s_{x,H}(v)}{q_v^s}.
  \end{equation*}
\end{lemma}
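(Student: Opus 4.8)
The plan is to compute the local integral defining $\widehat{f}_{\Lambda_v,H}(x_v;s)$ directly, using the splitting $\Hom(k_v^\times,H) \cong \Hom(\OO_v^\times,H) \oplus \Hom(k_v^\times/\OO_v^\times,H)$ afforded by a choice of uniformiser. Since $v \notin S$, Lemma \ref{lem:unram_inv_norms} tells us that $f_{\Lambda_v}$ is invariant under the unramified subgroup $\Hom(k_v^\times/\OO_v^\times,H)$ and takes the value $1$ there; more precisely, $f_{\Lambda_v}(\chi_v)$ depends only on the ramified part $\chi_{v,\mathrm{r}} \in \Hom(\OO_v^\times,H)$, and it is supported on those $\chi_{v,\mathrm{r}}$ for which every element of $\cA_v$ is a local norm, i.e. (by the norm-group description via local class field theory, cf. \cite[Lemma~3.7]{HNP2}) on those $\chi_{v,\mathrm{r}}$ that are trivial on $\cA_v$, equivalently $\chi_{v,\mathrm{r}} \in \Hom(\OO_v^\times/\cA_v, H)$. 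On such characters, Lemma \ref{lem:ram_char} gives $\mathfrak{e}_v(\chi_v) = |\im\chi_{v,\mathrm{r}}| = \ord(\chi_{v,\mathrm{r}})$, the last equality because $v \nmid |G|$ (as $p > |G|^2$ for $v \notin S$) forces tame ramification.

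With this, the integral factors as a product over the two summands. The pairing $\langle \chi_v, x_v\rangle$ likewise splits according to the decomposition $k_v^\times \otimes \dual H \cong (\OO_v^\times \otimes \dual H) \oplus (k_v^\times/\OO_v^\times \otimes \dual H)$; since $x \in \OO_S^\times \otimes \dual H$ is a global $S$-unit and $v \notin S$, the image $x_v$ lies entirely in the $\OO_v^\times$-component, so the pairing with the unramified part of $\chi_v$ is trivial. Next I would use the conductor exponent: for $v \notin S$, tameness gives $\Phi_v(\chi_v) = q_v$ exactly when $\chi_{v,\mathrm{r}} \neq 1$ (conductor exponent $1$) and $\Phi_v(\chi_v) = 1$ when $\chi_{v,\mathrm{r}} = 1$. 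Thus, integrating over the unramified part $\Hom(k_v^\times/\OO_v^\times,H)$ — which has total mass $1$ by our normalisation of the Haar measure — contributes a factor $1$, and the integral collapses to a finite sum over $\chi_{v,\mathrm{r}} \in \Hom(\OO_v^\times/\cA_v,H)$ of $\ord(\chi_{v,\mathrm{r}})\langle\chi_{v,\mathrm{r}},x_v\rangle / q_v^{s \cdot [\chi_{v,\mathrm{r}}\neq 1]}$. Since each individual ramified character (now viewed in $\Hom(\OO_v^\times/\cA_v,H) \subset \Hom(\OO_v^\times,H)$) has measure the reciprocal of the order of the full unramified subgroup, but our normalisation puts unit mass on $\Hom(k_v^\times/\OO_v^\times,H)$ rather than on $\Hom(\OO_v^\times,H)$, one must track carefully how the discrete sum over ramified characters is weighted — I expect the normalisation to make each ramified character count with weight exactly $1$ in the finite sum. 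Separating the trivial character (contributing $1$, with $\ord(\id)=1$) from the rest then yields $\widehat{f}_{\Lambda_v,H}(x_v;s) = 1 + q_v^{-s}\bigl(-1 + \sum_{\chi_v \in \Hom(\OO_v^\times/\cA_v,H)} \ord(\chi_v)\langle\chi_v,x_v\rangle\bigr) = 1 + s_{x,H}(v)/q_v^s$, as claimed.

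The main obstacle is bookkeeping around the Haar-measure normalisation: the measure is pinned down by $\vol(\Hom(k_v^\times/\OO_v^\times,H)) = 1$, and one needs to be scrupulous that, after integrating out the unramified direction, the induced counting measure on $\Hom(\OO_v^\times/\cA_v,H)$ is genuinely the counting measure with weight $1$ (and not, say, divided by $|\Hom(k_v^\times/\OO_v^\times,H)|$ or by $|H|$). The cleanest way to handle this is to observe that $\Hom(k_v^\times,H)$ is a finite group, write the integral as a normalised finite sum, and check the normalisation against the single known value $\vol(\Hom(k_v^\times/\OO_v^\times,H)) = 1$; the rest is the elementary splitting described above, parallel to the unweighted computation in \cite[\S3]{HNP2} but with the extra factor $\ord(\chi_{v,\mathrm{r}})$ coming from Lemma \ref{lem:ram_char}.
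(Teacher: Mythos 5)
Your proposal is correct and follows essentially the same route as the paper, which cites \cite[Lemma~3.8]{HNP2} to reduce the integral to a sum over $\Hom(\OO_v^\times,H)$ with weight $1$ on each character (exactly the Haar-measure bookkeeping you flag and resolve: each coset of the unit-mass subgroup $\Hom(k_v^\times/\OO_v^\times,H)$ contributes weight $1$, so the sum over ramified characters carries no extra normalising factor), and then invokes Lemma~\ref{lem:ram_char} for $\mathfrak{e}_v(\chi_v)=\ord(\chi_{v,\mathrm{r}})$ in the tame case. The separation of the trivial character and the identification $\Phi_v(\chi_v)=q_v^{[\chi_{v,\mathrm{r}}\neq 1]}$ then give the stated formula exactly as you describe.
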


\begin{proof}
Using Lemma~\ref{lem:unram_inv_norms} and following the proof of \cite[Lemma~3.8]{HNP2} gives
\[ \widehat{f}_{\Lambda_v,H}(x_v;s) =\sum_{\chi_v\in\Hom(\OO_v^\times,
    H)}\frac{f_{\Lambda_v}(\chi_v)\langle \chi_v,x_v\rangle}{\Phi(\chi_v)^s}.\]
Now mimic the start of the proof of \cite[Lemma~3.10]{HNP2} and apply
Lemma~\ref{lem:ram_char} to obtain the result.
\end{proof}

\subsection{Frobenian functions} \label{sec:frob}
We will analyse the global Fourier transforms using the theory of frobenian functions from Serre's book \cite[\S 3.3]{Ser12}. The parts of the theory relevant for us are also summarised in \cite[\S2]{HNP2}.
Recall that a \emph{class function} on a group is a function which is constant on conjugacy classes.

\begin{definition} \label{def:frob}
	Let $k$ be a number field and 
	$\rho: \Val_k \to \CC$ a function on the set of places of $k$.
	Let $S$ be a finite set of places of $k$.
	We say that $\rho$ is $S$-\emph{frobenian} if there exist 
	\begin{enumerate}
		\item[(a)] a finite Galois extension $K/k$, with Galois group $\Gamma$, such that $S$ contains all places which ramify in $K/k$, and
		\item[(b)] a class function $\varphi: \Gamma \to \CC$,
	\end{enumerate}
	such that for all $v \not \in S$ we have
	$$\rho(v) = \varphi(\Frob_v),$$
	where $\Frob_v \in \Gamma$ denotes the Frobenius element of $v$.
	We say that $\rho$ is
        \emph{frobenian} if it is $S$-frobenian
	for some $S$.
	A subset of $\Val_k$ is called ($S$-)\emph{frobenian} if its indicator function is ($S$-)frobenian.	
\end{definition}

In Definition \ref{def:frob}, we adopt a common abuse of notation (see \cite[\S3.2.1]{Ser12}), and denote by $\Frob_v \in \Gamma$ the choice of some element of the Frobenius conjugacy class at $v$; note that $\varphi(\Frob_v)$ is well defined as $\varphi$ is a class function.
We define the \emph{mean} of $\rho$ to be 
$$m(\rho) = \frac{1}{|\Gamma|} \sum_{\gamma \in \Gamma}\varphi(\gamma)\in\CC.$$ 

\subsection{Global Fourier transforms}
Our next aim is to show that the function $s_{x,H}$ from Lemma \ref{lem:fourier_transform} is frobenian. This will allow us to obtain analytic continuations of the corresponding global Fourier transforms (possibly with branch point singularities).

For $x_v\in\OO_v^\times\otimes\dual H$, we abuse notation slightly and write $x_v\in\OO_v^{\times d}\otimes\dual H$ if $x_v$ is in the image of the not necessarily injective map $\OO_v^{\times d}\otimes \dual H \to \OO_v^\times\otimes \dual H$. 
 
 \begin{lemma}\label{lem:dhv_frobenian}
   Let $x\in \OO_S^\times\otimes \dual H$ and let $e$ be the exponent of
   $H$. For $v\notin S$, let 
   \begin{align*}
	d_{x,H}(v)&=\max\{d \mid \gcd(e,q_v-1) :  x_v \in \OO_v^{\times d}\otimes \dual{H}\},     \\
	d_{\cA,H}(v)&=\max\{d\mid \gcd(e,q_v-1): \cA\bmod v \subseteq \F_v^{\times d}\}.
   \end{align*}
   Then any function $\Val_k\to\CC$ whose restriction to
   $\Val_k\smallsetminus S$ is either $d_{x,H}$ or $d_{\cA,H}$ is $S$-frobenian.
 \end{lemma}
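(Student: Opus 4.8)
The plan is to show that for each fixed divisor $d$ of $e$, the set of places $v \notin S$ with $d_{x,H}(v) = d$ (respectively $d_{\cA,H}(v) = d$) is a frobenian set, and then observe that any $\C$-valued function that is constant on these finitely many sets is frobenian. Since $\mathfrak{q}_v$ only enters through $q_v - 1 \bmod e$, which is governed by $\Frob_v$ in $\Gal(k(\mu_e)/k)$, the only real content is to express the divisibility conditions ``$x_v \in \O_v^{\times d}\otimes \dual H$'' and ``$\cA \bmod v \subseteq \F_v^{\times d}$'' in terms of splitting conditions in a fixed finite extension of $k$.

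First I would reduce to the case where $H$ is cyclic, hence where $\dual H$ is cyclic of order $m \mid e$; by functoriality and the fact that a finite sum/product of frobenian functions is frobenian, the general $H$ follows from writing $\dual H$ as a product of cyclic groups and noting that $x_v$ being a $d$-th power in $\O_v^\times \otimes \dual H$ is equivalent to its image in each cyclic factor being a $d$-th power. So fix a generator of $\dual H \cong \Z/m\Z$ and write $x = \alpha \otimes 1$ for some $\alpha \in \O_S^\times$ (a general element of $\O_S^\times \otimes \dual H$ is a sum of such, handled the same way). For $v \notin S$, tensoring the exact sequence $1 \to \mu_{q_v-1}(k_v) \to \O_v^\times \xrightarrow{\gcd} \O_v^\times \to \O_v^\times/(\O_v^\times)^{\gcd(d',q_v-1)} \to 1$-type reasoning shows that $\alpha \otimes 1 \in \O_v^{\times d}\otimes \Z/m\Z$ if and only if $\alpha$ is a $\gcd(d, q_v - 1)$-th power in $\O_v^\times$ up to the ambiguity coming from $m$-torsion; concretely, since $\O_v^\times/(\O_v^\times)^n \cong \Z/\gcd(n,q_v-1)\Z$ for $v \notin S$ (as the pro-$p$ part of $\O_v^\times$ is uniquely $n$-divisible for $p > |G|^2 \geq n$), the condition becomes: $\alpha$ is a $d_0$-th power in $\O_v^\times$ where $d_0 = \gcd(d, q_v - 1)$, which (for $v$ unramified in $k(\mu_d, \sqrt[d]{\alpha})$) is equivalent to $v$ splitting completely in $k(\mu_{d_0}, \sqrt[d_0]{\alpha})/k(\mu_{d_0})$. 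Running over all $d \mid e$, all these conditions together are detected by $\Frob_v$ in $\Gal(k(\mu_e, \sqrt[e]{\alpha})/k)$, so each level set $\{d_{x,H} = d\}$ is frobenian; the same argument with $\cA$ in place of $\langle \alpha \rangle$ handles $d_{\cA,H}$, replacing $k(\mu_e,\sqrt[e]{\alpha})$ by $k_e = k(\mu_e, \sqrt[e]{\cA})$.

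The main obstacle is the bookkeeping around the map $\O_v^{\times d} \otimes \dual H \to \O_v^\times \otimes \dual H$ not being injective: one must be careful that ``being in the image'' is genuinely equivalent to a power condition on $\alpha$ modulo the correct gcd, and in particular that the relevant gcd is $\gcd(d, q_v - 1)$ rather than $\gcd(dm, q_v-1)$ or similar — this is exactly why the truncation $d \mid \gcd(e, q_v - 1)$ appears in the definition. Once this local computation is pinned down, enlarging $S$ if necessary to contain all places ramifying in $k(\mu_e, \sqrt[e]{\cA})$ and using that the Frobenius at $v \notin S$ in this fixed Galois group determines $q_v \bmod e$ and all the splitting behaviour simultaneously, the frobenian property with Galois group $\Gamma = \Gal(k(\mu_e,\sqrt[e]{\cA})/k)$ (which contains $\Gal(k(\mu_e,\sqrt[e]{\alpha})/k)$ as a quotient for any $\alpha$ arising from $x$, after possibly further enlarging the field to accommodate all $x$ in the finitely generated group $\O_S^\times \otimes \dual H$) and an appropriate class function follows immediately.
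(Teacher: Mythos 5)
Your overall strategy --- partition the places by the value of $d_{x,H}$ (resp.\ $d_{\cA,H}$), and realise each level set as a Chebotarev condition in a fixed Kummer-theoretic extension of $k$ --- is the same as the paper's. The paper works directly with a presentation $\dual H \cong \Z/n_1\Z\oplus\cdots\oplus\Z/n_l\Z$, forms $K_d = k(\mu_d, x_1^{1/\gcd(d,n_1)},\ldots,x_l^{1/\gcd(d,n_l)})$, and reads off $d_{x,H}(v)$ from which $K_d$ the place $v$ splits completely in; the $d_{\cA,H}$ part with $k_d = k(\mu_d, \sqrt[d]{\cA})$ is handled identically and is cited from earlier work.

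The step you yourself flag as the ``main obstacle,'' however, is done incorrectly, and the error is not cosmetic. The image of $\O_v^{\times d}\otimes \Z/m\Z \to \O_v^\times \otimes \Z/m\Z = \O_v^\times/\O_v^{\times m}$ is $\O_v^{\times d}\O_v^{\times m}/\O_v^{\times m} = \O_v^{\times\gcd(d,m)}/\O_v^{\times m}$, so the correct criterion is $\alpha \in \O_v^{\times \gcd(d,m)}$, not $\alpha \in \O_v^{\times\gcd(d,q_v-1)}$. Since you range over $d \mid \gcd(e, q_v-1)$, your $\gcd(d,q_v-1)$ collapses to $d$, and the condition $\alpha\in\O_v^{\times d}$ is strictly stronger than $\alpha\in\O_v^{\times\gcd(d,m)}$ whenever $\gcd(d,m)<d$. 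For instance, with $\dual H \cong \Z/2\Z\times\Z/4\Z$, $x=(x_1,x_2)$ and $q_v\equiv 1\pmod 4$, at $d=4$ the correct condition is $x_1\in\O_v^{\times 2}$ and $x_2\in\O_v^{\times 4}$, whereas yours demands $x_1\in\O_v^{\times 4}$; carried through, your argument would prove frobeniality of a genuinely different function. The fix is exactly the paper's choice of $K_d$, extracting a $\gcd(d,n_i)$-th root of $x_i$ rather than a $d$-th root. (A smaller point: the reduction to cyclic factors via ``a sum/product of frobenian functions is frobenian'' is loosely phrased, since $d_{x,H}$ is neither a sum nor a product of the cyclic-factor data; the correct principle is that any function of finitely many frobenian invariants valued in a fixed finite set is frobenian, via a common refinement, and this does suffice.)
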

 \begin{proof}
   We choose a presentation $\dual H = \ZZ/n_1\ZZ \oplus\cdots\oplus\ZZ/n_l\ZZ$, thus identifying $x\in\OO_S^\times\otimes\dual H$ with a tuple $(x_1\OO_S^{\times n_1},\ldots,x_l\OO_S^{\times n_l})\in \OO_S^\times/\OO_S^{\times n_1}\oplus\cdots\oplus\OO_S^\times/\OO_S^{\times n_l}$.  
   Then $x_v\in \OO_v^{\times d}\otimes\dual H$ if and only if $x_i\in\OO_v^{\times (d,n_i)}$ for all $1\leq i\leq l$.

   For all $d\mid e$, let $K_{d} = k(\mu_d,x_1^{1/\gcd(d,n_1)},\ldots,x_l^{1/\gcd(d,n_l)})$, and define the sets
   \begin{equation*}
     \Upsilon_d = \Gal(K_e/K_d)\smallsetminus \bigcup_{\substack{d'\mid \frac{e}{d}\\d'\neq 1}}\Gal(K_e/K_{dd'})\subset \Gal(K_e/k).
   \end{equation*}
   The $\Upsilon_d$ form a partition of $\Gal(K_e/k)$, and each $\Upsilon_d$ is conjugacy invariant since the involved subgroups are normal. A place $v\notin S$ satisfies $\Frob_v\in\Upsilon_d$ if and only if $d$ is the largest divisor of $e$ such that $v$ splits completely in $K_d$. By the observations made at the start of this proof,  $v$ splits completely in $K_d$ if and only if $d\mid q_v-1$ and $x_v\in\OO_v^{\times d}\otimes\dual H$.
   
   Hence, if $\varphi:\Gal(K_e/k)\to\CC$ is the class function that takes the value $d$ on $\Upsilon_d$, then $d_{x,H}(v)=\varphi(\Frob_v)$ for all $v\notin S$.

   That $d_{\cA,H}$ is $S$-frobenian is proved in \cite[Lemma~3.11]{HNP2}. The proof is similar to the above and we briefly recall the relevant construction as we shall use it later.  For every $d\mid e$, we define the number field $k_d=k(\mu_d,\sqrt[d]{\mathcal{A}})$. The subsets
  \begin{equation*}
    \Sigma_d = \Gal(k_e/k_d)\smallsetminus\bigcup_{\substack{d'\mid \frac{e}{d}\\d'\neq 1}}\Gal(k_e/k_{dd'}) \subset \Gal(k_e/k)
  \end{equation*}
  are conjugacy invariant and form a partition of $\Gal(k_e/k)$. Let $\varphi : \Gal(k_e/k)\to\CC$ be the class function that takes the constant value $d$ on $\Sigma_d$, for all $d\mid e$. Then $d_{\mathcal{A},H}(v)=\varphi(\Frob_v)$ for all $v\notin S$, so in particular it is $S$-frobenian.
 \end{proof}
 
\begin{proposition}\label{prop:s_S-frob}
  Let $x\in\OO_S^\times\otimes \dual H$. Then any function $\Val_k\to\CC$ that
  sends $v\notin S$ to $s_{x,H}(v)$ is $S$-frobenian. Moreover,
  $s_{x,H}(v)\in\R$ for all $v\notin S$.
\end{proposition}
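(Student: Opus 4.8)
The plan is to show that $s_{x,H}$ agrees, away from $S$, with an explicit $\C$-linear combination of the frobenian functions $d_{x,H}$ and $d_{\cA,H}$ already produced in Lemma~\ref{lem:dhv_frobenian}, together with the constant function; since a finite linear combination of $S$-frobenian functions is $S$-frobenian (after enlarging the Galois extension to one containing all the relevant ones, e.g.\ $K_e \cdot k_e$), this gives the first assertion. Concretely, starting from
\begin{equation*}
  s_{x,H}(v) = -1 + \sum_{\chi_v\in\Hom(\OO_v^\times/\cA_v,H)}\ord(\chi_v)\,\langle \chi_v, x_v\rangle,
\end{equation*}
I would first note that for $v\notin S$ the group $\OO_v^\times$ maps onto $\F_v^\times$ with pro-$p$ kernel for $p = \characteristic \F_v$, and since $p > |G|^2$ by our choice of $S$, every $\chi_v\colon \OO_v^\times\to H$ factors through $\F_v^\times$; thus $\Hom(\OO_v^\times/\cA_v,H) = \Hom(\F_v^\times/(\cA\bmod v),H)$, a quotient of the cyclic group $\Hom(\F_v^\times, H)$ of order $\gcd(e,q_v-1)$. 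In particular the tame hypothesis of Lemma~\ref{lem:ram_char} applies, which is why $\ord(\chi_v)$ (rather than $|\im\chi_v|$) is the correct local factor.

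The heart of the computation is then an identity in the group ring of the finite cyclic group $C = \Hom(\F_v^\times,H)$, whose order $m = \gcd(e,q_v-1)$ depends only on $q_v \bmod {\rm lcm}$ of the $n_i$'s. The pairing $\langle -, x_v\rangle$ restricts to $C$ as a character whose order is exactly $m / d_{x,H}(v)$ (this is the content of the first paragraph of the proof of Lemma~\ref{lem:dhv_frobenian}: $x_v\in\OO_v^{\times d}\otimes\dual H$ is the annihilation condition), and the subgroup $\Hom(\F_v^\times/(\cA\bmod v),H)\subseteq C$ has order $d_{\cA,H}(v)$ — equivalently, index $m/d_{\cA,H}(v)$. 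Writing $\sum_{\chi\in\Hom(\F_v^\times/(\cA\bmod v),H)}\ord(\chi)\langle\chi,x_v\rangle$ as a sum over a subgroup of a cyclic group of a character times the order function, one evaluates it by grouping elements by order: for a cyclic group of order $N$ and a character of order $N/\delta$ on it, $\sum_{\chi}\ord(\chi)\langle\chi,x_v\rangle$ is a ratio of cyclotomic-type sums that collapses (using $\sum_{d\mid N}\phi(d) = N$ and orthogonality of characters on each layer) to a closed form depending only on $N$ and $\delta$, hence only on $d_{\cA,H}(v)$, $d_{x,H}(v)$ and $q_v\bmod\text{(something)}$. Since $v\mapsto q_v\bmod n$ is itself frobenian (it is $\varphi(\Frob_v)$ for the cyclotomic character $\Gal(k(\mu_n)/k)\to(\Z/n)^\times$), and $d_{x,H}$, $d_{\cA,H}$ are frobenian, the resulting expression for $s_{x,H}(v)$ is a class function of $\Frob_v$ in $\Gal(K_e\cdot k_e\cdot k(\mu_e)/k)$, giving $S$-frobenian. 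The reality claim $s_{x,H}(v)\in\R$ follows at once: the map $\chi_v\mapsto \chi_v^{-1}$ is an order-preserving involution of $\Hom(\OO_v^\times/\cA_v,H)$ under which $\langle\chi_v,x_v\rangle\mapsto\overline{\langle\chi_v,x_v\rangle}$, so the sum equals its own complex conjugate.

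The main obstacle is the bookkeeping in the group-ring identity — correctly identifying the order of the restricted pairing character as $m/d_{x,H}(v)$ and the size of the $\cA$-subgroup as $d_{\cA,H}(v)$, and then showing the resulting sum over $\chi$ depends on $v$ only through frobenian quantities. A clean way to organise this, rather than a brute-force cyclotomic sum evaluation, is: both $\Hom(\OO_v^\times/\cA_v,H)$ and the pairing with $x_v$ are pulled back from the finite quotient $\Gal(K_e k_e k(\mu_e)/k)$ via the same recipe as in Lemma~\ref{lem:dhv_frobenian}, so one defines directly a single class function $\varphi$ on that group by the formula $\varphi(\gamma) = -1 + \sum \ord(\chi)\langle\chi,x\rangle$ interpreted on the relevant finite quotient, and checks $\varphi(\Frob_v) = s_{x,H}(v)$ place by place using the split-completely characterisations already established — avoiding any explicit sum evaluation altogether. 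I would present it this second way, as it mirrors and reuses the structure of the proof of Lemma~\ref{lem:dhv_frobenian}.
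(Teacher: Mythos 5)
Your overall strategy is right and matches the paper's: show that for $v\notin S$ the quantity $s_{x,H}(v)$ is determined by the two frobenian quantities $d_{\cA,H}(v)$ and $d_{x,H}(v)$ from Lemma~\ref{lem:dhv_frobenian}, and hence is a class function of $\Frob_v$ on a fixed Galois group. Your reality argument (the involution $\chi_v\mapsto\chi_v^{-1}$ preserves $\ord$ and conjugates $\langle\chi_v,x_v\rangle$) is a clean, valid alternative to the paper's, which instead reads off reality from its explicit closed formula.

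However, the central calculation as you sketch it has a structural error that would block the proof. You assert that $C=\Hom(\F_v^\times,H)$ is ``the cyclic group of order $\gcd(e,q_v-1)$.'' This is false for non-cyclic $H$: since $\F_v^\times$ is cyclic of order $q_v-1$, we have $C\cong H[q_v-1]=H[\gcd(e,q_v-1)]$, which for $H=(\Z/2\Z)^2$ and $q_v$ odd is $(\Z/2\Z)^2$ — not cyclic, and of order $4$ rather than $2$. Consequently the ``identity in the group ring of a cyclic group'' and the claim that $\langle-,x_v\rangle$ restricts to a character of order $m/d_{x,H}(v)$ on $C$ do not make sense in general, and the proposed collapse of ``cyclotomic-type sums'' needs to be replaced. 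The paper handles this correctly by a different device: M\"obius inversion over $\Hom(\OO_v^\times/\cA_v,H)$ stratified by $\Ker\chi_v=\OO_v^{\times m}$, together with character orthogonality giving
\[
\sum_{\chi_v\in\Hom(\OO_v^\times/\OO_v^{\times d},H)}\langle \chi_v,x_v\rangle=
\begin{cases}|H[d]| & d\mid d_{x,H}(v),\\ 0 & \text{otherwise},\end{cases}
\]
which yields $s_{x,H}(v)=F(d_{\cA,H}(v),d_{x,H}(v))$ for a universal function $F$ depending on $H$ only through the integers $|H[d]|$ — no cyclicity of $\Hom(\F_v^\times,H)$ is used or needed. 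Your second, ``cleaner'' suggestion (define a class function on $\Gal(K_ek_e/k)$ directly and check $\varphi(\Frob_v)=s_{x,H}(v)$) is exactly the right framing, but as written it postpones rather than performs the essential step: one must still prove that $\sum_\chi\ord(\chi)\langle\chi,x_v\rangle$ depends on $v$ only through the frobenian data, and that is precisely what the M\"obius-plus-orthogonality computation accomplishes. So: keep the strategy and the reality argument, drop the cyclicity claim, and carry out the M\"obius inversion explicitly.
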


\begin{proof}
  For $v\notin S$, reduction modulo $v$ yields an isomorphism 
  $\Hom(\OO_v^\times,H)\cong \Hom(\F_v^\times,H)$. If $m\mid q_v-1$ is
  maximal with $\cA\bmod v\subseteq\F_v^{\times m}$, then \mbox{$\cA\bmod
  v=\F_v^{\times m}$}. This shows that any $\chi_v: \OO_v^\times/\cA_v \to H$
  has order dividing $d_{\cA,H}(v)$, and for $m\mid d_{\cA,H}(v)$, the set
  $\{\chi_v: \OO_v^\times/\cA_v \to H\ :\ \chi_v \textrm{ has order }
  m\}$ can be naturally identified with
  $\{\chi_v: \OO_v^\times\to H\ :\ \Ker \chi_v=\OO_v^{\times m}\}$. Now
  M\"{o}bius inversion yields \begin{align*} 1+s_{x,H}(v)&=\sum_{m\mid
      d_{\cA,H}(v)}m\sum_{\substack{\chi_v\in\Hom(\OO_v^\times,H)\\\Ker\chi_v=\OO_v^{\times
          m}}}\langle \chi_v,x_v\rangle\\ &=\sum_{m\mid
      d_{\cA,H}(v)}m\sum_{d\mid
      m}\mu\left(\frac{m}{d}\right)\sum_{\chi_v\in\Hom(\OO_v^\times/\OO_v^{\times
        d},H)}\langle \chi_v,x_v\rangle.
  \end{align*}
  Character orthogonality shows that for all $d\mid \gcd(e,q_v-1)$ we have
\begin{equation*}
  \sum_{\chi_v\in\Hom(\OO_v^\times/\OO_v^{\times d},H)}\langle \chi_v,x_v\rangle=
  \begin{cases}
    \lvert\Hom(\OO_v^\times/\OO_v^{\times d},H)\rvert =|H[d]| &\text{ if }d\mid d_{x,H}(v),\\
    0 &\text{ otherwise,}
  \end{cases}
\end{equation*}
where $d_{x,H}(v)$ was defined in Lemma \ref{lem:dhv_frobenian}. Letting
\begin{equation}\label{eq:sigma_frobenian_form_2}
  F(A,B)=-1+\sum_{m\mid A}m\sum_{d\mid \gcd(m,B)}\mu\left(\frac{m}{d}\right)|H[d]|,
\end{equation}
we have shown that $s_{x,H}(v)=F(d_{\cA,H}(v),d_{x,H}(v))$. This is $S$-frobenian, as the functions $v\mapsto d_{\cA,H}(v)$ and $v\mapsto d_{x,H}(v)$ are $S$-frobenian by Lemma~\ref{lem:dhv_frobenian}. From \eqref{eq:sigma_frobenian_form_2} it is also clear that $s_{x,H}(v)\in\R$.
\end{proof}

Again, we abuse notation
and denote by $\cA_v\otimes\dual{H}$ the image of the not necessarily injective
map $\cA_v\otimes\dual{H}\to k_v^{\times}\otimes\dual{H}$. For $v\notin S$, we have $\cA_v\otimes\dual{H}\subseteq\OO_v^\times\otimes\dual{H}$.

\begin{proposition}
Let $\varrho(k,H,\mathcal{A})$ be defined as in
Theorem~\ref{thm:local_norms_weighted_by_ramification} (with $G$ replaced by
$H$) and let $\varrho(k,H,\mathcal{A}, x)\in\R$ denote the mean of an
$S$-frobenian function given by $s_{x,H}$ from Proposition~\ref{prop:s_S-frob}.
\begin{enumerate}
\item\label{leq} For all $x\in\OO_S^\times\otimes \dual H$, we have $\varrho(k,H,\mathcal{A}, x)\leq \varrho(k,H,\mathcal{A}, 1)$, with equality if and only if $x_v\in \cA_v\otimes\dual{H}\ \forall v\notin S$.
\item\label{mean} We have $\varrho(k,H,\mathcal{A}, 1)=\varrho(k,H,\mathcal{A}).$
\end{enumerate}
\end{proposition}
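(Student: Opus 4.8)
The plan is to compute the mean $\varrho(k,H,\cA,x)$ of the $S$-frobenian function $v \mapsto s_{x,H}(v) = F(d_{\cA,H}(v), d_{x,H}(v))$ directly from the explicit formula \eqref{eq:sigma_frobenian_form_2}, and then compare with $\varrho(k,H,\cA,1)$ and $\varrho(k,H,\cA)$. First I would observe that the mean can be expressed as a density-weighted average: if $\delta_{m,n}$ denotes the density of the set of places $v\notin S$ with $d_{\cA,H}(v) = m$ and $d_{x,H}(v) = n$, then $\varrho(k,H,\cA,x) = \sum_{m,n \mid e} \delta_{m,n} F(m,n)$. From the proof of Proposition~\ref{prop:s_S-frob}, $F(m,n)$ is built from the Möbius-inverted quantity: $1 + F(m,n) = \sum_{d \mid m} \left(\sum_{d \mid d' \mid m} d' \mu(d'/d)\right)|H[d]|$, and one checks that $d \mapsto d$ is multiplicative so the inner sum over $d'$ is a multiplicative function of $m/d$ — in fact it equals $d \cdot \prod_{p \mid (m/d)}\ldots$ which simplifies. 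The key algebraic simplification I would aim for is to rewrite $1 + F(m, n) = \sum_{d \mid n}|H[d]| \cdot \varphi^{*}(m,d)$ for an appropriate arithmetic kernel, reducing everything to counting characters $\chi_v$ of $\OO_v^\times/\cA_v$ of order exactly $d$ with $d \mid d_{x,H}(v)$.

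For part \eqref{mean}, I would specialise to $x = 1$, where $d_{x,H}(v) = e$ for all $v \notin S$ (since $1 \in \OO_v^{\times e} \otimes \dual H$ always), so the constraint $d \mid d_{x,H}(v)$ is vacuous. Then $1 + s_{1,H}(v) = \sum_{\chi_v \in \Hom(\OO_v^\times/\cA_v, H)} \ord(\chi_v)$. Averaging over $v$: the density of $v$ with a given value of $d_{\cA,H}(v) = m$ is controlled by the field $k_m = k(\mu_m, \sqrt[m]{\cA})$ via the partition $\Sigma_m$ from Lemma~\ref{lem:dhv_frobenian}, and $m(\one_{\Sigma_m}) = [\Gal(k_e/k_m) : \text{next}]/\ldots$, ultimately giving that the density of $\{v : m \mid d_{\cA,H}(v)\}$ is $1/[k_m : k]$. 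Reorganising the sum over characters by order and using Möbius inversion on the nested fields $k_d$, I would match the result term-by-term with $\varrho(k,H,\cA) = \sum_{g \in H \setminus\{\id\}} \frac{\ord g}{[k_{\ord g} : k]}$; the bijection is between nontrivial $g \in H$ of order $d$ and the "slots" contributed by order-$d$ characters $\chi_v : \F_v^\times \to H$, each weighted by $\ord g = d$ and occurring with density $1/[k_d : k]$. This is essentially the computation already done in \cite{HNP2} for the unweighted case but with each character contributing $\ord(\chi_v)$ instead of $1$; I would cite \cite[Lemma~3.11]{HNP2} or its proof for the density computations and supply the extra weighting.

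For part \eqref{leq}, the inequality $\varrho(k,H,\cA,x) \le \varrho(k,H,\cA,1)$ should follow from a pointwise-in-$v$ comparison after averaging: I would show $s_{x,H}(v) \le s_{1,H}(v)$ for every $v \notin S$. Indeed $1 + s_{x,H}(v) = \sum_{\chi_v} \ord(\chi_v)\langle\chi_v, x_v\rangle$ where the sum is over $\chi_v \in \Hom(\OO_v^\times/\cA_v, H)$; since the subsum over each fixed-order subset is, by the orthogonality computation in Proposition~\ref{prop:s_S-frob}, either the full character count (when $d \mid d_{x,H}(v)$) or $0$, and these are the same nonnegative terms appearing in $1 + s_{1,H}(v)$ but now possibly with some killed, we get $s_{x,H}(v) \le s_{1,H}(v)$ pointwise, hence the inequality of means. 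For the equality case, equality of means of a frobenian function dominated by another forces equality of the class functions on a positive-density set, but since both are genuinely frobenian (locally constant on Frobenius classes), equality of means in fact forces $s_{x,H}(v) = s_{1,H}(v)$ for \emph{all} $v \notin S$ (every conjugacy class of the relevant Galois group is hit with positive density by Chebotarev). Then $s_{x,H}(v) = s_{1,H}(v)$ for all $v$ means $d \mid d_{x,H}(v)$ whenever $d \mid d_{\cA,H}(v)$, i.e. $d_{\cA,H}(v) \mid d_{x,H}(v)$; unwinding the definitions via the presentation $\dual H = \bigoplus \ZZ/n_i\ZZ$ and $x \leftrightarrow (x_i \OO_S^{\times n_i})_i$, this translates into: for all $v \notin S$, $\cA \bmod v \subseteq \F_v^{\times m} \implies x_i \in \OO_v^{\times (m,n_i)}$ for all $i$, which is precisely the assertion $x_v \in \cA_v \otimes \dual H$ for all $v \notin S$ (the "only if" direction being the content of the standard local--global principle for the nested radical extensions). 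I expect the main obstacle to be bookkeeping: carefully matching the combinatorial identity \eqref{eq:sigma_frobenian_form_2} against the group-theoretic sum $\sum_{g \in H\setminus\{\id\}} \ord g / [k_{\ord g}:k]$ and getting the Möbius inversion over the tower $\{k_d\}_{d \mid e}$ to come out exactly right, together with justifying the "equality of means $\Rightarrow$ equality everywhere" step, which needs that the relevant Galois group is generated so that every class contributes.
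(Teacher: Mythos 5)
Your argument matches the paper's proof in both parts: for \eqref{leq} the paper likewise deduces the pointwise inequality $s_{x,H}(v)\leq s_{1,H}(v)$ (with equality iff $\langle\chi_v,x_v\rangle=1$ for all $\chi_v\in\Hom(\OO_v^\times/\cA_v,H)$, i.e.\ $x_v\in\cA_v\otimes\dual H$) from character orthogonality and then passes to means via frobeniannity/Chebotarev, and for \eqref{mean} it likewise identifies $1+s_{1,H}(v)=\sum_{h\in H[d_{\cA,H}(v)]}\ord h$ and averages over the partition $\Sigma_d$ of $\Gal(k_e/k)$ by inclusion--exclusion to land on $\sum_{h\in H}\ord h/[k_{\ord h}:k]$. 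One small slip: for $x=1$ one has $d_{1,H}(v)=\gcd(e,q_v-1)$ rather than $e$, but since $d_{\cA,H}(v)\mid\gcd(e,q_v-1)$ the constraint $d\mid d_{1,H}(v)$ is still automatic and your computation goes through unchanged.
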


\begin{proof}
Part~\eqref{leq} is clear since, by definition of $s_{x,H}$ and character orthogonality, for all $v\notin
S$ we have $s_{x,H}(v)\leq s_{1,H}(v)$, with equality if and only if
$\langle\chi_v, x_v \rangle=1$ for all
$\chi_v\in\Hom(\OO_v^\times/\cA_v, H)$.
For part~\eqref{mean}, writing $k_d=k(\mu_d,\sqrt[d]{\cA})$, it suffices to
show that the mean of $s_{1,H}(v)+1$ equals $\sum_{h\in H}\frac{\ord
  h}{[k_{\ord h}:k]}$. By definition, for $v\notin S$ we have
\[s_{1,H}(v)+1=\sum_{\chi_v: \OO_v^\times/\cA_v \to H}\ord(\chi_v).\]
We have
$\Hom (\OO_v^\times/\cA_v, H)\cong\Hom (\F_v^\times/(\cA\bmod v) , H)\cong H[d_{\cA, H}(v)]$
as abelian groups, where $d_{\cA,H}(v)$ is as in Lemma \ref{lem:dhv_frobenian}. Hence,
\[s_{1,H}(v)+1=\sum_{h\in H[d_{\cA,H}(v)]}\ord h.\]
We now recall from the proof of Lemma \ref{lem:dhv_frobenian} that the function $d_{\cA,H}(v)$ is $S$-frobenian  with associated Galois group $\Gal(k_e/k)$, where $e=\exp(H)$, determined by the subsets  $\Sigma_d$. Using inclusion-exclusion, the mean of 
$s_{1,H}(v)+1$ equals
\begin{align*}
\frac{1}{[k_e:k]}\sum_{d\mid e}|\Sigma_d|\sum_{h\in H[d]}\ord h &= \frac{1}{[k_e:k]}\sum_{d\mid e}\sum_{c\mid\frac{e}{d}}\mu(c)[k_e:k_{cd}]\sum_{h\in H[d]}\ord h \\
&= \sum_{d\mid e}\sum_{c\mid \frac{e}{d}}\frac{\mu(c)}{[k_{cd}:k]}\sum_{h\in H[d]}\ord h\\
&=\sum_{f\mid e}\frac{1}{[k_f:k]}\sum_{d\mid f}\mu(f/d)\sum_{h\in H[d]}\ord h\\
&=\sum_{f\mid e}\frac{f|H_f|}{[k_f:k]},
\end{align*}
where $H_f$ denotes the set of elements of $H$ of order $f$. It is clear that \[\sum_{f\mid e}\frac{f|H_f|}{[k_f:k]}=\sum_{h\in H}\frac{\ord h}{[k_{\ord h}:k]},\] whence the claim.
\end{proof}

\begin{proposition}\label{prop:analytic_properties}
  Let $x\in \OO_S^\times\otimes \dual H$. The Fourier transform satisfies
  \begin{equation*}
    \widehat{f}_{\Lambda,H}(x;s)=\zeta_k(s)^{\varrho(k,H,\cA,x)}G(H,x;s),\quad \re s>1,
  \end{equation*}
  for a holomorphic function $G(s)=G(H,x;s)$ on the region
  \begin{equation*}
    \re s > 1-\frac{c}{\log(\lvert \Im s|+3)},
  \end{equation*}
  for some $c>0$, satisfying in this region the bound
  \begin{equation*}
    |G(s)|\ll (1+\lvert \Im s|)^{1/2}.
  \end{equation*}
  Moreover,
  \begin{equation*}
    \lim_{s\to 1}(s-1)^{\varrho(k,H,\cA,x)}\widehat{f}_{\Lambda,H}(x;s)=(\Res_{s=1}\zeta_k(s))^{\varrho(k,H,\cA,x)}\prod_{v\in\Omega_k}\frac{\widehat{f}_{\Lambda_v,H}(x_v;1)}{\zeta_{k,v}(1)^{\varrho(k,H,\cA,x)}},
  \end{equation*}
  where $\zeta_{k,v}(s)$ denotes the Euler factor of the Dedekind zeta function $\zeta_k(s)$ of $k$ at $v$ if $v$ is finite, and $\zeta_{k,v}(s)=1$ otherwise. When $x=1$ this limit is non-zero.
\end{proposition}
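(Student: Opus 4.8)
The plan is to realise the global Fourier transform as an Euler product over the places $v$ of $k$, using \eqref{eqn:Euler_product}, and to compare this product term-by-term with a suitable power of the Dedekind zeta function $\zeta_k(s) = \prod_v \zeta_{k,v}(s)$, where $\zeta_{k,v}(s) = (1-q_v^{-s})^{-1}$ at finite $v$. First I would split $\Omega_k$ into the finite set $S$ and its complement. For $v \in S$ the local Fourier transform $\widehat f_{\Lambda_v,H}(x_v;s)$ is an entire function of $s$ (a finite sum over the finite group $\Hom(k_v^\times, H)$, with $\Phi_v(\chi_v)^{-s}$ bounded), nonvanishing and holomorphic near $s=1$, so these factors only contribute a harmless holomorphic, nonvanishing prefactor. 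For $v \notin S$, Lemma~\ref{lem:fourier_transform} gives the clean formula $\widehat f_{\Lambda_v,H}(x_v;s) = 1 + s_{x,H}(v) q_v^{-s}$, and by Proposition~\ref{prop:s_S-frob} the function $v \mapsto s_{x,H}(v)$ is $S$-frobenian and real-valued, with mean $\varrho(k,H,\cA,x)$ by the preceding proposition.

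The key step is then a standard Dirichlet-series factorisation: one writes
\begin{equation*}
  \widehat f_{\Lambda,H}(x;s) = \prod_{v\in S}\widehat f_{\Lambda_v,H}(x_v;s)
  \cdot \prod_{v\notin S}\bigl(1 + s_{x,H}(v)q_v^{-s}\bigr)
  = \zeta_k(s)^{\varrho(k,H,\cA,x)} \, G(H,x;s),
\end{equation*}
where $G(H,x;s)$ is defined as the quotient; the point is that $G$ extends holomorphically past $\re s = 1$. This is exactly the situation covered by the analytic machinery for frobenian Euler products: since $s_{x,H}$ is frobenian with mean $\varrho := \varrho(k,H,\cA,x)$, the product $\prod_{v\notin S}(1+s_{x,H}(v)q_v^{-s})$ has the same singularity at $s=1$ as $\prod_{v\notin S}(1-q_v^{-s})^{-\varrho} = \zeta_k(s)^\varrho \cdot (\text{holomorphic, nonvanishing near }\re s=1)$, because $\log(1+s_{x,H}(v)q_v^{-s}) - (-\varrho)\log(1-q_v^{-s}) = (s_{x,H}(v)-\varrho)q_v^{-s} + O(q_v^{-2\re s})$ and the Dirichlet series $\sum_v (s_{x,H}(v)-\varrho)q_v^{-s}$, being the log-derivative-type series of a frobenian function of mean $\varrho$ subtracted off, extends holomorphically to a region $\re s > 1 - c/\log(|\Im s|+3)$ with polynomial growth, by the zero-free region for Hecke $L$-functions of the relevant Artin representations attached to the frobenian function (cf.~\cite[\S2]{HNP2}, \cite[\S3.3]{Ser12}). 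Summing the remaining $O(q_v^{-2\re s})$ terms gives an absolutely convergent piece on $\re s > 1/2$. Exponentiating yields the holomorphy of $G$ on the stated region and the bound $|G(s)| \ll (1+|\Im s|)^{1/2}$; the exponent $1/2$ comes from the convexity bound for the finitely many Hecke $L$-functions involved, as in the proof of \cite[Prop.~3.9]{HNP2}.

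For the limit formula, once the factorisation is established one simply computes
\begin{equation*}
  \lim_{s\to 1}(s-1)^{\varrho}\widehat f_{\Lambda,H}(x;s)
  = \Bigl(\lim_{s\to1}(s-1)\zeta_k(s)\Bigr)^{\varrho} G(H,x;1)
  = (\Res_{s=1}\zeta_k(s))^{\varrho}\prod_{v\in\Omega_k}\frac{\widehat f_{\Lambda_v,H}(x_v;1)}{\zeta_{k,v}(1)^{\varrho}},
\end{equation*}
where the last equality just unwinds the definition of $G$ as the regularised Euler product and uses that the product over all $v$ (finite and archimedean, with $\zeta_{k,v}(1)=1$ for archimedean $v$ by convention) converges; here one must check that at $v\notin S$ the local factor $\widehat f_{\Lambda_v,H}(x_v;1)/\zeta_{k,v}(1)^{\varrho} = (1+s_{x,H}(v)q_v^{-1})(1-q_v^{-1})^{\varrho}$ is $1 + O(q_v^{-2})$ on average over frobenian classes, which is precisely what makes the regularised product converge. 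Finally, when $x=1$ we have $\varrho = \varrho(k,H,\cA)$ and $s_{1,H}(v)+1 = \sum_{\chi_v}\ord\chi_v \geq 1 > 0$, so every local factor $\widehat f_{\Lambda_v,H}(1;1) = 1 + s_{1,H}(v)q_v^{-1} > 0$ is strictly positive (and each $v\in S$ factor is positive as it counts $\sum_{\chi_v}\mathfrak e_v(\chi_v)\Phi_v(\chi_v)^{-1}>0$), whence the limit is a convergent product of positive reals times $(\Res_{s=1}\zeta_k(s))^\varrho > 0$, hence nonzero.

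The main obstacle I expect is not the algebraic bookkeeping but the analytic input: establishing that the frobenian Dirichlet series $\sum_{v\notin S}(s_{x,H}(v)-\varrho)q_v^{-s}$ continues to the region $\re s > 1 - c/\log(|\Im s|+3)$ with the claimed polynomial growth. This rests on decomposing the class function underlying $s_{x,H}$ into irreducible characters of $\Gal(K_e/k)$, expressing the series via logarithms of the corresponding Artin $L$-functions (which are Hecke $L$-functions here since the relevant fields are abelian over the base after the construction in Lemma~\ref{lem:dhv_frobenian}), and invoking the standard zero-free region and convexity bounds for these — all of which is exactly parallel to \cite[Prop.~3.9]{HNP2}, so in practice one cites that argument rather than redoing it.
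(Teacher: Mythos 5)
Your proposal takes essentially the same route as the paper: split the Euler product at $S$, control the finite product over $v\in S$, and reduce the infinite part to the general analytic result on frobenian Euler products from \cite{HNP2} (the paper invokes \cite[Prop.~2.3]{HNP2} directly, whereas you sketch the underlying Hecke $L$-function argument and then note one would cite it). The only detail the paper makes explicit that you omit is the bound $s_{x,H}(v)\leq |G|^2-1<q_v$ coming from the choice of $S$ in \S\ref{sec:Mob}, which ensures the local factors $1+s_{x,H}(v)q_v^{-s}$ are nonvanishing near $\re s=1$ so that the logarithmic comparison you describe is legitimate.
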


\begin{proof}
Recall the Euler product \eqref{eqn:Euler_product}.
  The same argument as in \cite[Lemma 3.6]{HNP2} shows that each single Euler factor $\widehat{f}_{\Lambda_v}(x_v;s)$ satisfies $\widehat{f}_{\Lambda_v}(x_v;s)\ll_{k,H}1$ on $\re(s)>0$ and $\widehat{f}_{\Lambda_v}(1;s)>0$ for $s\in\R$. We use these facts to control the finite product $\prod_{v\in S}\widehat{f}_{\Lambda_v}(x_v;s)$.
  
  The proposition is then an application of \cite[Prop 2.3]{HNP2} to the Euler product
  $\prod_{v\notin S}\widehat{f}_{\Lambda_v,H}(x_v;s)$,
  which satisfies the hypotheses of \cite[Prop 2.3]{HNP2} by Lemma \ref{lem:fourier_transform}, Proposition \ref{prop:s_S-frob} and the fact that $s_{x,H}(v)\leq |G|^2-1 < q_v$ by our assumption on $S$ in \S\ref{sec:Mob} (cf.~\cite[Prop 3.16]{HNP2}). 
 \end{proof}

\subsection{Proof of the asymptotic formula in Theorem \ref{thm:local_norms_weighted_by_ramification} }
Recall from  \eqref{eq:def_F_s} the Dirichlet series $F(s)$ relevant to Theorem \ref{thm:local_norms_weighted_by_ramification}.
Putting our results from \eqref{eqn:Mob} and \eqref{eq:poisson} together, we have
\begin{equation}\label{eq:F_s_expression}
  F(s)=\sum_{H\subset G}\frac{\mu(G/H)}{|\OO_k^\times\otimes \dual H|}\sum_{x\in\OO_S^\times\otimes\dual H}\widehat{f}_{\Lambda,H}(x;s),\quad\re s>1,
\end{equation}
and the analytic properties of the Fourier transforms
$\widehat{f}_{\Lambda,H}(x;s)$ were summarised in Proposition
\ref{prop:analytic_properties}.
For every proper subgroup $H\lneq G$, we see immediately from
\eqref{eq:def_varrho_k_G} that
\begin{equation}\label{eq:varrho_H_smaller_G}
\varrho(k,H,\cA)<\varrho(k,G,\cA).
\end{equation}
An application of the Selberg--Delange method \cite[Thm II.5.3]{Ten15} now gives the following (cf.~\cite[Prop 3.19]{HNP2}). 

\begin{proposition}\label{prop:counting_result}
  There exists $\delta=\delta(k,G,\cA)>0$ such that
  \begin{equation*}
    \sum_{\substack{\varphi\in\gextk \\ \Phi(\varphi)\leq B \\
		 \varphi_v \in \Lambda_v \forall v \in S}} \prod_v \mathfrak{e}_v(\varphi) = c_{k,G,\Lambda}B(\log B)^{\varrho(k,G,\cA)-1}+O(B(\log B)^{\varrho(k,G,\cA)-1-\delta}),
  \end{equation*}
  where
  \begin{equation*}
    c_{k,G,\Lambda}=\frac{1}{\Gamma(\varrho(k,G,\cA))|\OO_k^\times\otimes \dual G|}\sum_{\substack{x\in\OO_S^\times\otimes\dual G\\\varrho(k,G,\cA, x)=\varrho(k,G,\cA)}}\lim_{s\to 1}(s-1)^{\varrho(k,G,\cA)}\widehat{f}_{\Lambda,G}(x;s).
  \end{equation*}
\end{proposition}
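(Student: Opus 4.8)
The plan is to apply the Selberg--Delange method to the Dirichlet series $F(s)$ as expressed in \eqref{eq:F_s_expression}, exactly as in the analogous step \cite[Prop.~3.19]{HNP2}. The starting point is the decomposition of $F(s)$ into a finite sum over subgroups $H \subset G$ and over $x \in \OO_S^\times \otimes \dual H$ of the Fourier transforms $\widehat{f}_{\Lambda,H}(x;s)$, whose analytic behaviour is governed by Proposition \ref{prop:analytic_properties}: each such term equals $\zeta_k(s)^{\varrho(k,H,\cA,x)}$ times a function holomorphic and of polynomial growth in a standard zero-free-region-type domain $\re s > 1 - c/\log(|\Im s|+3)$. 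First I would identify the term (or terms) of largest polar order at $s=1$. By Proposition 2.8 (the inequality $\varrho(k,H,\cA,x)\le\varrho(k,H,\cA,1)=\varrho(k,H,\cA)$, with equality iff $x_v\in\cA_v\otimes\dual H$ for all $v\notin S$) together with the strict inequality $\varrho(k,H,\cA)<\varrho(k,G,\cA)$ for proper subgroups $H\lneq G$ recorded in \eqref{eq:varrho_H_smaller_G}, the dominant contribution comes solely from $H=G$ and from those $x\in\OO_S^\times\otimes\dual G$ with $\varrho(k,G,\cA,x)=\varrho(k,G,\cA)$; all other terms have strictly smaller polar order, hence contribute an error of size $O(B(\log B)^{\varrho(k,G,\cA)-1-\delta})$ for a suitable $\delta>0$ depending only on the finitely many exponents $\varrho(k,H,\cA,x)$ that occur.

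Next I would assemble, for the dominant $H=G$ terms, a single Dirichlet series of the form $\zeta_k(s)^{\varrho(k,G,\cA)}G(s)$ with $G(s)$ holomorphic and polynomially bounded in the relevant region, whose coefficients are the arithmetic function $\varphi\mapsto f_\Lambda(\varphi) = \one_\Lambda(\varphi)\prod_v\mathfrak{e}_v(\varphi)$ restricted to the relevant sub-$G$-extensions; note $\varrho(k,G,\cA)$ need not be an integer, which is precisely why the Selberg--Delange method (rather than a Tauberian theorem for integer-order poles) is required. I would then invoke \cite[Thm.~II.5.3]{Ten15} to extract the main term $c_{k,G,\Lambda}B(\log B)^{\varrho(k,G,\cA)-1}$ with a power-saving error term, the constant being
\begin{equation*}
c_{k,G,\Lambda} = \frac{1}{\Gamma(\varrho(k,G,\cA))}\lim_{s\to 1}(s-1)^{\varrho(k,G,\cA)}\frac{1}{|\OO_k^\times\otimes\dual G|}\sum_{\substack{x\in\OO_S^\times\otimes\dual G\\\varrho(k,G,\cA,x)=\varrho(k,G,\cA)}}\widehat{f}_{\Lambda,G}(x;s),
\end{equation*}
which is the asserted formula; the subdominant $H\lneq G$ terms and the non-dominant $x$ contribute only to the error. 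Finally I would note that the sum on the right-hand side is finite (only finitely many $x$ survive the condition, and each limit exists and is finite by Proposition \ref{prop:analytic_properties}), so $c_{k,G,\Lambda}$ is a well-defined nonnegative real number; positivity is not claimed here and will be addressed separately.

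The main obstacle is purely bookkeeping of the growth and uniformity hypotheses needed to feed the whole collection of Euler products into the Selberg--Delange machinery simultaneously: one must check that the holomorphic factors $G(H,x;s)$ from Proposition \ref{prop:analytic_properties} satisfy a uniform bound of the shape $\ll(1+|\Im s|)^{1/2}$ on a common region, that the finitely many Euler factors at places in $S$ (handled via the estimates $\widehat{f}_{\Lambda_v}(x_v;s)\ll 1$ on $\re s>0$ borrowed from \cite[Lemma~3.6]{HNP2}) do not spoil this, and that the exponents $\varrho(k,H,\cA,x)$ take values in a controlled range so that a single $\delta>0$ works for the error term. All of this is structurally identical to \cite[Prop.~3.19]{HNP2}, so the argument is routine once Propositions \ref{prop:s_S-frob}, 2.8 and \ref{prop:analytic_properties} are in place; no genuinely new difficulty arises from the ramification weights beyond the already-computed change in singularity type.
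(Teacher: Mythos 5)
Your proposal is correct and takes essentially the same approach as the paper: the paper itself dispatches this proposition as ``an application of the Selberg--Delange method \cite[Thm II.5.3]{Ten15}'' and cites \cite[Prop.~3.19]{HNP2}, and your argument is precisely the expansion of that one-line reference, correctly isolating the dominant terms $H=G$ with $\varrho(k,G,\cA,x)=\varrho(k,G,\cA)$ via Proposition~\ref{prop:analytic_properties}, the inequality $\varrho(k,H,\cA,x)\le\varrho(k,H,\cA)$, and \eqref{eq:varrho_H_smaller_G}, then reading off the stated constant from the Selberg--Delange asymptotic.
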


Proposition \ref{prop:counting_result} shows the validity of the asymptotic formula in Theorem \ref{thm:local_norms_weighted_by_ramification}, so all that remains is to study the leading constant $c_{k,G,\Lambda}$ and prove its positivity under certain assumptions.

\subsection{The leading constant in Theorem \ref{thm:local_norms_weighted_by_ramification}	}
We first determine which $x$ contribute to the leading constant.  Recall that
for $x\in k^\times\otimes\dual G$, we denote by $x_v$ the image of $x$ under
$k^\times\otimes\dual G \to k_v^\times\otimes\dual G$, and by
$\cA_v\otimes\dual{G}$ the image of the map $\cA_v\otimes\dual{G}\to k_v^{\times}\otimes\dual{G}$.  
Recall from \cite[Lemma~3.20]{HNP2} the set $\mathcal{X} (k, G, \cA)$ defined as
$$\mathcal{X}(k,G,\mathcal{A}) = \{ x \in k^\times \otimes \dual{G} :
	x_v \in \mathcal{A}_v \otimes \dual{G} \text{ for all but finitely many } v\}.$$
	In \emph{loc.~cit.}~it is shown that $\mathcal{X}(k,G,\mathcal{A})$ is finite
	and 
	\begin{equation}\label{eq:X}
	\mathcal{X}(k,G,\mathcal{A}) = \{ x \in \OO_S^\times \otimes \dual{G} :
	x_v \in \mathcal{A}_v \otimes \dual{G} \text{ for all } v \notin S\}.
	\end{equation}

\begin{lemma}\label{lem:XkG}
  For $x\in\OO_S^\times\otimes \dual G$ we have
    \begin{equation*}
    \varrho(k,G,\cA,x) = \varrho(k,G,\cA) \Longleftrightarrow x\in\mathcal{X} (k, G, \cA).
  \end{equation*}
  Moreover, $\widehat{f}_{\Lambda_v,G}(x_v;1)=\widehat{f}_{\Lambda_v,G}(1;1)$ for $x \in \mathcal{X}(k, G, \cA)$ and $v\notin S$.
\end{lemma}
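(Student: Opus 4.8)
The plan is to deduce both assertions directly from results already established, since the lemma is essentially a bookkeeping step. For the equivalence, I would use parts~\eqref{leq} and~\eqref{mean} of the Proposition comparing $\varrho(k,H,\cA,x)$ with $\varrho(k,H,\cA,1)$, taken with $H=G$: part~\eqref{mean} gives $\varrho(k,G,\cA,1)=\varrho(k,G,\cA)$, and part~\eqref{leq} gives $\varrho(k,G,\cA,x)\leq\varrho(k,G,\cA,1)$ with equality precisely when $x_v\in\cA_v\otimes\dual G$ for all $v\notin S$. By the explicit description \eqref{eq:X} of $\mathcal{X}(k,G,\cA)$, this equality locus is exactly $\mathcal{X}(k,G,\cA)$, which yields the claimed equivalence.

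For the second assertion, fix $x\in\mathcal{X}(k,G,\cA)$ and $v\notin S$; I would reduce it to the identity $s_{x,G}(v)=s_{1,G}(v)$. By Lemma~\ref{lem:fourier_transform} we have $\widehat{f}_{\Lambda_v,G}(x_v;1)=1+s_{x,G}(v)/q_v$ and $\widehat{f}_{\Lambda_v,G}(1;1)=1+s_{1,G}(v)/q_v$, so this identity suffices. By \eqref{eq:X}, $x_v$ lies in the image of the map $\cA_v\otimes\dual G\to k_v^\times\otimes\dual G$; since every $\chi_v\in\Hom(\OO_v^\times/\cA_v,G)$ is trivial on $\cA_v$, it pairs trivially with $x_v$, so $\langle\chi_v,x_v\rangle=1$ for every such $\chi_v$ --- this is exactly the equality criterion already recorded in the proof of part~\eqref{leq}. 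Substituting $\langle\chi_v,x_v\rangle=1$ into the formula for $s_{x,G}(v)$ from Lemma~\ref{lem:fourier_transform} collapses the sum to $-1+\sum_{\chi_v\in\Hom(\OO_v^\times/\cA_v,G)}\ord(\chi_v)=s_{1,G}(v)$, proving the claim. (The same computation in fact shows $\widehat{f}_{\Lambda_v,G}(x_v;s)=\widehat{f}_{\Lambda_v,G}(1;s)$ for all $s$, but only $s=1$ is needed here.)

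The closest thing to an obstacle, and it is minor, is checking the interaction of the pairing $\langle\cdot,\cdot\rangle$ with the not-necessarily-injective map $\cA_v\otimes\dual G\to k_v^\times\otimes\dual G$: one needs that any element in the image of this map pairs trivially with every $\chi_v\in\Hom(\OO_v^\times,G)$ that is trivial on $\cA_v$. This is immediate from the definition of the pairing via Pontryagin duality of finite abelian groups, since a decomposable tensor $a\otimes\xi$ with $a\in\cA_v$ pairs to $\xi(\chi_v(a))=\xi(\id_G)=1$. Otherwise the argument is just a combination of the above Proposition, Lemma~\ref{lem:fourier_transform}, and the description \eqref{eq:X}, so I do not anticipate any genuine difficulty.
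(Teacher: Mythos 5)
Your proposal is correct and follows essentially the same route as the paper: the first assertion is a combination of parts~\eqref{leq} and~\eqref{mean} of the unnumbered Proposition on $\varrho(k,H,\cA,x)$ together with the description~\eqref{eq:X} of $\mathcal{X}(k,G,\cA)$, and the second assertion reduces to $s_{x,G}(v)=s_{1,G}(v)$ for $v\notin S$ via the formula in Lemma~\ref{lem:fourier_transform}. The only cosmetic difference is that you cite part~\eqref{leq} directly for the equivalence, whereas the paper re-derives the pointwise inequality $s_{x,G}(v)\leq s_{1,G}(v)$ in the proof of Lemma~\ref{lem:XkG} itself (this is because the paper wants the intermediate statement~\eqref{eq:sigmas_equal}, which it then reuses for the Fourier-transform equality); both are valid, and your reading of the pairing on decomposable tensors is the right justification for the vanishing $\langle\chi_v,x_v\rangle=1$.
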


\begin{proof}
  Let $x\in\OO_S^\times\otimes \dual G$ and $v\notin S$. Recall that $s_{x,G}(v)\in\R$ by Proposition~\ref{prop:s_S-frob}. From the definition of $s_{x,G}(v)$ in
  Lemma~\ref{lem:fourier_transform}, we see that $s_{x,G}(v)\leq s_{1,G}(v)$,
  with equality if and only if $\langle\chi_v,x_v\rangle =1$ for all
  $\chi_v\in\Hom(\OO_v^\times/\cA_v,G)$. 
  The latter condition holds if and only if $x_v\in\cA_v\otimes \dual G$. 
  Hence, we see that
  \begin{equation}\label{eq:sigmas_equal}
    s_{x,G}(v) = s_{1,G}(v)\text{ for all }v\notin S \Longleftrightarrow x\in\mathcal{X} (k, G, \cA).
  \end{equation}
The equivalence in the lemma follows, as $s_{x,G}(v)\leq s_{1,G}(v)$ holds for
all $v\notin S$ and both functions are $S$-frobenian by Proposition
\ref{prop:s_S-frob}. 

The equality of Fourier transforms follows from Lemma~\ref{lem:fourier_transform} and \eqref{eq:X}.
\end{proof}

\begin{theorem} \label{thm:main_constant}
  Under the assumptions of Theorem \ref{thm:local_norms_weighted_by_ramification} and the additional assumptions on $S$ imposed at the start of \S \ref{sec:Mob}, the leading constant $c_{k,G,\Lambda}$ has the form
  \begin{align*}
    c_{k,G,\Lambda} = &\frac{(\Res_{s=1}\zeta_k(s))^{\varrho(k,G,\cA)}}{\Gamma(\varrho(k,G,\cA))|\OO_k^\times\otimes\dual G||G|^{|S_f|}}\prod_{v\notin S}\left(\sum_{\substack{\chi_v\in\Hom(\OO_v^\times,G)\\ \cA_v\subset\Ker\chi_v}}\frac{\ord\chi_v}{\Phi_v(\chi_v)}\right)\zeta_{k,v}(1)^{-\varrho(k,G,\cA)}\\
    &\times \sum_{\substack{\chi\in\Hom(\prod_{v\in S}k_v^\times,G)\\ \chi_v\in\Lambda_v \forall v\in S}}\left(\prod_{v\in S}\frac{\mathfrak{e}_v(\chi_v)}{\Phi_v(\chi_v)\zeta_{k,v}(1)^{\varrho(k,G,\cA)}}\right)\sum_{x\in\mathcal{X} (k, G, \cA)}\prod_{v\in S}\langle \chi_v,x_v\rangle,
  \end{align*}
where $S_f$ denotes the set of non-archimedean places in $S$, and the product over $v\notin S$ is non-zero. Moreover $c_{k,G,\Lambda} >0$ if there exists a sub-$G$-extension of $k$ which realises the given local conditions for all places $v$.
\end{theorem}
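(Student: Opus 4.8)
The plan is to start from the expression for $c_{k,G,\Lambda}$ in Proposition~\ref{prop:counting_result}, namely as a sum over $x$ in the finite set $\{x\in\OO_S^\times\otimes\dual G : \varrho(k,G,\cA,x)=\varrho(k,G,\cA)\}$ of the limits $\lim_{s\to1}(s-1)^{\varrho(k,G,\cA)}\widehat f_{\Lambda,G}(x;s)$, divided by $\Gamma(\varrho(k,G,\cA))|\OO_k^\times\otimes\dual G|$. By Lemma~\ref{lem:XkG}, the set of contributing $x$ is exactly $\mathcal X(k,G,\cA)$, so we may replace the index set by $\mathcal X(k,G,\cA)$. Next I would insert the formula for each limit from the last display of Proposition~\ref{prop:analytic_properties}: for $x\in\mathcal X(k,G,\cA)$ we have $\varrho(k,G,\cA,x)=\varrho(k,G,\cA)$, so
\begin{equation*}
  \lim_{s\to1}(s-1)^{\varrho(k,G,\cA)}\widehat f_{\Lambda,G}(x;s)
  =(\Res_{s=1}\zeta_k(s))^{\varrho(k,G,\cA)}\prod_{v\in\Omega_k}\frac{\widehat f_{\Lambda_v,G}(x_v;1)}{\zeta_{k,v}(1)^{\varrho(k,G,\cA)}}.
\end{equation*}

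The bulk of the work is then a bookkeeping reorganisation of this Euler product, splitting it as $\prod_{v\in S}$ times $\prod_{v\notin S}$ and summing over $x$. For $v\notin S$, Lemma~\ref{lem:XkG} gives $\widehat f_{\Lambda_v,G}(x_v;1)=\widehat f_{\Lambda_v,G}(1;1)$, which is independent of $x$, so the infinite part of the product factors out of the $x$-sum; using Lemma~\ref{lem:fourier_transform} together with Lemma~\ref{lem:ram_char} one identifies $\widehat f_{\Lambda_v,G}(1;1)=1+s_{1,G}(v)/q_v=\sum_{\chi_v\in\Hom(\OO_v^\times,G),\,\cA_v\subset\Ker\chi_v}\ord(\chi_v)/\Phi_v(\chi_v)$ (note $\Phi_v(\chi_v)=q_v$ when $\chi_v$ is ramified and $=1$ otherwise, and $\ord\chi_v=\mathfrak e_v(\chi_v)$ in the tame case which holds for $v\notin S$), giving the stated product over $v\notin S$. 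For $v\in S$, we must express $\widehat f_{\Lambda_v,G}(x_v;1)$ back in terms of characters: unwinding the definition of the local Fourier transform with the normalisation $\vol(\Hom(k_v^\times/\OO_v^\times,G))=1$ and Lemma~\ref{lem:ram_char}, one gets $\widehat f_{\Lambda_v,G}(x_v;1)=\tfrac{1}{|G|^{[v\in S_f]}}\sum_{\chi_v\in\Hom(k_v^\times,G),\,\chi_v\in\Lambda_v}\mathfrak e_v(\chi_v)\langle\chi_v,x_v\rangle/\Phi_v(\chi_v)$ — here the factor $|G|^{-|S_f|}$ comes from the measure of $\Hom(\OO_v^\times,G)$ relative to the chosen normalisation at finite $v$ (archimedean $v$ contribute no such factor by the convention $\OO_v=k_v$). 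Substituting, pulling the product over $v\in S$ out in front, and interchanging the order of the finite sums over $x$ and over $(\chi_v)_{v\in S}$ produces exactly the claimed double sum $\sum_{\chi}\bigl(\prod_{v\in S}\mathfrak e_v(\chi_v)/(\Phi_v(\chi_v)\zeta_{k,v}(1)^{\varrho})\bigr)\sum_{x\in\mathcal X}\prod_{v\in S}\langle\chi_v,x_v\rangle$. I would take care that the $\zeta_{k,v}(1)^{-\varrho}$ factors are distributed so that those for $v\notin S$ sit inside the infinite product (where they are needed for convergence) and those for $v\in S$ sit in the finite $\chi$-sum, matching the displayed formula.

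For the non-vanishing of the product over $v\notin S$: each factor equals $\widehat f_{\Lambda_v,G}(1;1)\zeta_{k,v}(1)^{-\varrho(k,G,\cA)}$, and $\widehat f_{\Lambda_v,G}(1;1)>0$ since it is a sum of non-negative terms including the term $1$ from the trivial character (this is the positivity statement already recorded in the proof of Proposition~\ref{prop:analytic_properties}); convergence of the infinite product is exactly the content of Proposition~\ref{prop:analytic_properties} applied at $s=1$ via the bound $s_{x,G}(v)\le|G|^2-1<q_v$, so the product is a nonzero real number. Finally, for the last sentence — positivity of $c_{k,G,\Lambda}$ when there is a global sub-$G$-extension realising all the local conditions — I would argue as follows. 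All prefactors ($\Res_{s=1}\zeta_k(s)$, $\Gamma(\varrho)$, $|\OO_k^\times\otimes\dual G|$, $|G|^{|S_f|}$, the product over $v\notin S$) are strictly positive, so it suffices to show the double sum over $\chi$ and $x$ is strictly positive. Interchanging the two sums, the inner sum over $\chi$ for a fixed $x$ is $\prod_{v\in S}\bigl(\sum_{\chi_v\in\Lambda_v}\mathfrak e_v(\chi_v)\langle\chi_v,x_v\rangle/(\Phi_v(\chi_v)\zeta_{k,v}(1)^\varrho)\bigr)$, and summing this over $x\in\mathcal X(k,G,\cA)$; for $x=1$ the inner sum is $\prod_{v\in S}\widehat f_{\Lambda_v,G}(1;1)|G|^{[v\in S_f]}\zeta_{k,v}(1)^{-\varrho}$, which is strictly positive provided each $\Lambda_v$ is nonempty — and nonemptiness of every $\Lambda_v$ is guaranteed by the hypothesis that a global sub-$G$-extension realising the local conditions exists (its local components lie in the $\Lambda_v$). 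One must still rule out cancellation from the other $x\in\mathcal X(k,G,\cA)$: the cleanest route is to observe that $c_{k,G,\Lambda}$ is, by Proposition~\ref{prop:counting_result}, the leading coefficient of a Dirichlet series with non-negative coefficients (the summand $\prod_v\mathfrak e_v(\varphi)$ is a positive integer and the stated local conditions hold for at least one $\varphi\in\gextk$ by hypothesis), hence $c_{k,G,\Lambda}\ge0$ automatically and is $>0$ as soon as the count on the left-hand side is unbounded; the latter follows because once one $G$-extension $\varphi_0$ satisfying the conditions exists, infinitely many do (twist $\varphi_0$ at auxiliary split primes not in $S$, using that for $v\notin S$ the local condition $\Lambda_v$ contains all unramified sub-$G$-extensions by construction). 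I expect this last point — passing from "one admissible extension exists" to "$c_{k,G,\Lambda}>0$" without delicate control of the oscillatory $x$-sum — to be the main obstacle, and the Dirichlet-series-positivity argument is the way I would sidestep it.
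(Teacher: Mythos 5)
The reorganisation of the Euler product and the bookkeeping leading to the displayed formula for $c_{k,G,\Lambda}$ are correct and follow the same route as the paper (which defers to \cite[Thm 3.22]{HNP2}): the identification of the index set with $\mathcal{X}(k,G,\cA)$ via Lemma~\ref{lem:XkG}, the substitution of the limit formula from Proposition~\ref{prop:analytic_properties}, the identification of the Euler factor at $v\notin S$ via Lemmas~\ref{lem:fourier_transform} and~\ref{lem:ram_char}, and the $|G|^{-|S_f|}$ coming from unwinding the Haar measure at $v\in S$ are all as intended.

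The positivity argument, however, contains a genuine gap, and you yourself flag it as the ``main obstacle''. Your proposed resolution---that $c_{k,G,\Lambda}\geq 0$ automatically because the underlying Dirichlet series has non-negative coefficients, and is then $>0$ ``as soon as the count on the left-hand side is unbounded''---does not work. Unboundedness of the count only tells you that the asymptotic is not identically zero; it is entirely consistent with $c_{k,G,\Lambda}=0$ together with a genuine main term of strictly smaller order, say $\asymp B(\log B)^{\varrho-2}$, which still tends to infinity. What one actually needs is a lower bound of the maximal order $B(\log B)^{\varrho(k,G,\cA)-1}$, and your ``twist at one auxiliary prime'' construction gives nowhere near enough extensions for that. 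So the Dirichlet-series-positivity sidestep does not close the gap you correctly identified.

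The argument that does work (and is the one the paper refers to, in \cite[\S 3.8]{HNP2}) exploits the group structure of $\mathcal{X}(k,G,\cA)$: it is a finite \emph{subgroup} of $k^\times\otimes\dual G$, and for each fixed $\chi$ the map $x\mapsto\prod_{v\in S}\langle\chi_v,x_v\rangle$ is a character of this group, so by orthogonality the inner sum $\sum_{x\in\mathcal{X}(k,G,\cA)}\prod_{v\in S}\langle\chi_v,x_v\rangle$ equals either $|\mathcal{X}(k,G,\cA)|$ or $0$. Thus every summand in the $\chi$-sum is non-negative and there is no cancellation to worry about, so one is reduced to exhibiting a single $\chi$ contributing positively. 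Take $\chi=(\varphi_{0,v})_{v\in S}$ for the hypothesised global sub-$G$-extension $\varphi_0$. The local conditions are satisfied by assumption, and the product formula $\prod_{v\in\Omega_k}\langle\varphi_{0,v},x_v\rangle=1$ for $x\in k^\times\otimes\dual G$, combined with $\langle\varphi_{0,v},x_v\rangle=1$ for $v\notin S$ (because $x_v\in\cA_v\otimes\dual G$ and $\cA$ is locally a norm from the corresponding extension), shows that $\prod_{v\in S}\langle\varphi_{0,v},x_v\rangle=1$ for all $x\in\mathcal{X}(k,G,\cA)$. Hence that term contributes $|\mathcal{X}(k,G,\cA)|>0$ and $c_{k,G,\Lambda}>0$. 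In short: do not interchange the $\chi$- and $x$-sums; use orthogonality of the $x$-sum for fixed $\chi$ and the product formula rather than a counting lower bound.
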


\begin{proof}
	The formula follows from writing the global Fourier transform as a product of local Fourier transforms, and applying Lemma~\ref{lem:fourier_transform}, Proposition \ref{prop:analytic_properties}, Proposition~\ref{prop:counting_result}, and Lemma \ref{lem:XkG} (cf.~\cite[Thm 3.22]{HNP2}). It remains to show that $c_{k,G,\Lambda}>0$ if there is a sub-$G$-extension $\varphi:\Gal(\bar{k}/k)\to G$ that satisfies $\varphi_v\in\Lambda_v$ for all $v\in S$. The argument is exactly the same as in \cite[\S 3.8]{HNP2} and hence omitted.
\end{proof}

Theorem \ref{thm:local_norms_weighted_by_ramification} follows immediately from
Proposition \ref{prop:counting_result} and Theorem \ref{thm:main_constant}.

\section{Distribution of genus numbers}
In this section we use Theorem \ref{thm:local_norms_weighted_by_ramification} to prove Theorem \ref{thm:genus}. We begin by studying some basic properties of genus numbers.

\subsection{Genus numbers}
For a Galois extension $K/k$ of number fields and places $v\in\Val_k$ and
$w_0\in \Val_K$ with $w_0\mid v$, we have the following equalities of subsets of $k_v^\times$:
\begin{equation*}
  \Norm_{K/k}\prod_{w\mid v}\OO_{K,w}^\times =
  \Norm_{K_{w_0}/k_v}\OO_{K,w_0}^\times = \OO_v^\times\cap
  \Norm_{K_{w_0}/k_v}K_{w_0}^\times = \OO_v^\times\cap \Norm_{K/k}\prod_{w\mid v}K_w^\times.
\end{equation*}
Hence, being the norm of an id\`ele of $K$ or of an \emph{integral} id\`ele of $K$ is
the same for integral id\`eles of $k$. The following result is a special case of the main theorem of \cite{Fur67}. It gives a purely adelic interpretation of the genus number. 

\begin{lemma}[{Furuta, \cite[\S5]{Fur67}}] \label{lem:genus_formula}
	Let $K/k$ be abelian. Then
	$$\mathfrak{g}_{K/k} =  \frac{h(k) \prod_{v \in \Val_k} \mathfrak{e}_v(K)}{[K:k][\O_k^\times : \O_k^\times \cap \Norm_{K/k} \prod_{w \in \Val_K} \O_{K,w}^\times ]}$$
	where $\mathfrak{e}_v(K)$ denotes the ramification index of a place of $K$ above $v$.
\end{lemma}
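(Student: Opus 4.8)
The plan is to express the genus number via class field theory and reduce everything to computing the order of a cokernel. By definition, the genus group is $\Gal(\mathfrak{G}_{K/k}/K)$, which is a quotient of $\Cl(K)\cong\Gal(H_K/K)$, so $\mathfrak{g}_{K/k}=|\Cl(K)|/[\mathfrak{G}_{K/k}:K]^{-1}\cdot\ldots$; more precisely $\mathfrak{g}_{K/k}=|\Cl(K)|/|\Gal(H_K/\mathfrak{G}_{K/k})|$. The key observation is that $\mathfrak{G}_{K/k}$ is the maximal subextension of $H_K/K$ that is abelian over $k$, hence corresponds under the Artin map to the subgroup of $\Cl(K)$ generated by all commutators $[\sigma,c]$ for $\sigma$ lifting $\Gal(K/k)$ and $c\in\Gal(H_K/K)$ — equivalently, since $\Gal(K/k)$ is abelian here, $\Gal(H_K/\mathfrak{G}_{K/k})$ is the augmentation-type submodule $I_{\Gal(K/k)}\cdot\Cl(K)$ (the subgroup generated by $\gamma\cdot c - c$). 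So the first step is: $\mathfrak{g}_{K/k}=|\Cl(K)/I_{\Gal(K/k)}\Cl(K)|=|(\Cl(K))_{\Gal(K/k)}|$, the order of the coinvariants.

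The second step is to compute these coinvariants idelically. Using the idelic description $\Cl(K)=\Adele_K^\times/(K^\times\prod_w\OO_{K,w}^\times K_\infty^{\circ})$ (or, keeping archimedean ramification via the convention that $\C/\R$ ramifies, the narrow/appropriate variant) and the norm map $\Norm_{K/k}\colon\Adele_K^\times\to\Adele_k^\times$, one identifies the $\Gal(K/k)$-coinvariants of $\Cl(K)$ with a quotient living over $k$. Concretely, I would use the exact sequence relating $\Cl(K)_{\Gal(K/k)}$ to the image of $\Norm_{K/k}$ on idele class groups together with Furuta's cohomological computation; the norm index $[\OO_k^\times:\OO_k^\times\cap\Norm_{K/k}\prod_w\OO_{K,w}^\times]$ appears precisely as the contribution of global units to a Herbrand-quotient / snake-lemma computation, while $\prod_v\mathfrak{e}_v(K)$ appears as $|\prod_v\OO_v^\times/\Norm(\prod_w\OO_{K,w}^\times)|$ — indeed by the local computation each local factor $\OO_v^\times/\Norm_{K_w/k_v}\OO_{K,w}^\times$ has order $\mathfrak{e}_v(K)$ (this is exactly Lemma~\ref{lem:ram_char}, which gives $|\OO_v^\times/\Norm_{K_w/k_v}\OO_{K,w}^\times|=\mathfrak{e}_v(K)$), and $[K:k]$ and $h(k)$ enter through $[\Adele_k^\times:K^\times$-image$]$-type indices and the class number of $k$. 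Assembling the indices in the resulting multiplicative exact sequence yields the displayed formula.

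The cleanest route, and the one I would actually write, is simply to invoke \cite{Fur67}: Furuta proves a general genus-theory formula, and the stated identity is the specialisation of his main theorem to the abelian case once one uses the identification of norms of integral ideles with norms of ideles established in the displayed chain of equalities just before the lemma. So the proof reduces to (i) citing Furuta's theorem in its general form, (ii) checking that his hypotheses are met for abelian $K/k$ (automatic), and (iii) translating his notation into ours — in particular verifying that his ``unit norm index'' is our $[\OO_k^\times:\OO_k^\times\cap\Norm_{K/k}\prod_w\OO_{K,w}^\times]$ and that his ramification product is $\prod_v\mathfrak{e}_v(K)$ with the $\C/\R$ convention at infinity.

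The main obstacle is the bookkeeping of archimedean places and of the precise flavour of class group (ordinary vs. narrow). With the convention that $\C/\R$ is ramified, a real place $v$ of $k$ lying below a complex place of $K$ contributes a factor $\mathfrak{e}_v(K)=2$, and one must check this is consistent with how Furuta normalises things and with the unit-norm index (a unit that is negative at such a $v$ fails to be a norm there). I expect this to be entirely routine once one is careful, but it is the one place where signs and conventions could go wrong, so I would state explicitly that we use the convention fixed after Definition~\ref{def:genus} and that Furuta's formula applies verbatim under it.
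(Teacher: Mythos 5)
Your ultimate recommendation --- cite Furuta --- is exactly what the paper does; the lemma is stated as a special case of the main theorem of \cite{Fur67} with no proof supplied, beyond the short chain of equalities directly above it reconciling norms of integral id\`eles with norms of id\`eles intersected with $\O_v^\times$. So your third and fourth paragraphs match the paper's treatment.

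The from-scratch sketch in your first two paragraphs, however, contains a real error. You identify the principal genus $\Gal(H_K/\mathfrak{G}_{K/k})$ with the subgroup generated by commutators $[\sigma,c]$ for $\sigma\in\Gal(H_K/k)$ and $c\in\Gal(H_K/K)$, and claim that since $\Gal(K/k)$ is abelian this equals $I_{\Gal(K/k)}\Cl(K)$, so that $\mathfrak{g}_{K/k}=|\Cl(K)_{\Gal(K/k)}|$. This is false in general. The principal genus is the \emph{full} commutator subgroup $[\Gal(H_K/k),\Gal(H_K/k)]$; abelianness of $\Gal(K/k)$ forces commutators $[\tilde\sigma_1,\tilde\sigma_2]$ of two lifts to land in $\Cl(K)$, but not in $I_{\Gal(K/k)}\Cl(K)$ (the obstruction is an alternating pairing on $\Gal(K/k)$ valued in $\Cl(K)_{\Gal(K/k)}$, which is nonzero for Heisenberg-type extensions). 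What $I_{\Gal(K/k)}\Cl(K)$ cuts out is the \emph{central} class field of $K/k$, which contains $\mathfrak{G}_{K/k}$ and coincides with it only when $\Gal(K/k)$ is cyclic. So your step one computes the central class number, not the genus number, and the two already differ for $G=(\Z/2\Z)^2$, well within the scope of the paper.

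A correct direct argument avoids coinvariants altogether: by class field theory over $k$, the genus field corresponds to the norm group $k^\times\Norm_{K/k}\bigl(\prod_{w}\O_{K,w}^\times\bigr)\subseteq\Adele_k^\times$, this being the smallest open finite-index subgroup containing $k^\times$ whose class field is unramified over $K$ (the condition ``contains $K$'' is then automatic). One computes $[\Adele_k^\times:k^\times\Norm_{K/k}(\prod_w\O_{K,w}^\times)]$ through the chain $k^\times\Norm_{K/k}(\prod_w\O_{K,w}^\times)\subseteq k^\times\prod_v\O_v^\times\subseteq\Adele_k^\times$, using $[\Adele_k^\times:k^\times\prod_v\O_v^\times]=h(k)$, the local index $[\O_v^\times:\Norm_{K_w/k_v}\O_{K,w}^\times]=\mathfrak{e}_v(K)$ from Lemma~\ref{lem:ram_char}, and the second isomorphism theorem to extract $[\O_k^\times:\O_k^\times\cap\Norm_{K/k}\prod_w\O_{K,w}^\times]$; dividing by $[K:k]$ converts $[\mathfrak{G}_{K/k}:k]$ to $\mathfrak{g}_{K/k}$. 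Your instincts about the archimedean conventions and the role of Lemma~\ref{lem:ram_char} are sound; it is the reduction to $\Cl(K)_{\Gal(K/k)}$ that would derail the argument.
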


We have the following simple observations.

\begin{lemma} \label{lem:third_iso}
	Let $e$ be the exponent of $G$. Then we have 
	$$[\O_k^\times : \O_k^\times \cap \Norm_{K/k} \prod_{w \in \Val_K} \O_{K,w}^\times ] = 
	[\O_k^\times/\O_k^{\times e} : (\O_k^\times \cap \Norm_{K/k} \prod_{w \in \Val_K} \O_{K,w}^\times)/\O_k^{\times e} ].$$
\end{lemma}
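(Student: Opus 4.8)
The plan is to observe that the subgroup $\O_k^{\times e}$ is already contained in the norm group $\Norm_{K/k}\prod_{w\in\Val_K}\O_{K,w}^\times$, after which the claimed equality of indices is a formal consequence of the third isomorphism theorem.

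First I would reduce to a purely local statement. By the chain of equalities displayed just before Lemma~\ref{lem:genus_formula}, an element $u\in k^\times$ lies in $\Norm_{K/k}\prod_{w\in\Val_K}\O_{K,w}^\times$ if and only if, for every place $v$ of $k$, one has $u\in\O_v^\times$ and $u\in\Norm_{K_{w_0}/k_v}K_{w_0}^\times$ for some (hence any) place $w_0$ of $K$ above $v$. For $u\in\O_k^\times$ the condition $u\in\O_v^\times$ holds at every $v$, so it suffices to show that $u^e$ is a local norm at every $v$.

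Next I would invoke local class field theory at each $v$. Since $K/k$ is abelian and $\Gal(K/k)$ is annihilated by $e$ (in our setting $\Gal(K/k)\cong G$), the decomposition group $\Gal(K_{w_0}/k_v)$ is a subgroup of $\Gal(K/k)$ and is therefore also annihilated by $e$; the local reciprocity isomorphism $k_v^\times/\Norm_{K_{w_0}/k_v}K_{w_0}^\times\cong\Gal(K_{w_0}/k_v)$ then shows $(k_v^\times)^e\subseteq\Norm_{K_{w_0}/k_v}K_{w_0}^\times$. Applying this to $u\in\O_k^\times\subseteq k_v^\times$ at every place $v$ yields
\[
\O_k^{\times e}\subseteq\O_k^\times\cap\Norm_{K/k}\prod_{w\in\Val_K}\O_{K,w}^\times.
\]

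Finally, setting $A=\O_k^\times$, $A'=\O_k^\times\cap\Norm_{K/k}\prod_{w\in\Val_K}\O_{K,w}^\times$ and $N=\O_k^{\times e}$, the inclusions $N\subseteq A'\subseteq A$ give $[A:A']=[A/N:A'/N]$ by the third isomorphism theorem, which is precisely the asserted identity. There is no genuine obstacle here; the only step requiring any care is the containment $\O_k^{\times e}\subseteq\Norm_{K/k}\prod_w\O_{K,w}^\times$, and even that reduces at once to the remark that $e$-th powers in $k_v^\times$ are local norms because the abelian decomposition group at $v$ is killed by $e$.
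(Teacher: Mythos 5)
Your proof is correct and follows essentially the same route as the paper: the paper's one-line proof is precisely the third isomorphism theorem combined with the observation that every element of $\O_k^{\times e}$ is everywhere locally a norm (for which the paper cites \cite[Lemma~4.4]{HNP2}, while you spell out the standard local-class-field-theory argument yourself).
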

\begin{proof}
	Third isomorphism theorem, since every element of $\O_k^{\times e}$
	is everywhere locally a norm by local class field theory, as observed in~\cite[Lemma~4.4]{HNP2}.
\end{proof}

\begin{lemma}\label{lem:subgroup_indicator} 
	Let $A$ be a finite group and $B \subset A$ a subgroup. Then
	$$\sum_{a \in A} \one_B(a) = |B| = |A|/[A:B].$$
      \end{lemma}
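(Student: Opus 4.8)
The plan is to treat this as an immediate unwinding of definitions followed by Lagrange's theorem. First I would recall that $\one_B\colon A\to\{0,1\}$ is the indicator function, defined by $\one_B(a)=1$ if $a\in B$ and $\one_B(a)=0$ otherwise; hence $\sum_{a\in A}\one_B(a)$ counts exactly the elements of $A$ lying in $B$, which is by definition $|B|$. Here the finiteness of $A$, which is part of the hypothesis, guarantees that the sum is a finite nonnegative integer and that this count is meaningful. Second, I would invoke Lagrange's theorem for the subgroup $B\subset A$, which gives $|A|=|B|\cdot[A:B]$ and hence $|B|=|A|/[A:B]$, completing the chain of equalities.

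There is essentially no obstacle here: the statement is a bookkeeping identity recorded for later reference, to be applied in combination with Lemma~\ref{lem:third_iso} and Lemma~\ref{lem:genus_formula} when extracting the index term appearing in Furuta's formula. The only points worth remarking are that $A$ must be finite for the count on the left to be well defined, and that $[A:B]$ is then automatically finite, both of which hold by assumption. I would therefore present the argument in a single sentence.
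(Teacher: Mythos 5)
Your proof is correct and matches the paper, which simply records this lemma as ``Immediate.'' You have merely spelled out the two obvious steps (counting via the indicator function and applying Lagrange's theorem) that the authors leave unstated.
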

\begin{proof}
Immediate.
\end{proof}

We use these as follows. For a finitely generated subgroup $\mathcal{A} \subset
k^\times$, we let $f_\mathcal{A}$ be the indicator function for $G$-extensions
from which every element of $\mathcal{A}$ is everywhere locally a norm.
For cyclic subgroups $\langle\epsilon\rangle$, we
abbreviate this to $f_\epsilon=f_{\langle\epsilon\rangle}$. Then, writing $e$ for the exponent of $G$, Lemmas~\ref{lem:third_iso} and \ref{lem:subgroup_indicator} give
$$ \sum_{\epsilon \in \O_k^\times/\O_k^{\times e}} f_{\epsilon}(K)
= \frac{[\O_k^\times : \O_k^{\times e}]}{[\O_k^\times : \O_k^\times \cap \Norm_{K/k} \prod_{w \in \Val_K} \O_{K,w}^\times ]}.$$
(Note that $f_{\epsilon}$ is well defined since every element of $\O_k^{\times e}$ is everywhere locally a norm, see~\cite[Lemma~4.4]{HNP2}.)

Thus from Lemma \ref{lem:genus_formula} we obtain the following, which is the expression  we will
use to study the average value of the genus number. 

\begin{proposition} \label{prop:genus_formula}
	We have
	$$\sum_{\substack{\Gal(K/k)\cong G \\ \Phi(K/k) \leq B}} \mathfrak{g}_{K/k}  =
	\frac{h(k)}{[K:k][\O_k^\times : \O_k^{\times e}]}\sum_{\epsilon \in \O_k^\times/\O_k^{\times e}} 
	\sum_{\substack{\Gal(K/k)\cong G \\ \Phi(K/k) \leq B}} f_{\epsilon}(K)\prod_{v \in \Val_k} \mathfrak{e}_v(K).$$
\end{proposition}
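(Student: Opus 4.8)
The plan is to combine Furuta's adelic formula (Lemma~\ref{lem:genus_formula}) with the two elementary observations (Lemmas~\ref{lem:third_iso} and~\ref{lem:subgroup_indicator}) already recorded. First I would start from Lemma~\ref{lem:genus_formula}, which expresses $\mathfrak{g}_{K/k}$ as
\[
\mathfrak{g}_{K/k} = \frac{h(k)\prod_{v\in\Val_k}\mathfrak{e}_v(K)}{[K:k]\,[\O_k^\times : \O_k^\times\cap\Norm_{K/k}\prod_{w\in\Val_K}\O_{K,w}^\times]}.
\]
The key idea is to replace the unit-index in the denominator by a sum over $\epsilon\in\O_k^\times/\O_k^{\times e}$. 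By Lemma~\ref{lem:third_iso}, the denominator index equals $[\O_k^\times/\O_k^{\times e} : (\O_k^\times\cap\Norm_{K/k}\prod_w\O_{K,w}^\times)/\O_k^{\times e}]$, so by Lemma~\ref{lem:subgroup_indicator} applied with $A=\O_k^\times/\O_k^{\times e}$ and $B=(\O_k^\times\cap\Norm_{K/k}\prod_w\O_{K,w}^\times)/\O_k^{\times e}$ we get
\[
\sum_{\epsilon\in\O_k^\times/\O_k^{\times e}} f_\epsilon(K) = |B| = \frac{|A|}{[A:B]} = \frac{[\O_k^\times:\O_k^{\times e}]}{[\O_k^\times : \O_k^\times\cap\Norm_{K/k}\prod_{w\in\Val_K}\O_{K,w}^\times]},
\]
exactly as displayed before the statement. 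Here one must check that $f_\epsilon(K)$, the indicator that $\epsilon$ is everywhere locally a norm from $K/k$, coincides with the indicator of membership of the class of $\epsilon$ in $B$: this is precisely the content of the reduction at the start of the subsection, using that norms of integral id\`eles and of arbitrary id\`eles agree on integral id\`eles of $k$, and that $\O_k^{\times e}$ consists of everywhere-local norms so that $f_\epsilon$ is well defined on $\O_k^\times/\O_k^{\times e}$.

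Next I would substitute this identity into Furuta's formula: solving for $1/[\O_k^\times : \O_k^\times\cap\Norm_{K/k}\prod_w\O_{K,w}^\times]$ gives
\[
\frac{1}{[\O_k^\times : \O_k^\times\cap\Norm_{K/k}\prod_w\O_{K,w}^\times]} = \frac{1}{[\O_k^\times:\O_k^{\times e}]}\sum_{\epsilon\in\O_k^\times/\O_k^{\times e}} f_\epsilon(K),
\]
so that
\[
\mathfrak{g}_{K/k} = \frac{h(k)}{[K:k]\,[\O_k^\times:\O_k^{\times e}]}\left(\sum_{\epsilon\in\O_k^\times/\O_k^{\times e}} f_\epsilon(K)\right)\prod_{v\in\Val_k}\mathfrak{e}_v(K).
\]
Finally I would sum over all $K$ with $\Gal(K/k)\cong G$ and $\Phi(K/k)\le B$, pulling the $K$-independent constant $h(k)/([K:k][\O_k^\times:\O_k^{\times e}])$ out of the sum (note $[K:k]=|G|$ is constant) and interchanging the finite inner sum over $\epsilon$ with the sum over $K$, which yields the claimed formula.

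This is essentially a bookkeeping argument, so there is no serious obstacle; the only point requiring care — and the one I would state explicitly — is the identification of $f_\epsilon(K)$ with the correct subgroup indicator, i.e.\ that ``$\epsilon$ is everywhere locally a norm from $K/k$'' is the same as ``$\epsilon\in\O_k^\times\cap\Norm_{K/k}\prod_{w\in\Val_K}\O_{K,w}^\times$'', together with the well-definedness of $f_\epsilon$ on the quotient by $\O_k^{\times e}$; both follow from the local-norm remarks preceding the statement and from~\cite[Lemma~4.4]{HNP2}. One should also note that the expression $[K:k]$ appearing in the denominator is harmless notation for $|G|$, so it genuinely factors out of the sum over $K$.
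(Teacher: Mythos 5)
Your proposal is correct and is essentially the same argument the paper intends: combine Furuta's formula (Lemma~\ref{lem:genus_formula}) with Lemmas~\ref{lem:third_iso} and~\ref{lem:subgroup_indicator} to replace the unit index by the sum $\sum_{\epsilon}f_\epsilon(K)$, then divide and sum over $K$. The paper does not spell this out (it simply says the proposition follows from the preceding display), and the points you flag for care — that ``everywhere locally a norm'' for a unit is the same as being a norm of an integral id\`ele, and that $f_\epsilon$ is well defined on $\O_k^\times/\O_k^{\times e}$ — are precisely the remarks the paper records just before the proposition, so your reconstruction matches the intended proof.
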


\subsection{Proof of Theorem \ref{thm:genus}}
In this section we prove Theorem~\ref{thm:genus_constant}. This is a strengthening of Theorem \ref{thm:genus} that allows finitely many local conditions and also gives an explicit constant. 
 
\begin{theorem} \label{thm:genus_constant}
	Let $k$ be a number field and $G$ a finite abelian group with exponent $e$.
	Let $S$ be a finite set of places of $k$ satisfying the conditions 
	imposed at the start of \S \ref{sec:Mob}  for $\cA=\OO_k^\times$.
	For $v \in S$ let 
	$\Lambda_v$ be a  set of sub-$G$-extensions of $k_v$.
	Then there exist
	$ C_{k,G,\Lambda} \geq 0$ and $\delta=\delta(k,G) >0$ 
	such that
	\begin{equation*}
		\sum_{\substack{\varphi\in\gextk \\ \Phi(\varphi)\leq B \\
		 \varphi_v \in \Lambda_v \forall v \in S}} \mathfrak{g}_{K_\varphi/k}
		 = C_{k,G,\Lambda}B(\log B)^{\varrho(k,G)-1}
		+  O(B(\log B)^{\varrho(k,G)-1-\delta}), \quad B\to\infty.
	\end{equation*}
	The leading constant $C_{k,G,\Lambda}$ equals
	\begin{align*}
&\frac{h(k)(\Res_{s=1}\zeta_k(s))^{\varrho(k,G)}}{\Gamma(\varrho(k,G))|\OO_k^\times\otimes\dual G||G|^{|S_f|+1}[\O_k^\times : \O_k^{\times e}]}
\prod_{v\notin S}\left(\sum_{\chi_v\in\Hom(\OO_v^\times,G)}\frac{\ord\chi_v}{\Phi_v(\chi_v)}\right)\zeta_{k,v}(1)^{-\varrho(k,G)} \\
    & \times \sum_{\epsilon\in \O_k^\times/\O_k^{\times e}\cap\Sha_\omega(k,\mu_e)}\sum_{\substack{\chi\in\Hom(\prod_{v\in S}k_v^\times,G)\\ \chi_v\in\Lambda_v \forall v\in S\\ \epsilon_v \in\Ker\chi_v \forall v\in S}}\left(\prod_{v\in S}\frac{\mathfrak{e}_v(\chi_v)}{\Phi_v(\chi_v)\zeta_{k,v}(1)^{\varrho(k,G)}}\right)\sum_{x\in\mathcal{X} (k, G, 1)}\prod_{v\in S}\langle \chi_v,x_v\rangle,
    \end{align*}
	and $C_{k,G,\Lambda} >0$ if there exists a 
	sub-$G$-extension of $k$
	which realises the given local conditions for all places $v \in S$.
\end{theorem}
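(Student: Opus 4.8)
The plan is to combine Proposition~\ref{prop:genus_formula} with Theorem~\ref{thm:local_norms_weighted_by_ramification} applied to each $\epsilon$. The key input is the identity from Proposition~\ref{prop:genus_formula} expressing the genus-weighted count as
\[
\frac{h(k)}{[K:k][\O_k^\times:\O_k^{\times e}]}\sum_{\epsilon\in\O_k^\times/\O_k^{\times e}}\ \sum_{\substack{\Gal(K/k)\cong G\\ \Phi(K/k)\leq B\\ \varphi_v\in\Lambda_v\,\forall v\in S}} f_\epsilon(K)\prod_v\mathfrak{e}_v(K).
\]
For each fixed $\epsilon$, the inner sum is exactly the quantity estimated by Theorem~\ref{thm:local_norms_weighted_by_ramification} with $\cA=\langle\epsilon\rangle$: at places $v\in S$ one imposes the given conditions $\Lambda_v$ together with the (still finitely many local) condition that $\epsilon$ be a local norm; at $v\notin S$ one imposes that $\epsilon$ be everywhere a local norm. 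Thus for each $\epsilon$ one gets an asymptotic $c_{k,G,\Lambda,\epsilon}B(\log B)^{\varrho(k,G,\langle\epsilon\rangle)-1}+O(\cdots)$.

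The next step is to determine which $\epsilon$ give the dominant power of $\log B$. Since $k_d=k(\mu_d,\sqrt[d]{\epsilon})$, we have $\varrho(k,G,\langle\epsilon\rangle)\leq\varrho(k,G,\{1\})=\varrho(k,G)$, with equality precisely when $[k(\mu_{\ord g},\sqrt[\ord g]{\epsilon}):k]=[k(\mu_{\ord g}):k]$ for every $g\in G\setminus\{\id\}$, i.e.\ when $\epsilon$ is an $(\ord g)$-th power in $k(\mu_{\ord g})$ for every order $\ord g$ arising in $G$; equivalently $\epsilon$ is a $d$-th power in $k(\mu_d)$ for every $d\mid e$ with $H_d\neq\emptyset$. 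The $\epsilon$ satisfying $\varrho(k,G,\langle\epsilon\rangle)<\varrho(k,G)$ contribute only to the error term after adjusting $\delta$ (taking the minimum of the finitely many $\delta(k,G,\langle\epsilon\rangle)$ and absorbing the gap in powers of $\log B$). The surviving $\epsilon$ are those lying in $\O_k^\times/\O_k^{\times e}\cap\Sha_\omega(k,\mu_e)$, matching the notation in the statement — here one needs to recall/cite that the condition ``$\epsilon$ is a $d$-th power in $k(\mu_d)$ for all relevant $d$'' is captured by membership in this $\Sha_\omega$ group (as in \cite[\S4]{HNP2}); strictly one should note $\varrho(k,G,\langle\epsilon\rangle)=\varrho(k,G)$ iff this holds for all $d\mid e$ with $H_d\neq\emptyset$, but since $G$ has exponent $e$ one has $H_e\neq\emptyset$, and membership in $\Sha_\omega(k,\mu_e)$ is what is needed.

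For each such $\epsilon$, one then reads off $c_{k,G,\Lambda,\epsilon}$ from Theorem~\ref{thm:main_constant} applied with $\cA=\langle\epsilon\rangle$: since $\varrho(k,G,\langle\epsilon\rangle)=\varrho(k,G)$ and $\mathcal{X}(k,G,\langle\epsilon\rangle)=\mathcal{X}(k,G,1)$ for these $\epsilon$ (the local norm condition on $\epsilon$ being automatic at $v\notin S$, so $\cA_v\otimes\dual G=\O_v^\times\otimes\dual G$ is not literally true, but the relevant set $\mathcal{X}$ stabilises — one should check $\mathcal{X}(k,G,\langle\epsilon\rangle)=\mathcal{X}(k,G,1)$ using that $\epsilon\in\O_v^{\times d}$ for the relevant $d$, hence contributes nothing new), the product over $v\notin S$ and the $\Gamma$-factor are independent of $\epsilon$; the only $\epsilon$-dependence sits in the $S$-local sum, where the extra constraint $\epsilon_v\in\Ker\chi_v$ for $v\in S$ appears (this is the local-norm-at-$S$ condition translated via Lemma~\ref{lem:ram_char}: $\epsilon$ is a local norm from $\chi_v$ iff $\epsilon_v\in\Ker\chi_v$). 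Summing over the surviving $\epsilon$ and pulling out the common factors — including the extra $1/([\O_k^\times:\O_k^{\times e}]|G|)$ from Proposition~\ref{prop:genus_formula} and the fact that $[K:k]=|G|$ — yields the stated formula for $C_{k,G,\Lambda}$. Finally, positivity: $C_{k,G,\Lambda}>0$ follows because $\epsilon=1$ always lies in the summation range ($1\in\Sha_\omega$), contributing a term equal (up to the positive common factor) to the $\cA=\{1\}$ leading constant of Theorem~\ref{thm:main_constant}, which is positive whenever a global sub-$G$-extension realising the $\Lambda_v$ exists; and all other $\epsilon$-terms are non-negative since each is the leading constant of an honest counting problem (here one uses $c_{k,G,\Lambda}\geq 0$ from Theorem~\ref{thm:local_norms_weighted_by_ramification}).

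The main obstacle I anticipate is the bookkeeping in step three: verifying that $\mathcal{X}(k,G,\langle\epsilon\rangle)=\mathcal{X}(k,G,1)$ for the surviving $\epsilon$ and that the product over $v\notin S$ genuinely coincides with the $\cA=\{1\}$ version (so it can be pulled out of the $\epsilon$-sum), and correctly matching the local factors at $v\in S$ — in particular tracking the interaction between the condition $\chi_v\in\Lambda_v$ and the new condition $\epsilon_v\in\Ker\chi_v$, and confirming the character-sum $\sum_{x\in\mathcal{X}(k,G,1)}\prod_{v\in S}\langle\chi_v,x_v\rangle$ is the same object for every surviving $\epsilon$. Everything else is a routine assembly of the cited results, essentially paralleling \cite[\S4]{HNP2}.
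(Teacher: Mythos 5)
Your proposal is correct and follows essentially the same route as the paper's proof: combine Proposition~\ref{prop:genus_formula} with Theorems~\ref{thm:local_norms_weighted_by_ramification} and~\ref{thm:main_constant} applied to $\cA=\langle\epsilon\rangle$, isolate the $\epsilon$ with $\varrho(k,G,\langle\epsilon\rangle)=\varrho(k,G)$ (this is the paper's Lemma~\ref{lem:rho}), check that the $v\notin S$ factors and the set $\mathcal{X}$ are $\epsilon$-independent for these $\epsilon$, and get positivity from the $\epsilon=1$ term.

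The one obstacle you flag is genuine and worth spelling out: membership in $\Sha_\omega(k,\mu_e)$ only gives $\epsilon_v\in k_v^{\times e}$ for \emph{all but finitely many} $v$, whereas you need it for \emph{every} $v\notin S$ (both to drop the constraint $\cA_v\subset\Ker\chi_v$ from the local factors at $v\notin S$ in Theorem~\ref{thm:main_constant}, and to get $\mathcal{X}(k,G,\langle\epsilon\rangle)=\mathcal{X}(k,G,1)$ via~\eqref{eq:X}). The paper closes this via Grunwald--Wang (Lemma~\ref{lem:f_e}): the only possible exceptional places for a class in $\Sha_\omega(k,\mu_e)$ are places above $2$ that fail to split in $k(\mu_{2^r})/k$, and these all lie in $S$ by the standing assumption that $S$ contains every place above a prime $p\leq |G|^2$. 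Once $\epsilon_v\in k_v^{\times e}\cap\O_v^\times=\O_v^{\times e}$ for all $v\notin S$, the tensor with $\dual G$ kills $\epsilon_v$ (since $\dual G$ has exponent $e$), giving $\langle\epsilon\rangle_v\otimes\dual G=\{1\}$ and hence $\mathcal{X}(k,G,\langle\epsilon\rangle)=\mathcal{X}(k,G,1)$, and also $\chi_v(\epsilon_v)=1$ for every $\chi_v\in\Hom(\O_v^\times,G)$, so the constraint on the $v\notin S$ Euler factors disappears. With this supplied, your assembly of the constant is exactly the paper's. A minor citation nit: the equivalence ``$\epsilon$ is a local norm from $\chi_v$ iff $\epsilon_v\in\Ker\chi_v$'' is local class field theory directly (the norm group is $\Ker\chi_v$), not Lemma~\ref{lem:ram_char}, which is about the ramification index.
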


In this statement we use some notation concerning
Tate--Shafarevich groups, which we now introduce. For a number field $k$,
commutative group scheme $G$ over $k$, and a finite set of places $S$ of $k$, we write
\begin{align*}
	\Sha_S(k,G) = \Ker\left(\HH^1(k, G) \to \prod_{v \notin S} \HH^1(k_v,G)\right),
	\,\,
	\Sha_\omega(k,G) = \varinjlim_S \Sha_S(k,G).
\end{align*}
We are  interested  in the case $G= \mu_n$, where by Kummer theory we have
$$\Sha_\omega(k,\mu_n) = \{x\in k^\times/k^{\times n}\ :\ x_v\in k_v^{\times n} \text{ for all but finitely many }v\}.$$
The natural map $\O_k^\times/\O_k^{\times n}\to k^\times/k^{\times n}$ is injective and we write $\O_k^\times/\O_k^{\times n}\cap \Sha_\omega(k,\mu_n)$ to mean the intersection taken inside $k^\times/k^{\times n}$.

\begin{proof}[Proof of Theorem \ref{thm:genus_constant}]
As in Proposition \ref{prop:genus_formula} we have
	\begin{equation}\label{eq:genus_sum}
	\sum_{\substack{\varphi\in\gextk \\ \Phi(\varphi)\leq B \\
		 \varphi_v \in \Lambda_v \forall v \in S}} \mathfrak{g}_{K_\varphi/k} =
	\frac{h(k)}{|G|[\O_k^\times : \O_k^{\times e}]}\sum_{\epsilon \in \O_k^\times/\O_k^{\times e}} 
	\sum_{\substack{\varphi\in\gextk \\ \Phi(\varphi)\leq B \\
		 \varphi_v \in \Lambda_v \forall v \in S}} f_{\epsilon}(K_{\varphi})\prod_{v \in \Val_k} \mathfrak{e}_v(K_{\varphi}),
		 \end{equation}
		 where $K_\varphi$ denotes the fixed field of $\Ker\varphi$ in $\bar{k}$ and $e$ is the exponent of $G$.
		 
	We now prepare to apply Theorem~\ref{thm:local_norms_weighted_by_ramification}. For $\epsilon \in k^\times$, define $(\Lambda,\epsilon):=\left((\Lambda,\epsilon)_v\right)_{v\in\Omega_k}$ as follows. For $v\notin S$, let $(\Lambda,\epsilon)_v$ be the set of sub-$G$-extensions of $k_v$ corresponding to those extensions of local fields $L/k_v$ for which $\epsilon$ is a local norm from $L/k_v$. For $v\in S$, let $(\Lambda,\epsilon)_v$ be the subset of $\Lambda_v$ consisting of those extensions $L/k_v$ for which $\epsilon$ is a local norm from $L/k_v$. Note that this definition only depends on the image of $\epsilon$ in $k^\times/k^{\times e}$, and for $v\in S$ the set $(\Lambda,\epsilon)_v$ may be empty.

Applying Theorems~\ref{thm:local_norms_weighted_by_ramification} and~\ref{thm:main_constant} (with $\cA$ generated by $\epsilon$) to the inner sum of~\eqref{eq:genus_sum} yields 
\begin{align*}
\sum_{\substack{\varphi\in\gextk \\ \Phi(\varphi)\leq B \\
		 \varphi_v \in \Lambda_v \forall v \in S}} f_{\epsilon}(K_{\varphi})\prod_{v \in \Val_k} \mathfrak{e}_v(K_{\varphi}) = c_{k,G,(\Lambda,\epsilon)}B(\log B)^{\varrho(k,G,\epsilon)-1}
		+  O(B(\log B)^{\varrho(k,G,\epsilon)-1-\delta}), 
	\end{align*}
	where
        \begin{equation}\label{eq:def_varrho_eps}
          \varrho(k,G,\epsilon) = \sum_{g\in G\smallsetminus\{\id_G\}}\frac{\ord g}{[k_{\ord g,\epsilon}: k]}
          \quad \text{and }  k_{d,\epsilon}=k(\mu_d,\sqrt[d]{\epsilon})
        \end{equation}
and $c_{k,G,(\Lambda,\epsilon)}\geq 0$ is as in Theorem~\ref{thm:main_constant}, with local conditions coming from $\Lambda$ and the requirement that $\epsilon$ is everywhere locally a norm. If these two sets of conditions are not compatible, i.e.\ there are no $G$-extensions $\varphi$ with $\varphi_v\in\Lambda_v$ for all $v\in S$ from which a given $\epsilon$ is everywhere locally a norm, then $c_{k,G,(\Lambda,\epsilon)}=0$ and this $\epsilon$ does not contribute to the sum in~\eqref{eq:genus_sum}. Note that if $c_{k,G,(\Lambda,1)}=0$ then there are no $G$-extensions $\varphi$ with $\varphi_v\in\Lambda_v$ for all $v\in S$ and hence $c_{k,G,(\Lambda,\epsilon)}=0$ for all $\epsilon\in \O_k^\times/\O_k^{\times e}$. The next lemma shows that the main term in~\eqref{eq:genus_sum} comes from those $\epsilon$ lying in $\O_k^\times/\O_k^{\times e}\cap \Sha_\omega(k,\mu_e)$.

\begin{lemma}\label{lem:rho}
We have $\varrho(k,G,\epsilon) \leq \varrho(k,G) $, with equality if and only if $\epsilon\in \O_k^\times/\O_k^{\times e}\cap \Sha_\omega(k,\mu_e)$.
\end{lemma}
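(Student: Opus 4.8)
The plan is to compare $\varrho(k,G,\epsilon)$ and $\varrho(k,G)$ term by term over $g\in G\smallsetminus\{\id_G\}$. Writing $d=\ord g$, the $g$-th summand of $\varrho(k,G,\epsilon)$ is $d/[k_{d,\epsilon}:k]$ with $k_{d,\epsilon}=k(\mu_d,\sqrt[d]{\epsilon})$, while the corresponding summand of $\varrho(k,G)$ is $d/[k(\mu_d):k]$. Since $k(\mu_d)\subseteq k_{d,\epsilon}$ we have $[k_{d,\epsilon}:k]\geq[k(\mu_d):k]$, so each summand satisfies the inequality $d/[k_{d,\epsilon}:k]\leq d/[k(\mu_d):k]$, giving $\varrho(k,G,\epsilon)\leq\varrho(k,G)$ at once. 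Equality holds if and only if $k_{d,\epsilon}=k(\mu_d)$, i.e.\ $\sqrt[d]{\epsilon}\in k(\mu_d)$, for every $d$ occurring as the order of some element of $G$; equivalently, since the orders of elements of $G$ are exactly the divisors $d>1$ of $e=\exp(G)$ and $k(\mu_d)\subseteq k(\mu_e)$, it suffices to check this for $d=e$, i.e.\ the condition is that $\epsilon$ becomes an $e$-th power in $k(\mu_e)$. (One should note that if some divisor $d>1$ of $e$ is \emph{not} realised as an element order in $G$ — which for abelian $G$ can only happen for $d$ not dividing the exponent of any cyclic factor, but in fact every divisor of $e$ is an element order for abelian $G$ — the argument still goes through because $k(\mu_d)\subseteq k(\mu_e)$ and $\sqrt[e]{\epsilon}\in k(\mu_e)$ forces $\sqrt[d]{\epsilon}\in k(\mu_e)$ for all $d\mid e$, and then a descent argument as below shows $\sqrt[d]{\epsilon}$ already lies in $k(\mu_d)$.)

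It therefore remains to identify the condition ``$\epsilon$ is an $e$-th power in $k(\mu_e)$'' with the condition ``$\epsilon\in\O_k^\times/\O_k^{\times e}\cap\Sha_\omega(k,\mu_e)$''. Recall from the discussion preceding the proof that $\Sha_\omega(k,\mu_e)=\{x\in k^\times/k^{\times e}: x_v\in k_v^{\times e}\text{ for all but finitely many }v\}$. First, if $\epsilon$ is an $e$-th power in $k(\mu_e)$, then writing $\epsilon=\eta^e$ with $\eta\in k(\mu_e)$, for any place $v$ of $k$ that splits completely in $k(\mu_e)/k$ — a set of places of positive density by Chebotarev — the embedding $k(\mu_e)\hookrightarrow k_v$ shows $\epsilon\in k_v^{\times e}$; more efficiently, for all but finitely many $v$ the local degree $[k(\mu_e)_w:k_v]$ at a place $w\mid v$ divides... one instead argues directly via Kummer theory: the class of $\epsilon$ in $k^\times/k^{\times e}$ corresponds to a cyclic Kummer extension $k(\sqrt[e]{\epsilon})/k$, which is contained in $k(\mu_e,\sqrt[e]{\epsilon})=k(\mu_e)$, hence is unramified outside a finite set and split at all primes splitting in $k(\mu_e)$; since $k(\sqrt[e]{\epsilon})\subseteq k(\mu_e)$ is abelian, $\epsilon_v\in k_v^{\times e}$ for every $v$ split in $k(\mu_e)$, and this holds for a density-one... no: rather, $\epsilon_v\in k_v^{\times e}$ whenever $v$ splits completely in $k(\mu_e, \sqrt[e]{\epsilon})=k(\mu_e)$, which is a set of Dirichlet density $1/[k(\mu_e):k]$, not cofinite. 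The clean statement is: $\epsilon\in\Sha_\omega(k,\mu_e)$ means the cyclic Kummer extension $k(\sqrt[e]\epsilon)/k$ has a full-density set of split primes, which by Chebotarev forces $k(\sqrt[e]\epsilon)=k$, i.e.\ $\epsilon\in k^{\times e}$...

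I should be more careful. The correct equivalence to establish is: \emph{$\epsilon\in\Sha_\omega(k,\mu_e)$ if and only if $\sqrt[e]{\epsilon}\in k(\mu_e)$}. For the forward direction, if $\epsilon_v\in k_v^{\times e}$ for all $v$ outside a finite set $T$, consider the extension $L=k(\sqrt[e]{\epsilon})$; for $v\notin T$ unramified in $L$, the fact that $\epsilon$ is a local $e$-th power means all places of $L$ above $v$ have residue degree $1$, i.e.\ $\Frob_v$ acts trivially on the Kummer generator. Passing to $k(\mu_e,\sqrt[e]\epsilon)/k$ and using that $\Gal(k(\mu_e,\sqrt[e]\epsilon)/k(\mu_e))$ is generated by such Frobenius elements (Chebotarev), we get $k(\mu_e,\sqrt[e]\epsilon)=k(\mu_e)$, i.e.\ $\sqrt[e]\epsilon\in k(\mu_e)$. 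Conversely, if $\sqrt[e]\epsilon\in k(\mu_e)$ then for all $v$ splitting completely in $k(\mu_e)/k$ — equivalently $v\equiv$ suitable congruence condition — we have $\epsilon\in k_v^{\times e}$; but since $\mu_e\subseteq k_v$ already forces the Kummer extension $k_v(\sqrt[e]\epsilon)/k_v$ to be cyclic of degree dividing $e$, and $\sqrt[e]\epsilon\in k(\mu_e)_w\subseteq$ the unramified closure for almost all $v$, in fact $\epsilon_v\in k_v^{\times e}$ whenever $v$ is unramified in $k(\mu_e)$ \emph{and} splits in it — so this direction also needs the density argument. The genuinely correct bridge, which I expect to be the main obstacle, is:

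$\epsilon\in\Sha_\omega(k,\mu_e)$ $\iff$ the cocycle class of $\epsilon$ in $\HH^1(k,\mu_e)$ is locally trivial almost everywhere $\iff$ (by the Kummer sequence and the fact that $\Sha_\omega(k,\mu_e)=\Sha_\omega(k(\mu_e)/k)$-type computation, as in \cite[\S4]{HNP2} or via Grunwald--Wang-type analysis) $\epsilon$ is an $e$-th power in $k(\mu_e)$. I will invoke the standard fact — the same one used implicitly in \cite{HNP2} — that for $x\in k^\times$, $x\in\Sha_\omega(k,\mu_n)$ if and only if $x\in k(\mu_n)^{\times n}$, which follows from inflation--restriction for $\HH^1(-,\mu_n)$ along $k(\mu_n)/k$ together with the vanishing of $\Sha_\omega(k(\mu_n),\mu_n)=\Sha_\omega(k(\mu_n),\ZZ/n)^{\vee}$-style argument (a cyclic extension split at almost all primes is trivial, by Chebotarev). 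Granting this, the chain of equivalences
\begin{equation*}
\varrho(k,G,\epsilon)=\varrho(k,G)\iff \sqrt[e]{\epsilon}\in k(\mu_e)\iff \epsilon\in\Sha_\omega(k,\mu_e)
\end{equation*}
combined with the a priori restriction $\epsilon\in\O_k^\times/\O_k^{\times e}$ in the sum \eqref{eq:genus_sum} completes the proof. The main obstacle is getting the second equivalence precisely right: the subtlety is that local triviality at almost all places is strictly weaker than local triviality at a density-one set only in the presence of roots of unity, and one must track that $\mu_e\subseteq k(\mu_e)$ makes the Kummer extension $k(\mu_e,\sqrt[e]\epsilon)/k(\mu_e)$ abelian so that Chebotarev applies cleanly — I would either cite \cite{HNP2} for this or spell out the inflation–restriction argument in a short paragraph.
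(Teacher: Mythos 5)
The opening step of your argument --- comparing $\varrho(k,G,\epsilon)$ and $\varrho(k,G)$ term by term via $k(\mu_d)\subseteq k_{d,\epsilon}=k(\mu_d,\sqrt[d]{\epsilon})$, obtaining the inequality at once and noting that equality holds iff $\sqrt[d]{\epsilon}\in k(\mu_d)$ for every $d\mid e$ --- is exactly what the paper does, and it is correct. However, your reduction from ``for all $d\mid e$'' to the single condition ``for $d=e$'' is a genuine error, not merely an unfinished ``descent argument''. The ``standard fact'' you invoke, namely that $x\in\Sha_\omega(k,\mu_n)\iff x\in k(\mu_n)^{\times n}$, is false: take $k=\Q$, $n=4$, $x=-4$. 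Then $-4=(1+i)^4\in\Q(\mu_4)^{\times 4}$, but $\Sha_\omega(\Q,\mu_4)$ is trivial (since $\Q(\mu_4)/\Q$ is cyclic, the Grunwald--Wang special case does not occur; cf.\ the discussion following \eqref{eq:C}), and $-4\notin\Q^{\times 4}$, so $-4\notin\Sha_\omega(\Q,\mu_4)$. The same example shows that ``$x\in k(\mu_e)^{\times e}$'' does \emph{not} imply ``$x\in k(\mu_d)^{\times d}$ for all $d\mid e$'': here $-4\notin\Q^{\times 2}=\Q(\mu_2)^{\times 2}$. So the condition at $d=e$ is strictly weaker than the conjunction over all $d\mid e$, and no descent can rescue the claim in general.

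The correct characterisation, and the one the paper actually uses, is that $\epsilon\in k(\mu_d)^{\times d}$ \emph{for every} $d\mid e$ is equivalent to $\epsilon\in k_v^{\times e}$ for all but finitely many $v$; this is \cite[Thm~1.6]{HNP2}, and the paper's proof of the lemma consists of the term-by-term comparison you gave followed by a direct citation of that result. Your argument should therefore retain all the conditions $d\mid e$ rather than collapsing to $d=e$, and then invoke (or prove) the equivalence with almost-everywhere local $e$-th power; the version at $d=e$ alone does not suffice, even if one restricts to units $\epsilon\in\O_k^\times$, since nothing in your argument uses the unit hypothesis to repair the implication.
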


\begin{proof}
It follows from the definition \eqref{eq:def_varrho_eps} that $\varrho(k,G,\epsilon) \leq \varrho(k,G,1) $, with equality if and only if $\epsilon\in
k(\mu_d)^{\times d}$ for all $d\mid e$. This is equivalent to $\epsilon\in k_v^{\times e}$ for all but finitely many places $v$ of $k$ (see~\cite[Thm~1.6]{HNP2}, for example). 
\end{proof}

Thus, the main term in~\eqref{eq:genus_sum} is
	\begin{align*}
	&\frac{h(k)}{|G|[\O_k^\times : \O_k^{\times e}]}\sum_{\epsilon \in \O_k^\times/\O_k^{\times e}\cap  \Sha_\omega(k,\mu_e)} 
	\sum_{\substack{\varphi\in\gextk \\ \Phi(\varphi)\leq B\\
		 \varphi_v \in \Lambda_v \forall v \in S}} f_{\epsilon}(K_{\varphi})\prod_{v \in \Val_k} \mathfrak{e}_v(K_{\varphi}) = \\
		& 	\frac{h(k)}{|G|[\O_k^\times : \O_k^{\times e}]}\sum_{\epsilon \in \O_k^\times/\O_k^{\times e}\cap  \Sha_\omega(k,\mu_e)} c_{k,G,(\Lambda,\epsilon)}B(\log B)^{\varrho(k,G)-1}
		+  O(B(\log B)^{\varrho(k,G)-1-\delta}).
		 \end{align*} 
Hence in the setting of Theorem \ref{thm:genus_constant} we obtain the stated asymptotic formula with the leading constant
\begin{equation}\label{eq:C}
C_{k,G,\Lambda}=\frac{h(k)}{|G|[\O_k^\times : \O_k^{\times e}]}\sum_{\epsilon \in \O_k^\times/\O_k^{\times e}\cap  \Sha_\omega(k,\mu_e)} c_{k,G,(\Lambda,\epsilon)}.
\end{equation}
Applying Theorem~\ref{thm:main_constant} to the term for $\epsilon=1$ already shows that
$C_{k,G,\Lambda}$ is positive if there exists a sub-$G$-extension of $k$ which
realises the local conditions imposed by $\Lambda$ for all places $v \in S$. It
remains to prove that $C_{k,G,\Lambda}$ has the form claimed in
Theorem~\ref{thm:genus_constant}.

We begin by noting that the sum in~\eqref{eq:C} has either one or two terms. To see this, let $2^r$ be the largest power of $2$ dividing $e$. It follows from~\cite[Theorem~9.1.11]{NSW08} that $\Sha_\omega(k,\mu_e)$ is trivial unless $k(\mu_{2^r})/k$ is non-cyclic, in which case $\Sha_\omega(k,\mu_e)\cong \Z/2\Z$ (see~\cite[Lemma~4.9]{HNP2}).

We next show that the elements $\epsilon\in \O_k^\times/\O_k^{\times e}\cap\Sha_\omega(k,\mu_e)$ can only be non-trivial at a uniformly bounded (in terms of $[k:\bbQ]$) subset of the places in $S$.
We write $\epsilon_v$ for the image of $\epsilon$ in $k_v^\times/k_v^{\times e}$. Recall that a place $v$ of a number field $L$ is said to \emph{split} (or \emph{decompose}) in an extension $M/L$ if there exist at least two distinct places of $M$ above $v$.

\begin{lemma}\label{lem:f_e} Let $\epsilon\in \O_k^\times/\O_k^{\times e}\cap\Sha_\omega(k,\mu_e)$, where $e$ is the exponent of $G$. Let $2^r$ be the largest power of $2$ dividing $e$ and let $R$ be a set of places of $k$ containing all places above $2$ that do not split in $k(\mu_{2^r})/k$. Then $\epsilon_v\in k_v^{\times e}$ for all $v\notin R$.
\end{lemma}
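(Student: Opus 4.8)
The plan is to use the characterization of $\Sha_\omega(k,\mu_e)$ as a subgroup of order dividing $2$, combined with an explicit description of its (at most one) non-trivial element coming from the classical Grunwald--Wang phenomenon. Recall from the discussion preceding the lemma that $\Sha_\omega(k,\mu_e)$ is trivial unless $k(\mu_{2^r})/k$ is non-cyclic, so we may assume $k(\mu_{2^r})/k$ is non-cyclic and $\Sha_\omega(k,\mu_e)\cong\Z/2\Z$; the non-trivial class is then represented by the ``special element'' of Wang, which (up to $e$-th powers) one can take to be a specific element $\epsilon_0\in k^\times$ whose local behaviour is well understood. Concretely, the key input is that $\epsilon_0\in k_v^{\times e}$ for every place $v$ that splits in $k(\mu_{2^r})/k$, and the only places where $\epsilon_0$ can fail to be an $e$-th power locally are those non-split places above $2$ (together, a priori, with the archimedean places, but for $\epsilon\in\O_k^\times/\O_k^{\times e}$ lying in $\Sha_\omega$ the relevant obstruction is $2$-primary and concentrated at dyadic places). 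Since $R$ is assumed to contain all places above $2$ that do not split in $k(\mu_{2^r})/k$, this gives exactly $\epsilon_v\in k_v^{\times e}$ for all $v\notin R$.

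First I would reduce to the case $\epsilon\neq 1$ in $\O_k^\times/\O_k^{\times e}$, the case $\epsilon=1$ being trivial. Then I would invoke the structure result: by \cite[Theorem~9.1.11]{NSW08} and \cite[Lemma~4.9]{HNP2}, $\Sha_\omega(k,\mu_e)$ has order at most $2$, and when non-trivial its generator can be described explicitly. The second step is to identify, for this generator, the exact set of places at which it is not locally an $e$-th power: by Kummer theory, $\epsilon\in k_v^{\times e}$ if and only if $\epsilon$ lies in the image of $k_v^\times\to k_v^\times/k_v^{\times e}$ being trivial, and the Grunwald--Wang analysis shows the local non-triviality locus is contained in $\{v\mid 2:\ v\text{ does not split in }k(\mu_{2^r})/k\}$. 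The third step is simply to observe that this locus is contained in $R$ by hypothesis, so $\epsilon_v\in k_v^{\times e}$ for all $v\notin R$, which is the claim.

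The main obstacle is pinning down precisely the local non-triviality locus of the Wang special element and justifying that it consists only of non-split dyadic places — this is where one must be careful, since in general the special element could in principle also misbehave at real places, but the hypothesis that $\epsilon$ comes from a \emph{unit} and that we are working modulo $e$-th powers (with the $2$-primary part of the obstruction being the only contribution to $\Sha_\omega$) rules this out. I would handle this by citing the explicit description in \cite[Lemma~4.9]{HNP2} (or the underlying Grunwald--Wang statement in \cite[Ch.~9]{NSW08}), which already records that the failure of the Hasse principle for $e$-th powers is governed exactly by the non-cyclicity of $k(\mu_{2^r})/k$ and is detected at the dyadic places that remain non-split. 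Everything else is routine once this localization statement is in hand.
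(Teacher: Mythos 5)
Your proposal is substantively correct and rests on the same key input as the paper: the Grunwald--Wang theorem (\cite[Theorem~9.1.11]{NSW08}) together with the structure of $\Sha_\omega(k,\mu_e)$ from \cite[Lemma~4.9]{HNP2}. The difference is one of packaging. You explicitly identify the (at most one) non-trivial class in $\Sha_\omega(k,\mu_e)$ with the Wang special element and analyse where it is locally an $e$-th power. The paper's proof avoids naming the special element: writing $T$ for a cofinite set of places at which $\epsilon$ is locally an $e$-th power (such a $T$ exists by definition of $\Sha_\omega$) and $U=\Omega_k\smallsetminus R$, it simply observes that Grunwald--Wang gives
\[
\Ker\Bigl(k^\times/k^{\times e}\to\prod_{v\in T}k_v^\times/k_v^{\times e}\Bigr)=\Ker\Bigl(k^\times/k^{\times e}\to\prod_{v\in T\cup U}k_v^\times/k_v^{\times e}\Bigr),
\]
because enlarging $T$ to $T\cup U$ removes from the exceptional set only places outside the Grunwald--Wang special set, and then reads off the conclusion. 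Both routes ultimately use that the special set is contained in the dyadic places not splitting in $k(\mu_{2^r})/k$, so either is fine. One caution about your hedge on archimedean places: it is resolved not by the hypothesis that $\epsilon$ is a unit (that plays no role in this lemma), but simply because the Grunwald--Wang special set consists only of non-archimedean places above $2$ --- at a real place $2+\eta_r>0$ is automatically a square, so real places never enter the special case.
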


\begin{proof}
By definition, there exists a cofinite set of places $T$ such that $\epsilon_v\in k_v^{\times e}$ for all $v\in T$. Let $U=\Omega_k\setminus R$. By Grunwald--Wang~\cite[Theorem~9.1.11]{NSW08} we have
\[\Ker \left(k^\times/k^{\times e}\to \prod_{v\in T}k_v^\times/k_v^{\times e}\right)=\Ker \left(k^\times/k^{\times e}\to \prod_{v\in T\cup U}k_v^\times/k_v^{\times e}\right),\]
proving that $\epsilon_v\in k_v^{\times e}$ for all $v\notin R$.
\end{proof}

By Lemma \ref{lem:f_e} and \eqref{eq:X}, we have $\mathcal{X}(k, G,
\epsilon)=\mathcal{X}(k, G, 1)$ for $\epsilon \in
\Sha_\omega(k,\mu_e)$. Together with Theorem~\ref{thm:main_constant},
Lemma~\ref{lem:rho}, Lemma~\ref{lem:f_e} and our assumptions on $S$, this shows
that, for $\epsilon \in \O_k^\times/\O_k^{\times e}\cap  \Sha_\omega(k,\mu_e)$,
\begin{align*}
 c_{k,G,(\Lambda,\epsilon)}=&\frac{(\Res_{s=1}\zeta_k(s))^{\varrho(k,G)}}{\Gamma(\varrho(k,G))|\OO_k^\times\otimes\dual G||G|^{|S_f|}}\prod_{v\notin S}\left(\sum_{\chi_v\in\Hom(\OO_v^\times,G)}\frac{\ord\chi_v}{\Phi_v(\chi_v)}\right)\zeta_{k,v}(1)^{-\varrho(k,G)}\\
    &\times \sum_{\substack{\chi\in\Hom(\prod_{v\in S}k_v^\times,G)\\ \chi_v\in\Lambda_v \forall v\in S\\ \epsilon_v \in\Ker\chi_v \forall v\in S}}\left(\prod_{v\in S}\frac{\mathfrak{e}_v(\chi_v)}{\Phi_v(\chi_v)\zeta_{k,v}(1)^{\varrho(k,G)}}\right)\sum_{x\in\mathcal{X} (k, G, 1)}\prod_{v\in S}\langle \chi_v,x_v\rangle.
    \end{align*}
Plugging this into \eqref{eq:C} gives the correct leading constant for Theorem \ref{thm:genus_constant}.
\end{proof}


\subsection{Remarks on Theorem~\ref{thm:genus_constant}}
The constant $c$ in Theorem~\ref{thm:genus} is equal to $C_{k,G,\Lambda}/\lvert \Aut(G)|$, where $C_{k,G,\Lambda}$ is as in Theorem \ref{thm:genus_constant} and $\Lambda_v$ is taken to be the set of all sub-$G$-extensions of $k_v$ for each $v\in S$. This is because in the counting function in Theorem~\ref{thm:genus} we do not fix a choice of isomorphism from $\Gal(K/k)$ to $G$.
Next, we observe that the exponent occurring in Theorem~\ref{thm:genus} is an integer.
\begin{lemma} \label{lem:varrho_integer}
The number
$\varrho(k,G)$ is a non-negative integer.
\end{lemma}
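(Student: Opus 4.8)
The plan is to exhibit $\varrho(k,G)$ as a sum of orders of elements of $G$, one summand per orbit of a natural group action, so that integrality becomes manifest.

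First I would set $e=\exp(G)$ and $\Gamma=\Gal(k(\mu_e)/k)$, and recall that the cyclotomic character embeds $\Gamma\hookrightarrow(\Z/e\Z)^\times$. For $a\in\Gamma$ and $g\in G$, the assignment $a\cdot g:=g^a$ is well defined (since $g^e=\id_G$ as $\ord g\mid e$) and defines an action of $\Gamma$ on $G$ by group automorphisms; in particular it preserves the order of each element and fixes $\id_G$, so it restricts to an action on $G\smallsetminus\{\id_G\}$. Next I would compute the orbit of an element $g$ of order $d$ (so $d\mid e$): its stabilizer is $\{a\in\Gamma : g^a=g\}=\{a\in\Gamma : a\equiv 1\!\!\pmod d\}$, which under the cyclotomic identification is exactly $\Gal(k(\mu_e)/k(\mu_d))$, because $\sigma\in\Gamma$ fixes $k(\mu_d)$ if and only if it fixes $\mu_d$, i.e.\ if and only if its image modulo $d$ is $1$. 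Hence the orbit of $g$ has cardinality $[\Gamma:\Gal(k(\mu_e)/k(\mu_d))]=[k(\mu_d):k]$.

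Finally I would decompose $G\smallsetminus\{\id_G\}$ into $\Gamma$-orbits $\mathcal{O}$. All elements of a fixed orbit $\mathcal{O}$ have the same order $d_{\mathcal{O}}$, and $|\mathcal{O}|=[k(\mu_{d_{\mathcal{O}}}):k]$ by the previous step, so
$$\varrho(k,G)=\sum_{\mathcal{O}}\sum_{g\in\mathcal{O}}\frac{\ord g}{[k(\mu_{\ord g}):k]}=\sum_{\mathcal{O}}|\mathcal{O}|\cdot\frac{d_{\mathcal{O}}}{[k(\mu_{d_{\mathcal{O}}}):k]}=\sum_{\mathcal{O}}d_{\mathcal{O}},$$
which is a sum of positive integers, hence a non-negative integer (and positive when $G$ is non-trivial). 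One can cross-check this against the Examples in the introduction: for $G=\Z/\ell\Z$ with $\ell$ prime, $\Gamma$ acts on $(\Z/\ell\Z)^\times$ by multiplication with $(\ell-1)/[k(\mu_\ell):k]$ orbits each contributing $\ell$; for $G=(\Z/2\Z)^2$ the action is trivial and one gets $2+2+2=6$; for $G=\Z/4\Z$ one gets $4+2=6$ or $4+4+2=10$ according to whether $\mu_4\subset k$.

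I do not expect a genuine obstacle here: the only point needing care is the identification of the stabilizer of $g$ with $\Gal(k(\mu_e)/k(\mu_d))$, i.e.\ matching the congruence condition $a\equiv 1\pmod d$ with fixing the subfield $k(\mu_d)$, which is routine cyclotomic Galois theory.
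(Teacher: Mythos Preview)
Your argument is correct. Both your proof and the paper's rely on the action $g\mapsto g^a$ of units modulo the exponent, but they exploit it differently. The paper groups elements by their order $n$ and uses two divisibilities in series: $[k(\mu_n):k]\mid\varphi(n)$ (basic cyclotomic theory) and $\varphi(n)\mid\#\{g\in G:\ord g=n\}$ (because the full group $(\Z/n\Z)^\times$ acts freely on elements of order $n$). Your approach replaces this two-step divisibility by a single orbit-stabilizer computation for the subgroup $\Gamma=\Gal(k(\mu_e)/k)\subset(\Z/e\Z)^\times$, showing directly that each $\Gamma$-orbit of an element of order $d$ has size exactly $[k(\mu_d):k]$, so that $\varrho(k,G)=\sum_{\mathcal{O}}d_{\mathcal{O}}$. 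The paper's route is marginally more elementary (no need to identify the stabilizer as a Galois subgroup), while yours yields a cleaner closed form for $\varrho(k,G)$ as a sum of element orders indexed by $\Gamma$-orbits, which also explains the numerical examples transparently.
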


\begin{proof}
Let $\varphi(n)=|(\bbZ/n\bbZ)^\times|$.  We have
$$
\varrho(k,G)=\sum_{n>1}\frac{n}{[k(\mu_n):k]}\#\{g\in G : \ord g=n\}.$$
It suffices to note that $[k(\mu_n):k]\mid \varphi(n)$ and that
$\varphi(n) \mid \#\{g\in G : \ord g=n\}$, since if $g \in G$ has order $n$ then
so does $g^a$ for all $1 \leq a \leq n$ with $\gcd(a,n) = 1$, and these elements
are distinct.
\end{proof}

Next we show that the group $\O_k^\times/\O_k^{\times
  e}\cap\Sha_\omega(k,\mu_e)$, which occurs in the formula for the constant $C_{k,G,\Lambda}$ in Theorem~\ref{thm:genus_constant}, can be non-trivial.

\begin{example} \label{ex:unit_local_norm}
	Here we give an example of a number field $k$ such that the map
	$$\O_k^{\times}/\O_k^{\times 8} \to 
	\prod_{v \in \Val_k} \O_{v}^{\times}/\O_{v}^{\times 8} $$
	is not injective and hence $\O_k^\times/\O_k^{\times e}\cap\Sha_\omega(k,\mu_e)\cong \Z/2$.
	
	Take $k = \Q(\sqrt{7})$. Then it is well known that $16$ is an $8$th
	power everywhere locally in $k$, but not globally an $8$th power
	(Wang's counter-example to  Grunwald's theorem).
	Note that $2$ is ramified in $k$, so that $(2) = \fp^2$ for some prime
	ideal $\fp$. Then $\fp^8/(16)$ is the trivial ideal.
	But the class number of $k$ is $1$ so, writing $\fp=(a)$, we have
	that $a^8/16$ is a unit that is an $8$th power everywhere locally but not an $8$th power globally, as required.
	
	Explicitly, we have $\fp = (3 + \sqrt{7})$, so we may take
	$a = \sqrt{7} + 3$. Then $u = a^8/16 = 32257 + 12192\sqrt{7}$ is the sought-after unit. Note that
	$u = \epsilon^4$, where $\epsilon = 8 + 3\sqrt{7}$ is the fundamental unit.
\end{example}

\section{Zero density with fixed genus number}
In this section, we prove Theorem \ref{thm:zero_percent}. Let $e$ again denote
the exponent of $G$. From Lemma
\ref{lem:genus_formula}, we see at once that any $G$-extension $K/k$ satisfies
\begin{equation*}
  \mathfrak{g}_{K/k}\geq \frac{h(k)}{|G|[\O_k^\times:\O_k^{\times
      e}]}\prod_{v\in\Val_k}\mathfrak{e}_v(K)\gg 2^{\omega(K/k)},
\end{equation*}
where $\omega(K/k)$ denotes the number of places of $k$ that ramify in
$K/k$. With this observation, Theorem \ref{thm:zero_percent} follows
immediately from the following result.

\begin{theorem}
  Let $k$ be a number field, $G$ a finite abelian group and $r\in\N$. Then
   \begin{equation}\label{eq:zero_density_ramified_primes}
    \lim_{B\to\infty}\frac{|\{K/k\ :\ \Gal(K/k)\cong G,\ \Phi(K/k)\leq
      B\ \text{ and }\ \omega(K/k)\leq r\}|}{|\{K/k\ :\ \Gal(K/k)\cong G,\ \Phi(K/k)\leq
      B\}|}=0.
  \end{equation}
\end{theorem}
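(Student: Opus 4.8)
The plan is to prove the zero-density statement \eqref{eq:zero_density_ramified_primes} by comparing both the numerator and the denominator to asymptotics obtained from the harmonic-analysis machinery of \S\ref{sec:Mob}, or more directly from \cite{Woo10} as recalled in Remark~\ref{rmk:average}. The denominator is, up to the constant $|\Aut(G)|$, the unweighted count of $G$-extensions with $\Phi(K/k)\leq B$, which grows like $c'B(\log B)^{\omega(k,G)-1}$. So it suffices to show that the numerator is $o(B(\log B)^{\omega(k,G)-1})$, i.e.\ that imposing the condition $\omega(K/k)\leq r$ on the number of ramified places genuinely lowers the power of $\log B$.

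\textbf{Step 1: reduce to a sum over ramified places.} First I would observe that $\omega(K/k)\leq r$ means the conductor of $K/k$ is divisible by at most $r$ distinct primes. I would partition the count according to the set $T$ of places of $k$ ramifying in $K/k$; since $|T|\leq r$ and each ramified place $v$ contributes a factor $q_v^{c_v}$ with $c_v\geq 1$ to $\Phi(K/k)$, the places in $T$ are all among the places with $q_v\leq B$. For each fixed finite set $T$ of places with $|T|\leq r$, the extensions ramified exactly within $T$ form a set cut out by local conditions: $\varphi_v$ unramified for $v\notin T$, and $\varphi_v$ ranging over ramified (or arbitrary) sub-$G$-extensions for $v\in T$.

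\textbf{Step 2: bound the count for a fixed ramification set, then sum over $T$.} For a fixed $T$, the number of $G$-extensions ramified only within $T$ with $\Phi(K/k)\leq B$ is bounded: the conductor exponents are bounded in terms of $|G|$ and $T$, and there are only finitely many local sub-$G$-extensions at each $v\in T$, so this count is $O_{k,G}(1)$ uniformly, or more precisely $O_{k,G}(\prod_{v\in T} C_v)$ for explicit local constants. Hence
\begin{equation*}
  |\{K/k : \Gal(K/k)\cong G,\ \Phi(K/k)\leq B,\ \omega(K/k)\leq r\}|
  \ll \sum_{\substack{T\subset\Val_k,\ |T|\leq r\\ q_v\leq B\ \forall v\in T}} \prod_{v\in T} C_v.
\end{equation*}
Estimating the right-hand side amounts to a count of $r$-tuples of prime ideals of norm at most $B$, weighted by bounded local factors. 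A crude bound gives $O((\pi_k(B))^r)=O((B/\log B)^r)$, which is already $o(B(\log B)^{\omega(k,G)-1})$ provided $r$... no: this fails for $r\geq 2$ since $(B/\log B)^r \gg B$. So the crude bound is \textbf{not} enough, and this is the main obstacle: one must exploit that a $G$-extension ramified at many primes has \emph{large} conductor, so the ramified places cannot all be small.

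\textbf{Step 3 (the real argument): conductor lower bound forces small $\log$-power.} The correct approach is to note that if $\omega(K/k)=m$ with ramified places $v_1,\dots,v_m$, then $\Phi(K/k)\geq \prod_{i=1}^m q_{v_i}$, so counting such $K$ with $\Phi(K/k)\leq B$ reduces to counting with the conductor genuinely distributed among $m\leq r$ places. Summing the local Dirichlet series, the generating function for $G$-extensions with $\omega(K/k)\leq r$ is dominated by the sum of products of at most $r$ local Euler factors $\sum_{v}(\text{local sum})/q_v^{s}+\cdots$; each local factor has a simple pole contribution of the shape $(\text{const})/q_v^s$, and taking at most $r$ of them yields a Dirichlet series whose abscissa-of-convergence singularity at $s=1$ is of type $(\log\frac{1}{s-1})$ to a bounded power, equivalently a pole of $\zeta_k(s)$ to a power $\leq r$ times a constant depending only on $G$ — but crucially a \emph{bounded} power, not growing to $\omega(k,G)$ unless $\omega(k,G)$ is itself bounded by that. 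When $\omega(k,G)-1 > (\text{that bounded power})$, a Tauberian theorem (Selberg--Delange, \cite[Thm~II.5.3]{Ten15}, as already used in Proposition~\ref{prop:counting_result}) gives the numerator as $O(B(\log B)^{r_0-1})$ with $r_0<\omega(k,G)$, hence $o$ of the denominator. To cover the finitely many remaining groups $G$ for which $\omega(k,G)$ is small, I would treat them separately: in fact the inequality to check is that for any finite abelian $G$, the quantity $\omega(k,G)=\sum_{g\neq\id}1/[k(\mu_{\ord g}):k]$ exceeds the exponent one gets by restricting to boundedly many ramified primes. The cleanest formulation: restricting to $\omega(K/k)\leq r$ is, in the Mellin/Dirichlet-series picture, the same as replacing the Euler product $\prod_v(1+a_v/q_v^s+\cdots)$ by its truncation to at most $r$ active factors, which has strictly smaller polar order at $s=1$ (namely $0$, as a polynomial in the $\log$'s gives only finitely many terms each of size $\ll B$ — wait, again $r$ factors give $\ll B\cdot(\text{finite})$). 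Let me state it correctly: $r$ active local factors, each contributing $\sum_{q_v\leq B} a_v/q_v \sim \text{const}\cdot\log\log B$ is wrong too; rather $\sum_v a_v/q_v^s$ has a \emph{logarithmic} singularity $\log\zeta_k(s)$-type, so a product of $r$ of them is $(\log\frac{1}{s-1})^r$-type, giving numerator $\ll B(\log\log B)^{r}$. That is genuinely $o(B(\log B)^{\omega(k,G)-1})$ as soon as $\omega(k,G)>1$, which holds for every non-trivial $G$ (indeed $\omega(k,G)\geq 1$ always, with equality iff... one must check $\omega(k,G)>1$ for all non-trivial $G$, which is elementary: if $G$ has a non-identity element of order $2$ then that element alone contributes $1/[k(\mu_2):k]=1$ and another non-identity element contributes more; if not, $G$ has odd order $>1$ and there are at least $2$ non-identity elements). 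This elementary positivity check, together with the Selberg--Delange estimate applied to the truncated Euler product, completes the proof. The one genuinely delicate point — the main obstacle — is making Step~3 rigorous: controlling the truncated Euler product's behaviour near $s=1$ and extracting the $B(\log B)^{o(1)}$-type bound via a Tauberian theorem, uniformly in the choice of which $\leq r$ places are ramified, which I would handle exactly as in \cite[\S3]{HNP2} by bounding $\sum_{q_v\leq y}(\text{local mass})/q_v$ and iterating $r$ times.
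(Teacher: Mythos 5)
Your proposal takes a genuinely different route from the paper. The paper uses a second‑moment (Turán/Chebyshev) argument: fix a large $z$, compute the mean $M(z)\approx\sum_{N\fp\leq z}\delta_\fp$ of the number of ramified primes up to $z$ using the asymptotics of \cite[Thm.~3.1 and Thm.~3.22]{HNP2}, show that $\sum_\fp\delta_\fp$ diverges (because $\delta_\fp\gg 1/q_\fp$ on a positive density of $\fp$), and then apply \cite[Lemma 5.1]{FW17} to conclude that at most a small fraction of $G$-extensions have $\leq r$ ramified primes below $z$. You instead propose a Dirichlet‑series argument: truncate the Euler product to at most $r$ ``active'' local factors, identify the resulting singularity at $s=1$ as logarithmic of order $\leq r$, and apply a Tauberian bound. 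Both strategies are natural and the Dirichlet‑series one is in principle viable, but your execution has a genuine gap.

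The gap is in the final step. You derive the upper bound $\ll B(\log\log B)^r$ for the numerator and then assert that this is $o\bigl(B(\log B)^{\omega(k,G)-1}\bigr)$ because ``$\omega(k,G)>1$ for every non‑trivial $G$,'' offering a proof sketch. That claim is false. For $G=\Z/2\Z$ we have $\omega(k,G)=1/[k(\mu_2):k]=1$ for every $k$ (this is the classical case of quadratic fields, where the denominator is $\asymp B$ rather than $B\log B$). Likewise $\omega(k,\Z/3\Z)=2/[k(\mu_3):k]=1$ whenever $\mu_3\not\subset k$, e.g.\ $k=\Q$. In these cases your crude bound $B(\log\log B)^r$ does not beat the denominator, and the proof as written does not establish \eqref{eq:zero_density_ramified_primes}. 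Your ``elementary'' argument that $\omega(k,G)>1$ quietly assumes $G$ has a second non‑identity element contributing a positive amount, which fails for $\Z/2\Z$, and underestimates the degree $[k(\mu_3):k]$ for $\Z/3\Z$.

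To rescue your route one would need the sharper Tauberian estimate: a truncated Euler product with a singularity of logarithmic type $(\log\frac{1}{s-1})^r$ at $s=1$ (i.e.\ $\zeta_k^0$ times a log‑power) should yield a count of order $B(\log\log B)^{r-1}/\log B$, analogous to Landau's estimate for integers with $\leq r$ prime factors, and that \emph{is} $o\bigl(B(\log B)^{\omega(k,G)-1}\bigr)$ for every non‑trivial $G$ since one only needs $\omega(k,G)>0$. Making that precise requires more than the crude $s=1+1/\log B$ substitution you are implicitly using; it would need a genuine Selberg--Delange analysis of the truncated series, uniform over the choice of at most $r$ ramified places. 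This is exactly the delicate point you flag at the end, and it is not resolved in the proposal. The paper's second‑moment argument sidesteps the issue entirely: it never needs any lower bound on $\omega(k,G)$ beyond what comes from $\sum_\fp\delta_\fp=\infty$, which is established directly.
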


\begin{proof}
  We show that the limit in \eqref{eq:zero_density_ramified_primes} is smaller
  than any $\varepsilon>0$. To this end, we will apply Chebyshev's inequality in the form of
  \cite[Lemma 5.1]{FW17}, which is a straightforward generalisation of its
  special case \cite[Lemma 3.1]{EPW17} for $k=\Q$.

  Let $z$ be a fixed positive constant, chosen sufficiently large in terms of
  $k,G$ and $\varepsilon$. Write $P(z)=\{\fp\ :\ N(\fp)\leq z\}$ for the set of
  all non-zero prime ideals of $\O_k$ with norm bounded by $z$. Moreover, fix a
  sufficiently large finite set $S_0$ of places of $k$.

  For any $B>1$, let
  \begin{equation*}
    \mathscr{A}=\{K/k\ :\ \Gal(K/k)\cong G\text{ and }\Phi(K/k)\leq B\}
  \end{equation*}
  and write $N=|\mathscr{A}|$. For any prime ideal $\fp\in S_0$, we take
  $\mathscr{A}_\fp=\mathscr{A}$. For $\fp\notin S_0$, we take
  \begin{equation*}
    \mathscr{A}_{\fp}= \{K\in\mathscr{A}\ :\ \fp \text{ ramifies in }K\}.
  \end{equation*}
  For any distinct prime ideals $\fp,\fq$, write
  $\mathscr{A}_{\fp,\fq}=\mathscr{A}_\fp\cap\mathscr{A}_\fq$. To determine the
  cardinalities of the sets $\mathscr{A}_\fp$ and $\mathscr{A}_{\fp,\fq}$
  asymptotically for $B\to\infty$, we apply \cite[Theorem 3.1]{HNP2}.
  We take $\mathcal{A}$ in the statement of \cite[Theorem 3.1]{HNP2} to be the
  trivial subgroup. For $v\in S_0$, we take
  $\Lambda_v=\Hom(k_v^\times,G)$. For $v\in S\smallsetminus S_0$, we take
  \begin{equation*}
    \Lambda_v=\{\chi\in\Hom(k_v^\times,G)\ :\ \chi\text{ is ramified}\}.
  \end{equation*}
  As $S_0$ is sufficiently large, the explicit description of the
  leading constant in \cite[Theorem 3.22]{HNP2} applies to any finite set of
  places $S\supset S_0$.
  Hence, applying \cite[Theorem 3.1 and Theorem 3.22]{HNP2} with $S=S_0$,
  $S=S_0\cup\{\fp\}$ and $S=S_0\cup\{\fp,\fq\}$, respectively, we obtain the estimates
  \begin{align*}
    N = |\mathscr{A}| & = c_{k,G}B(\log B)^{\omega(k,G)-1}+O(B(\log
    B)^{\omega(k,G)-1-\alpha}),\\
    |\mathscr{A}_\fp| &= \delta_\fp N + R_{\fp},\\
    |\mathscr{A}_{\fp,\fq}|& = \delta_\fp\delta_\fq N + R_{\fp,\fq},
  \end{align*}
  where $\omega(k,G)>0$ is given in Remark
  \ref{rmk:average}, $\alpha=\alpha(k,G)>0$, $c_{k,G}>0$,
  \begin{equation*}
    R_{\fp} = O_\fp(B(\log B)^{\omega(k,G)-1-\alpha}),\quad R_{\fp,\fq} = O_{\fp,\fq}(B(\log B)^{\omega(k,G)-1-\alpha}),
  \end{equation*}
  and $\delta_\fp$ is given as follows. For $\fp\in S_0$ we have $\delta_\fp=1$, and
  for $\fp\notin S_0$ corresponding to the place $v$, we have
  \begin{equation*}
    \delta_\fp =
    \frac{\sum_{\chi_v\in\Lambda_v}\frac{1}{\Phi_v(\chi_v)}}{\sum_{\chi_v\in\Hom(k_v^\times,G)}\frac{1}{\Phi_v(\chi_v)}}.    
  \end{equation*}
  Here we have used the fact that Euler factors for places $v\in
  S\smallsetminus S_0$ can be split off the formula for the constant
  $c_{k,G,\Lambda}$ given in \cite[Theorem 3.22]{HNP2}, as explained in the
  proof of \cite[Lemma 4.5]{HNP2}. We evaluate $\delta_\fp$
  further as follows. By \cite[Lemma 3.10]{HNP2}, we have, for $v\notin S_0$,
  \begin{equation*}
   \frac{1}{|G|} \sum_{\chi_v\in\Hom(k_v^\times,G)}\frac{1}{\Phi_v(\chi_v)} =
    1+(\lvert\Hom(\F_v^\times,G)|-1)q_v^{-1} = 1+(|G[q_v-1]|-1)q_v^{-1}. 
  \end{equation*}
  As the unramified homomorphisms $\chi_v$ contribute exactly the ``$1$''
  in the above formula, we also get  that
  \begin{equation*}
    \frac{1}{|G|}\sum_{\chi_v\in\Lambda_v}\frac{1}{\Phi_v(\chi_v)} =
    (\lvert \Hom(\F_v^\times,G)|-1)q_v^{-1} = (|G[q_v-1]|-1)q_v^{-1}, 
  \end{equation*}
  and hence 
  \begin{equation*}
    \delta_\fp=\frac{(|G[q_v-1]|-1)q_v^{-1}}{1+(|G[q_v-1]|-1)q_v^{-1}}\geq\frac{G[q_v-1]-1}{2q_v}\geq\frac{1}{2q_v}  
  \end{equation*}
  for sufficiently large $v$ with $\gcd(q_v-1,e)>1$. As the set of $v$ satisfying
  the latter condition has positive density by Chebotarev, we have shown that
  the series
  \begin{equation*}
    \sum_\fp\delta_\fp
  \end{equation*}
  diverges. For any field $K\in\mathscr{A}$, we clearly have 
   \begin{equation*}
    \omega_z(K/k):=|\{\fp\in P(z)\ :\ K\in \mathscr{A}_\fp\}|\leq \omega(K/k),
  \end{equation*}
  hence we are interested in bounding 
  $$E(B;z,r) = \{ K\in\mathscr{A} : \omega_z(K/k)\leq r\}$$
 for sufficiently large $B$. 
 To do so, we consider the mean
  \begin{align*}
    M(z)&:=\frac{1}{N}\sum_{K\in\mathscr{A}}\omega_z(K/k) =
          \frac{1}{N}\sum_{\fp\in P(z)}|\mathscr{A}_\fp|\\
    &=\sum_{\fp\in P(z)}\delta_\fp+O_z((\log B)^{-\alpha}) = U(z) + O_z((\log B)^{-\alpha}),
  \end{align*}
  where we have written $U(z)=\sum_{\fp\in P(z)}\delta_\fp$. As the sum over
  $\delta_\fp$ diverges, we may choose $z$ large enough so that $U(z)>4r$. Then we
  have $M(z)>U(z)/2>2r$ for all sufficiently large $B$. By \cite[Lemma
  5.1]{FW17}, we get
  \begin{equation*}
    \frac{E(B;z,r)}{N}\leq\frac{4}{M(z)^2}\left(U(z)+\frac{1}{N}\sum_{\fp,\fq\in
        P(z)}|R_{\fp,\fq}|+\frac{2U(z)}{N}\sum_{\fp\in
        P(z)}|R_\fp|+\left(\frac{1}{N}\sum_{\fp\in P(z)}|R_\fp|\right)^2\right),
  \end{equation*}
  where $R_{\fp,\fp}$ is interpreted as $R_\fp$. The expression on the
  left-hand side of \eqref{eq:zero_density_ramified_primes} is bounded above by
  \begin{equation*}
    \lim_{B\to\infty}\frac{E(B;z,r)}{N} \leq
    \frac{16}{U(z)^2}\big(U(z)+0+0+0\big) = \frac{16}{U(z)}<\varepsilon,
  \end{equation*}
  provided  $z$ is chosen large enough. 
\end{proof}

\section{Narrow genus numbers and generalisations}
We finish by discussing the narrow genus number $\mathfrak{g}_{K/k}^+$. For abelian $K/k$, this is defined analogously to Definition \ref{def:genus}, but instead taking the largest extension of $K$ which is abelian over $k$ and unramified only at all non-archimedean places of $K$. Warning: some authors take this as the definition of the genus number; this is due to Fr\"{o}hlich's original convention \cite{Fro59} and is, for example, the definition  used in \cite{Kim20}. As $\mathfrak{g}_{K/k}^+\geq \mathfrak{g}_{K/k}$, it is clear that an analogue of Theorem \ref{thm:zero_percent} holds for narrow genus numbers as well.

Analogously to Lemma \ref{lem:genus_formula}, we have the formula (see \cite[Thm.~4]{Gur77})
\begin{equation} \label{eqn:narrow_genus}
	\mathfrak{g}_{K/k}^+ =  \frac{h^+(k) \prod_{v \in \Val_k^{\textrm{f}}} \mathfrak{e}_v(K)}{[K:k][\O_k^{\times,+} : \O_k^{\times,+} \cap \Norm_{K/k} \prod_{w \in \Val_K} \O_{K,w}^\times ]}
\end{equation}
where $h^+(k)$ denotes the narrow class number of $k$, $\O_k^{\times,+}$ denotes the group of totally positive units of $k$, and $\Val_k^{\textrm{f}}$ denotes the set of finite places of $k$. 

The narrow genus number and the genus number only differ by a power of $2$. More precisely, $\mathfrak{g}_{K/k}^+/\mathfrak{g}_{K/k}$ divides $2^{r_1}$ where $r_1$ is the number of real places of $k$. This can be seen by taking the quotient of \eqref{eqn:narrow_genus} by the expression in Lemma~\ref{lem:genus_formula} and recalling that $h^+(k)/h(k)=2^{r_1}/[\O_k^\times:\O_k^{\times,+}]$ (see~\cite[Exercise 3]{CF}, for example).
Moreover, for odd degree extensions of $\Q$ the genus number and the narrow genus number coincide,
see \cite[2.9 Proposition]{Fro83}. 

Using \eqref{eqn:narrow_genus} in place of Lemma \ref{lem:genus_formula} during the proof of Theorem \ref{thm:genus_constant}, one can obtain an asymptotic formula for the sum of $\mathfrak{g}_{K/k}^+$  with the same order of magnitude, but  a different leading constant.

More generally, Horie \cite{Horie83} has defined the genus field with respect
to an arbitrary modulus of $K$ and given a formula for the corresponding
generalisation of the genus number, see~\cite[Corollary of
Theorem~2]{Horie83}. For moduli induced from a fixed modulus of $k$, our
methods can also be used to give an asymptotic formula for the sum of these
generalised genus numbers, again having the same order of magnitude as in
Theorem~\ref{thm:genus_constant} but with a different leading constant.


\begin{thebibliography}{xx}
\bibitem{CF} 
J.W.S. Cassels, A. Fr\"{o}hlich. 
\emph{Algebraic Number Theory.} 
Second Edition. London Mathematical Society. 2010.

\bibitem{CL84}
H. Cohen, H. W. Lenstra, Jr., Heuristics on class groups of number fields,
\textit{Number theory, Noordwijkerhout 1983 (Noordwijkerhout, 1983)}, 33--62, Lecture Notes in Math., \textbf{1068}, Springer, Berlin, 1984.

\bibitem{DW88}
B. Datskovsky, D. J. Wright, Density of discriminants of cubic extensions. \textit{J. reine angew. Math.} \textbf{386} (1988), 116--138.

\bibitem{DH71}
H. Davenport, H. Heilbronn, On the density of discriminants of cubic fields. II. \textit{Proc. Roy. Soc. London Ser. A} \textbf{322} (1971), no. 1551, 405--420.

\bibitem{EPW17}
{J. Ellenberg, L. B. Pierce, M. M. Wood},
{On $\ell$-torsion in class groups of number fields}.
\emph{Algebra Number Theory} {\bf11} (2017), No. 8, 1739--1778.

\bibitem{HNP}
{C. Frei, D. Loughran, R. Newton}, 
{The Hasse norm principle for abelian extensions}. 
\emph{Amer. J. Math.} {\bf140} (2018), no. 6, 1639--1685.


\bibitem{HNP2}
{C. Frei, D. Loughran, R. Newton}, appendix by Y. Harpaz and O. Wittenberg,
Number fields with prescribed norms. 
\emph{Comment. Math. Helv.} {\bf97} (2022), 133--181.

\bibitem{FW17}
{C. Frei, M. Widmer},
{Average bounds for the $\ell$-torsion in class groups of cyclic extensions}.
\emph{Res. Number Theory} {\bf4}, 34 (2018). \texttt{https://doi.org/10.1007/s40993-018-0127-9}


\bibitem{Fro83}
{A. Fr\"{o}hlich},
{Central extensions, {G}alois groups, and ideal class groups of number fields},
{\emph{Contemporary Mathematics}, Vol. 24 (1983)},
{American Mathematical Society, Providence, RI}.

\bibitem{Fro59}
A. Fr\"{o}hlich, The genus field and genus group in finite number fields. I,
II. \textit{Mathematika} {\bf 6} (1959), 40--46, 142--146.

\bibitem{Fur67}
{Y. Furuta},
{The genus field and genus number in algebraic number fields}.
\emph{Nagoya Math. J.} {\bf29} (1967), 281--285. 

\bibitem{Gur77}
{S. J. Gurak}, Ideal-theoretic characterization of the relative genus field. \emph{J. reine angew. Math.} {\bf296} (1977), 119--124.

\bibitem{Horie83}
{M. Horie}, On the genus field in algebraic number fields. \emph{Tokyo J. Math.} {\bf 6}(2) (1983), 363--380.

\bibitem{Kim20}
{H. H. Kim}, Genus numbers of cyclic and dihedral extensions of prime degree. 
\emph{Acta Arith.} {\bf192}(3) (2020), 289--300.

\bibitem{MTT20}
K. J. McGown, F. Thorne, A. Tucker, 
Counting quintic fields with genus number one. 
\emph{Math. Res. Lett.}, to appear.

\bibitem{MT16}
K. J. McGown, A. Tucker, 
Statistics of genus numbers of cubic fields. 
\emph{Ann. Inst. Fourier}, Online first, 19 p.

\bibitem{NSW08}
{J. Neukirch, A. Schmidt, K. Wingberg}, 
\textit{Cohomology of Number Fields}.
Second edition. Grundlehren der Mathematischen Wissenschaften {\bf323}, Springer-Verlag, 2008.

\bibitem{Ser12}
{J.-P. Serre}, \emph{Lectures on $N_X(p)$}.
Chapman \& Hall/CRC Research Notes in Mathematics, 11. CRC Press, Boca Raton, FL, 2012.

\bibitem{Ten15}
{G. Tenenbaum}, \emph{Introduction to analytic and probabilistic number theory}. Third edition. Graduate Studies in Mathematics, 163. American Mathematical Society, Providence, RI, 2015. 

\bibitem{Woo10} 
{M.~M.~Wood}, 
{On the probabilities of local behaviors in abelian field extensions}.
{\em Compositio Math.} {\bf 146} (2010), no.~1, 102--128.
\end{thebibliography}
\end{document}